\documentclass[10pt,letterpaper,reqno]{amsart}

\usepackage[T1]{fontenc}
\usepackage[utf8]{inputenc}
\usepackage{lmodern}
\usepackage{amsmath,amssymb,mathtools}
\usepackage{xcolor}
\usepackage{hyperref}

\hypersetup{
	colorlinks=true,
	linkcolor=blue,
	citecolor=blue,
	urlcolor=blue,
	pdfauthor={Yufan Luo, Yiqi Xu},
	pdftitle={On the bounded-conductor finiteness conjecture in equal characteristic},
	pdfsubject={Equal-characteristic etale local systems with unbounded finite monodromy}
}

\newtheorem{theorem}{Theorem}[section]
\newtheorem{proposition}[theorem]{Proposition}
\newtheorem{lemma}[theorem]{Lemma}
\newtheorem{corollary}[theorem]{Corollary}
\newtheorem{conjecture}[theorem]{Conjecture}

\theoremstyle{definition}
\newtheorem{definition}[theorem]{Definition}
\theoremstyle{remark}
\newtheorem{remark}[theorem]{Remark}

\numberwithin{equation}{section}

\newcommand{\Fp}{\mathbf F_p}
\newcommand{\Fbar}{\overline{\mathbf F}_p}

\newcommand{\SL}{\operatorname{SL}}
\newcommand{\PSL}{\operatorname{PSL}}
\newcommand{\GL}{\operatorname{GL}}
\newcommand{\Gal}{\operatorname{Gal}}
\newcommand{\Ram}{\operatorname{Ram}}
\newcommand{\Frob}{\operatorname{Frob}_p}
\newcommand{\et}{\mathrm{\acute{e}t}}

\title[On the bounded-conductor finiteness conjecture]
{On the bounded-conductor finiteness conjecture in equal characteristic}

\author{Yufan Luo}
\address{Shanghai Institute for Mathematics and Interdisciplinary Sciences (SIMIS),
	Shanghai 200433, China}
\address{Research Institute of Intelligent Complex Systems, Fudan University,
	Shanghai 200433, China}
\email{yufanluo@hotmail.com}

\author{Yiqi Xu}
\address{Fakult{\"a}t 8 -- Mathematik und Physik,
	Universit{\"a}t Stuttgart,
	Stuttgart, Germany}
\email{xuyiqi1@gmail.com}

\date{\today}
\subjclass[2020]{Primary 11F80; Secondary 11R58}
\keywords{\'{e}tale fundamental group, global function fields, $D$-elliptic sheaf, Galois representations, ramification}

\begin{document}
	
	\begin{abstract}
		We investigate the equal-characteristic case of the Moon--Taguchi bounded-conductor finiteness conjecture for mod $p$ Galois representations over global function fields. We first establish a conditional finiteness theorem: for any global function field $K$ of characteristic $p$ and any integer $n \ge 1$, there are only finitely many isomorphism classes of continuous, semisimple, everywhere unramified, and geometric representations $\rho: G_K \to \mathrm{GL}_n(\overline{\mathbf{F}}_p)$ that admit an everywhere unramified characteristic-zero lift. Furthermore, we prove that the conjecture fails in general without this lifting hypothesis. Concretely, we construct infinitely many global function fields $K$ of characteristic $p$ admitting a continuous, surjective, absolutely irreducible, everywhere unramified, and geometric representation $\rho_r: G_K \twoheadrightarrow \mathrm{SL}_2(\mathbf{F}_{p^r})$ for each integer $r \ge 4$. As a corollary, we construct an everywhere unramified Galois extension \(L/K\), regular over \(\mathbf F_p\), with Galois group
		\[
		\mathrm{Gal}(L/K)\cong
		\prod_{r\geq4}\mathrm{PSL}_2(\mathbf F_{p^r}).
		\]
		Combined with known cross-characteristic finiteness theorems, this completely resolves the question posed by Moon and Taguchi in dimension two: such extensions exist over global function fields of characteristic \(p\), whereas they cannot exist over global function fields of characteristic different from \(p\).
	\end{abstract}
	
	\maketitle
	
	\tableofcontents
	
	\section{Introduction}
	\subsection{Bounded-conductor finiteness conjecture}
	Let \(p\) be a prime number and let \(\Fp\) denote the
	finite field of order \(p\). Fix an algebraic closure \(\Fbar\) of
	\(\Fp\), and endow it with the discrete topology. Motivated by Serre's modularity conjecture \cite[Sect.~3.2, pp.~195--196]{MR885783} and the Fontaine--Mazur finiteness conjectures for \(p\)-adic Galois representations (see \cite[Sect.~3, Conj.~2a, p.~195]{FontaineMazur97}), Khare \cite[Conj.~2.2]{MR1751924} and Moon \cite[Sect.~4, Prob., p.~163]{MR1782427} independently proposed the following finiteness conjecture for mod \(p\) Galois representations (see also \cite[Sect.~7, p.~497]{Taguchi17}):
	
	\begin{conjecture}[Bounded-conductor finiteness conjecture in the number field case]
		Let \(K\) be a number field, \(n\) be a positive integer and 	\(\mathfrak N\) be a nonzero ideal of the ring of integers of \(K\). Let \(G_{K}\) be the absolute Galois group of \(K\). Then there are only finitely many isomorphism classes of continuous semisimple representations 
		\[
		\rho:G_K\longrightarrow\GL_n(\Fbar)
		\]
		such that the prime-to-\(p\) Artin conductor \(\mathfrak{R}(\rho)\) of \(\rho\) is bounded by \(\mathfrak N\). 
	\end{conjecture}
	
	For \(K=\mathbf{Q}\) and \(n=2\), the conjecture for odd
	representations follows from Serre's modularity conjecture,
	proved by Khare and Wintenberger
	\cite[Thm.~9.1, p.~500]{MR2551763}, \cite[Sect.~10, pp.~581--584]{MR2551764},
	using Kisin's modularity lifting theorem
	\cite[Thm.~0.1, pp.~587--588]{Kisin09}.
	For the deduction of finiteness, see
	\cite[Sect.~4, Rem.~(4), p.~163]{MR1782427}.
	More generally, for totally real number fields $K$, the
	conjecture for two-dimensional totally odd representations follows from either of the
	mod \(p\) local--global compatibility conjectures of Buzzard, Diamond and Jarvis
	\cite[Conjs.~4.7 and~4.9, pp.~149--150; Cor.~4.14, p.~156]{MR2730374}. An analogue of this finiteness conjecture in the function field setting
	was subsequently formulated by Moon and Taguchi
	\cite[Sect.~4, pp.~2533--2534]{MoonTaguchi01}; see also the related moduli-theoretic discussion in \cite[Sect.~7, pp.~497--499]{Taguchi17}.
	
	\begin{conjecture}[Bounded-conductor finiteness conjecture in the function field case]\label{MTfinitenessconj}
		Let \(K\) be a global function field of characteristic \(p'\), with
		absolute Galois group \(G_K\).  Let \(n\) be a positive integer and let
		\(\mathfrak N\) be an effective \(\mathbf Q\)-divisor of \(K\).  Then there are only
		finitely many isomorphism classes of continuous semisimple representations
		\[
		\rho:G_K\longrightarrow\GL_n(\Fbar)
		\]
		whose Artin conductor divides \(\mathfrak N\) and which are \emph{geometric}; that
		is, the fixed field of \(\ker(\rho)\) contains no nontrivial constant field
		extension of \(K\).
	\end{conjecture}
	\begin{remark}
		\begin{enumerate}
			\item The geometric assumption cannot be removed. Indeed, consider the natural
			projection
			\(
			G_K\twoheadrightarrow
			\operatorname{Gal}(\overline{k_{0}}/k_{0})
			\simeq \widehat{\mathbf Z}
			\) where $k_{0}$ is the full constant subfield of $K$. For every integer $m$ prime to $p$, choose an element
			$\zeta_m\in \overline{\mathbf F}_p^\times$ of order $m$. Composing the above
			projection with the character $	\widehat{\mathbf Z}\longrightarrow
			\overline{\mathbf F}_p^\times,	1\longmapsto \zeta_m $, we obtain a continuous one-dimensional representation $	\chi_m:G_K\longrightarrow \overline{\mathbf F}_p^\times $ that is
			unramified at every place of $K$. Since the characters $\chi_{m}$ give infinitely many non-isomorphic one-dimensional representations with trivial Artin conductor, the statement is false for $n=1$ without the geometric condition.
			
			\item If $\overline{\mathbf F}_p$ is replaced by a finite field $\mathbf F_{p^r}$, then the finiteness statement follows immediately from the analogue of the Hermite--Minkowski
			theorem for global function fields. See \cite[Sect.~4, (a)--(b), p.~2534]{MoonTaguchi01}.
		\end{enumerate}
	\end{remark}
	
	Moon and Taguchi proved this conjecture under the additional assumption
	that the image of \(\rho\) is solvable
	\cite[Thm.~4(ii), p.~2533]{MoonTaguchi01}.  Furthermore, in the
	cross-characteristic case \(p\ne p'\), the first author proved the
	conjecture in arbitrary dimension when \(p\ne2\), and also in dimension
	\(n=2\) when \(p=2\); see \cite[Thm.~1.3, p.~2]{Luo26a}.  For related
	finiteness results for varieties over finite fields, see \cite[Thms.~1.3, 1.4 and~1.6, pp.~3--4]{Luo26b}.
	The present paper studies the equal-characteristic case \(p=p'\) of the
	conjecture when \(\mathfrak{N}=0\), namely, for representations with a
	trivial Artin conductor.
	
	\subsection{Main results}
	Let \(X\) be a smooth projective
	geometrically connected curve over a finite field $k$ of characteristic $p$, and write \(K\) for
	the function field of \(X\). The \emph{arithmetic and geometric fundamental
		groups} of \(X\) are \(\pi_1^{\et}(X)\) and
	\(\pi_1^{\et}(X_{\overline{k}})\), respectively. We choose geometric
	base points compatibly and omit them from the notation.
	There is an exact sequence of profinite groups
	\cite[Lem.~58.14.3]{Stacks}
	\begin{equation}\label{eq:fundamental-exact-sequence}
		1\longrightarrow \pi_1^{\et}(X_{\overline{k}})
		\longrightarrow \pi_1^{\et}(X)
		\longrightarrow \Gal(\overline{k}/k)
		\longrightarrow 1.
	\end{equation}
	\begingroup
	\emergencystretch=1em
	The canonical quotient
	\(
	G_K\twoheadrightarrow\pi_1^{\et}(X)
	\)
	identifies continuous representations of \(\pi_1^{\et}(X)\) with continuous
	representations of \(G_K\) that are unramified at every place of \(K\)
	\cite[Lem.~58.11.1 and Prop.~58.11.3]{Stacks}.
	Such representations have trivial Artin conductor, and any continuous
	representation \(\rho:\pi_1^{\et}(X)\longrightarrow\GL_n(\Fbar)\) has finite image.
	
	Such a representation is called \emph{geometric} in the sense of
	Conjecture~\ref{MTfinitenessconj} if it satisfies any of the following
	equivalent conditions:
	\begin{enumerate}
		\item[(i)]
		\[
		\rho\!\left(\pi_1^{\et}(X_{\overline{k}})\right)
		=
		\rho\!\left(\pi_1^{\et}(X)\right).
		\]
		\item[(ii)] The connected finite \'{e}tale cover of \(X\) corresponding
		to \(\ker(\rho)\) is geometrically connected over \(k\).
		\item[(iii)] The field \(k\) is algebraically closed in the function field
		of this cover; equivalently, that function field is regular over \(k\).
	\end{enumerate}
	
	In the rest of the paper, we will mainly use formulation (i) when
	referring to geometric representations.
	These equivalences follow from the Galois correspondence for finite
	\'{e}tale covers
	\cite[Thm.~58.6.2(1), (3) and Sect.~58.7]{Stacks}
	and the connectedness criteria for smooth proper curves over a perfect
	field \cite[Lems.~33.10.7 and~33.8.6]{Stacks}.
	
	Therefore, the following is precisely the trivial Artin conductor special case of the Moon--Taguchi conjecture \ref{MTfinitenessconj} in the
	equal characteristic case.
	\par\endgroup
	
	\begin{conjecture}\label{ass:naive}
		Let \(X\) be a smooth projective geometrically connected curve over a
		finite field \(k\) of characteristic \(p\). For every positive integer
		\(n\), there are only finitely many isomorphism classes of continuous
		semisimple geometric representations
		\[
		\rho:\pi_1^{\et}(X)\longrightarrow\GL_n(\Fbar).
		\]
	\end{conjecture}
	
	Our first main result establishes the conjectured finiteness statement under a characteristic-zero liftability hypothesis; see
	Definition~\ref{lifttocharzero} for the precise meaning of this condition.
	
	\begin{theorem}\label{conditionaltheorem}
		Let \(X\) be a smooth projective geometrically connected curve over a
		finite field \(k\) of characteristic \(p\). Then, for every positive
		integer \(n\), there are only finitely many isomorphism classes of
		continuous semisimple geometric representations
		\[
		\rho:\pi_1^{\et}(X)\longrightarrow\GL_n(\Fbar)
		\]
		which admit a lift to characteristic zero.
	\end{theorem}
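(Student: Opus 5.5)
The plan is to turn the characteristic-zero lift into a uniform bound on the order of the finite image $\rho(\pi_1^{\et}(X))$. Once that bound is in place, finiteness is formal. There are only finitely many connected finite étale covers of $X$ of bounded degree (the function field Hermite--Minkowski theorem, Remark~(2) after Conjecture~\ref{MTfinitenessconj}). A finite group of bounded order has only finitely many isomorphism classes of semisimple $n$-dimensional representations over $\Fbar$.

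I will assume that Definition~\ref{lifttocharzero} provides a representation $\tilde\rho:\pi_1^{\et}(X)\to\GL_n(\mathcal O)$, with $\mathcal O$ of characteristic zero, whose image $\tilde G$ is finite and reduces onto $G=\rho(\pi_1^{\et}(X))$. If the definition only gives a $p$-adic lift, the first task is to show its image is finite; I flag that separately at the end.

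\emph{Step 1 (Jordan).} Since $\tilde G\subset\GL_n(\mathcal O)\subset\GL_n(\mathbf C)$ is finite, Jordan's theorem gives a normal abelian subgroup $\tilde A\trianglelefteq\tilde G$ with $[\tilde G:\tilde A]\le J(n)$, where $J(n)$ depends only on $n$. This is exactly what fails for $\GL_n(\Fbar)$ itself, for instance for the groups $\SL_2(\mathbf F_{p^r})$. It explains why the lifting hypothesis should be the decisive input.

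\emph{Step 2 (transfer geometricity).} I first replace $\tilde\rho$ by a lift that is itself geometric, for example by restricting attention to the image of $\pi_1^{\et}(X_{\overline k})$. Then let $Y\to X$ be the connected étale cover corresponding to $\tilde\rho^{-1}(\tilde A)$. Its degree is at most $J(n)$, so $Y$ ranges over a finite set independent of $\rho$. Because $\tilde\rho$ is geometric, $Y$ is geometrically connected over $k$. Moreover $\pi_1^{\et}(Y_{\overline k})=\pi_1^{\et}(Y)\cap\pi_1^{\et}(X_{\overline k})$ surjects onto $\tilde A$.

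\emph{Step 3 (class field theory).} The abelian quotient $\tilde A$ of $\pi_1^{\et}(Y)$ factors through $\pi_1^{\et}(Y)^{\mathrm{ab}}$. By unramified geometric class field theory, the image of the geometric part $\pi_1^{\et}(Y_{\overline k})$ in $\pi_1^{\et}(Y)^{\mathrm{ab}}$ is the kernel of the degree map, which is the finite group $\operatorname{Pic}^0_Y(k)$. Hence $|\tilde A|\le|\operatorname{Pic}^0_Y(k)|$.

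Combining the three steps, $|G|\le|\tilde G|\le J(n)\cdot\max_Y|\operatorname{Pic}^0_Y(k)|$, where the maximum runs over the finitely many $Y$ of degree at most $J(n)$. This is the uniform bound needed above, and it includes the wild part as well.

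The main obstacle is the geometricity bookkeeping in Step 2. Geometricity of $\rho$ does not obviously pass to $\tilde\rho$, since $\tilde G$ could contain a constant-field part in the kernel of reduction. If I cannot simply take a geometric lift, the fallback is to work only with $\tilde\rho(\pi_1^{\et}(X_{\overline k}))$ and its intersection with the Frobenius action. If the definition allows a $p$-adic lift with infinite image, I would first try to show finiteness of the geometric image via the finite generation of $\pi_1^{\et}(X_{\overline k})$ together with Frobenius-equivariance. Failing that, I would apply a Jordan-type theorem for compact subgroups of $\GL_n(\overline{\mathbf Q}_p)$ to the image modulo its maximal normal pro-$p$ subgroup.
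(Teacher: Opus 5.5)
\textbf{There is a genuine gap: the proposal only handles lifts with finite image, which Definition~\ref{lifttocharzero} does not provide.} In that definition $\widetilde\rho$ is any continuous homomorphism into $\GL_n(\mathcal O_E)$ for the $p$-adic topology. Its image $\widetilde G$ is therefore a compact $p$-adic analytic group that may be infinite, and $E$ may depend on $\rho$. This case is where the difficulty lies, and none of your fallbacks reaches it.
\begin{enumerate}
\item Jordan's theorem has no analogue here. An open subgroup of $\SL_2(\mathcal O_E)$ is Zariski dense in $\SL_2$, so it has no abelian subgroup of finite index at all. Bounds in terms of $[E:\mathbf Q_p]$ are useless because $E$ varies with $\rho$.
\item Dividing by the maximal normal pro-$p$ subgroup is circular. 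That subgroup contains the kernel of reduction $\widetilde G\cap\bigl(1+\varpi_E\mathrm{M}_n(\mathcal O_E)\bigr)$, so the quotient is just $G/O_p(G)$. For $G=\SL_2(\mathbf F_{p^r})$ this is $G$ itself, which is exactly the kind of group you need to rule out.
\item The geometric image of a continuous $p$-adic representation of $\pi_1^{\et}(X)$ need not be finite. If $\operatorname{Jac}(X)$ has positive $p$-rank, a Frobenius eigen-quotient of $T_p\operatorname{Jac}(X)$ gives a representation of the shape $\begin{pmatrix}\psi&\ast\\0&1\end{pmatrix}$, with $\psi$ a character of $\pi_1^{\et}(\operatorname{Spec}k)$, whose geometric image is infinite. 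So finite generation plus Frobenius-equivariance cannot give finiteness.
\end{enumerate}

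The missing ingredient is a global finiteness theorem for $p$-adic representations. The paper semisimplifies the lift; Brauer--Nesbitt shows the semisimplified reduction is still $\rho$. It then applies Theorem~\ref{Abetheorem}, which rests on Abe's Langlands correspondence and Harder's finiteness results. Up to twists by characters of $\pi_1^{\et}(\operatorname{Spec}k)$, there are only finitely many irreducible $p$-adic representations of dimension at most $n$. Their reductions confine $\rho|_{\pi_1^{\et}(X_{\overline k})}$ to a finite set. Geometricity, in the form $\rho(F)\in\rho\bigl(\pi_1^{\et}(X_{\overline k})\bigr)$ for a Frobenius lift $F$, then leaves only finitely many possible residual twists.

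\textbf{Within the finite-image case your strategy works and is much more elementary than the paper's, but Steps~2--3 need repair.} You cannot in general make $\widetilde\rho$ geometric. Also, $|\widetilde A|\le|\operatorname{Pic}^0_Y(k)|$ fails as soon as $\widetilde\rho$ carries a constant-field twist of large order. Instead, let $k_Y$ be the constant field of $Y$ and bound only $\widetilde\rho\bigl(\pi_1^{\et}(Y)\cap\pi_1^{\et}(X_{\overline k})\bigr)$; class field theory bounds its order by $|\operatorname{Pic}^0_Y(k_Y)|$. This gives $|\widetilde\rho(\pi_1^{\et}(X_{\overline k}))|\le J(n)\max_Y|\operatorname{Pic}^0_Y(k_Y)|$. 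Reducing and using the geometricity of $\rho$ (not of $\widetilde\rho$) then bounds $|\rho(\pi_1^{\et}(X))|$, so your geometricity bookkeeping problem disappears. The result is finiteness for representations admitting a finite-image lift, which is strictly weaker than the theorem as stated.
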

	
	The liftability hypothesis holds whenever the image has order prime to \(p\)
	\cite[\S15.5, Prop.~43 and Exer.~15.9, pp.~128--129]{Serre77}. The proof of
	Theorem~\ref{conditionaltheorem} ultimately rests on Abe's Langlands
	correspondence for overconvergent \(F\)-isocrystals \cite[Thm.~4.2.2, pp.~1023--1024]{Abe18}.
	
	\begin{remark}
		For comparison, suppose that \(k\) has characteristic different from
		\(p\), while coefficients are still taken in \(\Fbar\), and that
		\(\rho\) is geometric and absolutely irreducible. By formulation (i),
		\(\rho\) and its restriction to \(\pi_1^{\et}(X_{\overline{k}})\)
		have the same image, so the restriction is absolutely irreducible.
		If \(p>2\) and \(p\nmid n\), then de Jong's lifting
		theorem, together with Gaitsgory's proof of de Jong's conjecture,
		implies that \(\rho\) admits a lift to characteristic zero; see
		\cite[Thm.~3.5 and Rem.~3.6(b), pp.~71--72]{deJong01} and
		\cite[Thm.~3.6, p.~166, and App.~A, pp.~181--184]{Gaitsgory07}.  By contrast, in the equal-characteristic setting
		\(\operatorname{char}(k)=p\) considered here, liftability is an additional condition.
	\end{remark}
	
	The liftability hypothesis cannot be removed: without it, the finiteness
	statement fails already in rank \(2\).
	Our second main result is the following theorem, which gives the first counterexample to the
	Moon--Taguchi bounded-conductor finiteness conjecture. 
	
	\begin{theorem}\label{thm:main}
		For every prime \(p\), there exists a smooth projective geometrically
		connected curve \(X\) over \(\Fp\) that admits, for every integer
		\(r\geq4\), a continuous absolutely irreducible geometric representation
		\[
		\pi_1^{\et}(X)\longrightarrow\GL_2(\Fbar)
		\]
		with image isomorphic to \(\SL_2(\mathbf F_{p^r})\). Moreover, there
		are infinitely many pairwise geometrically non-isomorphic choices for
		\(X\). In particular, Conjecture~\ref{ass:naive} and hence
		Conjecture~\ref{MTfinitenessconj} fail for \(n=2\).
	\end{theorem}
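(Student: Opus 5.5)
The plan is to realize the representations as torsion of a universal family of \(D\)-elliptic sheaves over a proper moduli curve, but restricted to a \emph{special fibre}. That fibre is a smooth projective curve over a finite field. Fix the base \(C=\mathbf P^1_{\Fp}\) with function field \(F=\Fp(T)\), a place \(\infty\), and \(A=\Fp[T]\). Fix a quaternion division algebra \(D/F\) that is split at \(\infty\) and ramified at a nonempty finite set \(\Ram\) of finite places. By Laumon--Rapoport--Stuhler, the moduli scheme \(X^D_{\mathcal I}\) of \(D\)-elliptic sheaves of rank \(2\) with a suitable level structure \(\mathcal I\), supported away from \(\Ram\), has two properties we need. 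It is a smooth surface, proper over \(C\smallsetminus(\Ram\cup\{\infty\}\cup\operatorname{supp}\mathcal I)\), via the characteristic morphism. Unlike the Drinfeld modular curve, it has no cusps, so its fibres are proper. Pick a closed point \(v\) of this base, preferably of degree \(1\) so that \(\kappa(v)=\Fp\), and let \(X\) be a geometrically connected component of the fibre \(X^D_{\mathcal I,v}\). Replacing \(\Fp\) by a finite extension and then Weil-restricting is not allowed, so I will choose \(\mathcal I\) to make the components defined over \(\kappa(v)\), for instance by using Čerednik--Drinfeld-type descriptions or by enlarging the level. Then \(X\) is a smooth projective geometrically connected curve over \(\Fp\).

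For each \(r\ge4\), choose a place \(P_r\) of \(A\) of degree \(r\) with \(P_r\notin\Ram\cup\{v\}\cup\operatorname{supp}\mathcal I\); such places exist by counting irreducible polynomials. The characteristic of the universal \(D\)-elliptic sheaf on \(X\) is \(v\neq P_r\). Hence its \(P_r\)-torsion is a finite étale group scheme over \(X\), locally free over \(D\otimes_A A/P_r\cong M_2(\mathbf F_{p^r})\). By Morita equivalence it yields a continuous representation
\[
\rho_{P_r}:\pi_1^{\et}(X)\longrightarrow \GL_2(\mathbf F_{p^r})\subset\GL_2(\Fbar)
\]
that is unramified everywhere, since \(X\) itself is proper and the cover is étale. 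Its determinant should be a character factoring through \(\Gal(\overline{\mathbf F}_p/\Fp)\). This is the function-field analogue of the Weil pairing: it is the torsion of the associated rank-\(1\) object via the reduced norm. I then take a character \(\chi:\Gal(\overline{\mathbf F}_p/\Fp)\to\Fbar^\times\) with \(\chi^2=\det\rho_{P_r}^{-1}\), and replace \(\rho_{P_r}\) by \(\rho_r=\rho_{P_r}\otimes\chi\). This twist does not change the restriction to \(\pi_1^{\et}(X_{\overline{\mathbf F}_p})\).

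The key step, and the expected main obstacle, is the geometric monodromy statement:
\[
\rho_{P_r}\bigl(\pi_1^{\et}(X_{\overline{\mathbf F}_p})\bigr)=\SL_2(\mathbf F_{p^r})\quad\text{for all } r\ge4.
\]
Granting this, \(\rho_r\) has determinant \(1\) and its image contains \(\SL_2(\mathbf F_{p^r})\), so the image is exactly \(\SL_2(\mathbf F_{p^r})\). The image of the geometric subgroup equals the full image, so \(\rho_r\) is geometric in sense (i). It is absolutely irreducible because \(\SL_2(\mathbf F_{p^r})\) acts absolutely irreducibly on its standard representation.

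For the monodromy statement I would argue as in Ribet's proof of irreducibility of Igusa-type covers. The geometric irreducibility of the \(P_r\)-level cover \(X^D_{\mathcal I P_r}\times_C v\) over \(X_{\overline{\mathbf F}_p}\), modulo the determinant, is equivalent to the surjectivity claim. I would prove that irreducibility by the following chain:
\begin{itemize}
\item use the \(p\)-adic-type uniformization at the place \(v\) or a Hecke-orbit argument, namely that the supersingular locus of the fibre is a single \(\overline D^\times\)-orbit, where \(\overline D\) is the quaternion algebra with invariants switched at \(v\) and \(\infty\);
\item combine this with strong approximation for the norm-one group \(\overline D^{1}\), which applies since \(\overline D\) is split at \(P_r\);
\item conclude that Hecke correspondences prime to \(P_r\) act transitively on the components of the \(P_r\)-level cover with fixed determinant.
\end{itemize}
A fallback route is a Pink-type open image theorem for the generic point of \(X\). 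Such a result gives the image as a large subgroup of \(\SL_2\) for almost all \(P\), but it may require excluding finitely many \(r\) and so fits less well with the uniform bound \(r\ge4\). The restriction \(r\ge4\) presumably reflects either the need for \(\PSL_2(\mathbf F_{p^r})\) to be simple and non-exceptional, or the choice of the auxiliary places in the construction.

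Finally, for the infinitude of pairwise geometrically non-isomorphic \(X\), vary the data \((D,\mathcal I,v)\). For example, enlarge \(\Ram\) or the level. The genus of the fibre curves then grows without bound by the Riemann--Hurwitz/volume formula for \(X^D\), so infinitely many of them are non-isomorphic over \(\overline{\mathbf F}_p\).
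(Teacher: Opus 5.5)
Your overall architecture is close to the paper's: a fibre of the $D$-elliptic moduli curve in a degree-one characteristic, an auxiliary prime level of degree $r$, geometric monodromy $\SL_2$, then a constant twist. However, the twist step fails as written. From $\det\rho_r=1$ and $\rho_r(\pi_1^{\et}(X))\supseteq\SL_2(\mathbf F_{p^r})$ you only get an image inside $\SL_2(\Fbar)$. Let $\gamma\in\pi_1^{\et}(X)$ lift $\Frob$, and put $a=\det\rho_{P_r}(\gamma)\in\mathbf F_{p^r}^\times$ and $b=\chi(\gamma)$, so that $b^2=a^{-1}$. If $a\notin(\mathbf F_{p^r}^\times)^2$, then $b\notin\mathbf F_{p^r}$, hence $b\,\rho_{P_r}(\gamma)\notin\GL_2(\mathbf F_{p^r})$. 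The arithmetic image is then an index-two overgroup of $\SL_2(\mathbf F_{p^r})$ inside $\SL_2(\mathbf F_{p^{2r}})$, so $\rho_r$ is not geometric and its image is not $\SL_2(\mathbf F_{p^r})$. No other constant twist repairs this: geometricity forces $\chi(\gamma)\rho_{P_r}(\gamma)\in\SL_2(\mathbf F_{p^r})$, which is exactly the condition $a\in(\mathbf F_{p^r}^\times)^2$. For odd $p$ this condition genuinely fails for many $P_r$. In the paper's normalization (characteristic $(T)$, auxiliary level $(T-1)$) one has $a=(T\bmod P_r)^{-1}$, which is a nonsquare whenever $P_r$ is the minimal polynomial of a generator of $\mathbf F_{p^r}^\times$. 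For $p=2$ every element of $\mathbf F_{2^r}^\times$ is a square, so only there is your step harmless. You are therefore missing two ingredients. The first is an explicit computation of the Frobenius determinant. The paper obtains it by pushing the level torsor along the determinant morphism to the rank-one moduli scheme, where Frobenius acts on level structures of the normalized sheaf $t_i=T$ by $(T\bmod I)^{-1}$ (Lemma~\ref{lem:rank-one} and Theorem~\ref{thm:level-components}(4)). The second is a choice of $P_r$, uniform in $r$, that makes this a square. The paper takes $P_r$ to be the minimal polynomial of $\beta^2$ for a generator $\beta$ of $\mathbf F_{p^r}^\times$: an order count shows $\beta^2$ still generates $\mathbf F_{p^r}$, and $r\ge4$ ensures $P_r$ avoids the bad points, all of degree at most $3$ (Lemma~\ref{lem:square-primes}). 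That, not simplicity of $\PSL_2$, is the source of the bound $r\ge4$.

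Two further steps also need correction. With full level structures, enlarging the level makes the fibre's components \emph{less} rational, not more. In characteristic $(T)$, say, the geometric components of the level-$I$ fibre form a torsor under $(A/I)^\times/\Fp^\times$, on which Frobenius translates by the class of $T^{-1}$ (Corollary~\ref{cor:modular-components}). A component is therefore defined over $\Fp$ only when $I$ has degree one, e.g.\ $I=(T-1)$, which makes this group trivial (Proposition~\ref{prop:base}). The same description, coming from the irreducible geometric fibres of the determinant morphism, also gives the geometric monodromy $\SL_2(k_\ell)$ directly as a component stabilizer (Corollary~\ref{cor:stabilizer}). No supersingular uniformization or strong approximation is needed. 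For infinitely many curves, varying $(D,\mathcal I,v)$ would require redoing connectedness and the square condition for each choice, and enlarging $\mathcal I$ destroys geometric connectedness over $\Fp$. The paper instead keeps one $X$ and, with $\Pi=\pi_1^{\et}(X_{\Fbar})$, passes to the \'etale covers $X_j\to X$ cut out by $\Pi^{\mathrm{ab}}/\lambda^j\Pi^{\mathrm{ab}}$ together with a section of $\pi_1^{\et}(X)\to\widehat{\mathbf Z}$. These covers contain $\overline{[\Pi,\Pi]}$, so they preserve every perfect quotient $\SL_2(\mathbf F_{p^r})$ simultaneously and remain geometrically connected. Riemann--Hurwitz together with $g(X)\ge2$ then makes their genera strictly increase (Proposition~\ref{prop:monodromy-towers}).
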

	
	The proof unfolds in the following four steps.
	
	First, we construct a smooth projective geometrically connected curve
	over \(\Fp\) as a fiber of a moduli space of \(D\)-elliptic sheaves,
	for a suitable quaternion division algebra \(D\) over \(\Fp(T)\).
	Adding an auxiliary prime level of degree \(r\) gives a finite
	\'{e}tale \(\GL_2(\mathbf F_{p^r})\)-torsor over this fixed curve,
	and hence a two-dimensional representation of its \'{e}tale fundamental
	group.
	
	Second, to determine the geometric image, we use the determinant
	morphism to the rank-one moduli space. Its geometrically irreducible
	fibers, together with its equivariance under the level-group actions,
	identify the geometric monodromy group with \(\SL_2(\mathbf F_{p^r})\);
	see Proposition~\ref{prop:modular-input} and
	Theorem~\ref{thm:level-components}.
	
	Third, we use a rank-one Frobenius calculation to choose, for every
	\(r\ge 4\), an auxiliary level whose Frobenius determinant is a square
	in \(\mathbf F_{p^r}\). It is worth noting that the common
	characteristic \(p\) of the curve and the representation coefficients
	lets us use this level residue field for the scalar twist as well;
	see Remark~\ref{rem:equal-characteristic} for further details.
	A suitable scalar twist by a character of the absolute Galois group of
	\(\Fp\) then makes the determinant character trivial without changing
	the geometric image. The arithmetic and geometric images are therefore
	both \(\SL_2(\mathbf F_{p^r})\), and the resulting representations are
	absolutely irreducible.
	
	Finally, Proposition~\ref{prop:monodromy-towers} supplies a tower of
	finite \'{e}tale covers that simultaneously preserves all these
	monodromy groups. The genera increase along the tower, giving infinitely
	many pairwise geometrically nonisomorphic curves as required in
	Theorem~\ref{thm:main}. The details of the construction and proof are
	given in Sections~3--6.
	
	Frey, Kani and V\"olklein \cite[Sect.~4.2, Cor.~4.10, pp.~93--95]{FKV99} constructed
	examples of global function fields admitting infinite
	everywhere unramified regular Galois extensions whose Galois
	groups are products of groups \(\mathrm{PSL}_d(\mathbf F_{p_{i}})\),
	with \(p_{i}\) ranging over an infinite set of primes. In \cite[the final paragraph of Sect.~3, p.~2533]{MoonTaguchi01}, Moon and Taguchi asked whether such an extension could instead be
	obtained using groups \(\mathrm{PSL}_d(\mathbf F_q)\), with
	\(q\) ranging over distinct powers of a fixed prime. Theorem~\ref{thm:main} answers this existence question for \(d=2\)
	in equal characteristic. Together with the cross-characteristic finiteness theorem
	\cite[Thm.~1.3, p.~2]{Luo26a}, this gives the following dichotomy.

	\begin{corollary}\label{cor:infinite-product}
		Let \(p\) be a prime number.
		\begin{enumerate}
			\item There exist a global function field \(K\) with full constant
			field \(\Fp\) and an infinite Galois extension \(L/K\), unramified
			at every place and regular over \(\Fp\), such that
			\[
			\Gal(L/K)\cong
			\prod_{r\geq4}\PSL_2(\mathbf F_{p^r})
			\]
			as profinite groups.
			
			\item Let \(p'\ne p\) be a prime number, let \(K\) be a global
			function field of characteristic \(p'\), let \(k_0\) be its full
			constant field, and let \(S\) be an infinite set of powers of \(p\).
			Then there is no infinite Galois extension \(L/K\), unramified at every
			place and regular over \(k_0\), such that
			\[
			\Gal(L/K)\cong
			\prod_{q\in S}\PSL_2(\mathbf F_q)
			\]
			as profinite groups.
		\end{enumerate}
		Here regularity over the full constant field means that this field is
		algebraically closed in \(L\).
	\end{corollary}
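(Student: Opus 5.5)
For part (1), we start from Theorem~\ref{thm:main}. It gives a curve \(X\) over \(\Fp\) with function field \(K\) and, for each \(r\ge 4\), a geometric representation \(\rho_r:\pi_1^{\et}(X)\to\GL_2(\Fbar)\) with image \(\SL_2(\mathbf F_{p^r})\). Compose each \(\rho_r\) with \(\SL_2(\mathbf F_{p^r})\twoheadrightarrow\PSL_2(\mathbf F_{p^r})\). This gives surjections \(\bar\rho_r:\pi_1^{\et}(X)\twoheadrightarrow \PSL_2(\mathbf F_{p^r})\), and since \(\rho_r\) is geometric, so is \(\bar\rho_r\). Let \(L_r\) be the fixed field of \(\ker\bar\rho_r\) inside a separable closure, and let \(L\) be the compositum of all the \(L_r\). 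Then \(L/K\) is unramified everywhere and Galois. Its Galois group is the image of the product map \(\pi_1^{\et}(X)\to\prod_{r\ge4}\PSL_2(\mathbf F_{p^r})\), which is closed by compactness.

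The key step is to show that this product map is surjective. By compactness it is enough to show surjectivity onto every finite subproduct \(\prod_{r\in F}\PSL_2(\mathbf F_{p^r})\). We use Goursat's lemma and induct on \(|F|\). For \(r\ge 2\) and \((p,r)\neq(2,2),(3,2)\) the groups \(\PSL_2(\mathbf F_{p^r})\) are nonabelian simple, and this certainly holds for \(r\ge4\). They are pairwise non-isomorphic for distinct \(r\), since their orders differ. Now let \(H\) be the image in the subproduct over \(F\), and let \(H'\) be its image over \(F\setminus\{r_0\}\), which is the full product by induction. The projection of \(H\) to the factor at \(r_0\) is also surjective. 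By Goursat, \(H\) is the graph of an isomorphism between a quotient of the product over \(F\setminus\{r_0\}\) and a quotient of the simple group \(\PSL_2(\mathbf F_{p^{r_0}})\). The only quotients of the latter are itself and the trivial group. Every simple quotient of a finite product of pairwise non-isomorphic nonabelian simple groups is one of the factors, and none of these factors is isomorphic to \(\PSL_2(\mathbf F_{p^{r_0}})\). So the common quotient is trivial and \(H\) is the full product.

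Regularity of \(L\) over \(\Fp\) comes from applying the same Goursat argument to the geometric fundamental group. Because each \(\bar\rho_r\) is geometric, each \(\bar\rho_r\) restricted to \(\pi_1^{\et}(X_{\overline{\mathbf F}_p})\) is still surjective, so \(\pi_1^{\et}(X_{\overline{\mathbf F}_p})\) also surjects onto the whole product. This is formulation (i) applied to \(L\), and it says \(L\cap \overline{\mathbf F}_pK=K\), that is, \(\Fp\) is algebraically closed in \(L\). This is the step I expect to need the most care, because it is where geometricity has to pass from individual factors to the infinite product. Goursat handles it once it is applied to the geometric image.

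For part (2), suppose for contradiction that such an \(L\) exists. For each \(q\in S\) the projection gives an everywhere unramified, geometric surjection \(G_K\twoheadrightarrow\PSL_2(\mathbf F_q)\). Geometricity holds because regularity of \(L\) passes to its subfields. Pass to a finite extension \(K'\) of \(K\), chosen independently of \(q\) by enlarging constants within the bounded amount allowed by geometric considerations, or a bounded unramified cover. Over \(K'\) we want each of these to lift to a projective representation \(G_{K'}\to\SL_2(\overline{\mathbf F}_p)\) or \(\GL_2(\overline{\mathbf F}_p)\). The obstruction lies in an \(H^2\) with coefficients in \(\mu_2\) or in \(\overline{\mathbf F}_p^\times\), and in the function-field setting it can be killed by Tate's theorem that \(H^2(G_K,\mathbf Q/\mathbf Z)=0\). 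The lift may introduce ramification, but only with a uniformly bounded conductor, because it is a twist by a character whose conductor can be controlled uniformly. Taking semisimplifications, we obtain infinitely many pairwise non-isomorphic geometric two-dimensional mod \(p\) representations of bounded conductor over a fixed function field of characteristic \(p'\neq p\). The images are non-isomorphic because their projective images \(\PSL_2(\mathbf F_q)\) are distinct for distinct \(q\). This contradicts the cross-characteristic finiteness theorem \cite[Thm.~1.3, p.~2]{Luo26a}, whose \(n=2\) case holds for all \(p\). The main obstacle here is the uniform control of the conductor and of geometricity when lifting from \(\PSL_2\) to \(\GL_2\). If this route does not go through, the fallback is to invoke any \(\PSL\)/projective form of the result in \cite{Luo26a} directly.
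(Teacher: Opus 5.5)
Your part (1) matches the paper. The paper isolates the same Goursat-type statement as a lemma: pairwise nonisomorphic nonabelian simple quotients jointly surject onto every finite subproduct, hence onto the whole product by density and compactness. It then applies this to both \(\pi_1^{\et}(X)\) and \(\pi_1^{\et}(X_{\Fbar})\), exactly as you do. (A minor slip: \(\PSL_2(\mathbf F_4)\) and \(\PSL_2(\mathbf F_9)\) are simple, and the only exceptions are \(q=2,3\); this is irrelevant for \(r\ge4\).)

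Part (2), however, has a genuine gap. Your argument rests on producing, for each \(q\), a two-dimensional lift of \(\varphi_q\colon G_K\twoheadrightarrow\PSL_2(\mathbf F_q)\) that is \emph{geometric} and has conductor bounded \emph{independently of \(q\)}. Neither property is proved:
\begin{itemize}
\item the uniform conductor bound is only asserted;
\item the passage to \(K'\) is left unspecified, and it is unnecessary, since Tate's theorem already applies over \(K\);
\item geometricity of the lift is never checked;
\item the fallback to an unspecified projective form of \cite{Luo26a} is not an argument.
\end{itemize}
Tate's theorem alone only gives some lift \(\tilde\rho\). That lift may be ramified at places depending on \(q\), and its kernel may cut out constant extensions.

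The route can be completed, but it needs an actual argument:
\begin{itemize}
\item On each inertia group, \(\tilde\rho\) acts through a scalar character.
\item By class field theory, these local characters come from one global character, up to an obstruction on \(k_0^\times\).
\item That obstruction has order at most \(2\), because \(\det\tilde\rho\) is global, and it can be absorbed tamely at one fixed auxiliary place.
\item Any lift has image \(Z\cdot\SL_2(\mathbf F_q)\) with \(Z\) scalar, and its geometric image contains \(\SL_2(\mathbf F_q)\) by perfectness.
\item A further twist by a character of \(\Gal(\overline{k_0}/k_0)\), which does not change ramification, then makes it geometric.
\end{itemize}

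The paper avoids lifting entirely, and this is the idea your proposal is missing. For odd \(p\), it composes \(\varphi_q\) with the conjugation action of \(\PSL_2(\mathbf F_q)\) on \(\mathrm M_2(\Fbar)\). This four-dimensional representation is faithful. Its semisimplification stays faithful, since otherwise \(\PSL_2(\mathbf F_q)\) would embed in a unitriangular, hence solvable, group. Consequently \(\ker\sigma_q=\ker\varphi_q\). So \(\sigma_q\) is automatically everywhere unramified, with conductor zero, and geometric, because its fixed field is the regular subfield \(L_q\subset L\). The contradiction then comes from \cite[Thm.~1.3]{Luo26a} in dimension \(4\), which is available because \(p\ne2\). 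For \(p=2\), one has \(\PSL_2(\mathbf F_q)=\SL_2(\mathbf F_q)\), and the natural representation is used directly in dimension \(2\). Giving up dimension \(2\) in favour of the arbitrary-dimension case of the finiteness theorem is what makes conductor and geometricity control free.
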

	
	\begin{remark}
		The construction is not intrinsically restricted to rank two.
		We expect higher-rank analogues of Theorem~\ref{thm:main} and
		Corollary~\ref{cor:infinite-product}(1) for every fixed \mbox{\(n\geq 2\)},
		still over smooth projective geometrically connected curves.
		In higher rank, however, the relevant moduli spaces are
		higher-dimensional. One must establish the required level monodromy
		and then pass to a curve that preserves all these monodromy groups
		simultaneously, independently of the residue degree.
		
		There is also an arithmetic obstacle to extending the argument
		directly. Removing the determinant by a scalar twist over the
		original coefficient field requires an \(n\)-th-power condition,
		but such powers may all lie in a proper subfield, preventing the
		choice of auxiliary primes used here. Overcoming this obstruction
		while keeping one curve for all the required residue degrees calls
		for further arguments. These additional geometric and arithmetic
		steps are the reason we restrict the present paper to rank two.
		We expect them to be accessible within the same general approach
		and intend to develop the higher-rank construction in future work.
		
		For Corollary~\ref{cor:infinite-product}(2), the cross-characteristic
		finiteness argument extends to every fixed \(n\geq 2\) when \(p\)
		is odd. We also expect the higher-rank assertion in coefficient
		characteristic two, although this case requires a separate argument
		beyond the finiteness theorem used here.
	\end{remark}
	
	The two main theorems also imply that, on each curve supplied
	by Theorem~\ref{thm:main}, the resulting representations fail to admit
	characteristic-zero lifts for all sufficiently large residue degrees.
	
	\begin{corollary}\label{cor:nonliftable-family}
		Let \(p\) be a prime number. Let \(X\) and
		\((\rho_r)_{r\geq 4}\) be a curve over \(\Fp\) and the corresponding
		family of representations constructed in Theorem~\ref{thm:main}.
		Then there exists an integer \(r_0=r_0(X)\geq 4\) such that,
		for every \(r\geq r_0\), the representation \(\rho_r\)
		admits no lift to characteristic zero.
	\end{corollary}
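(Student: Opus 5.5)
The plan is to argue by contradiction, combining Theorem~\ref{conditionaltheorem} with the pairwise non-isomorphism of the family $(\rho_r)_{r\ge4}$ from Theorem~\ref{thm:main}. The curve $X$ is fixed and $n=2$. By Theorem~\ref{thm:main}, each $\rho_r$ is continuous and absolutely irreducible, hence semisimple, and it is geometric. So every member of the family is among the representations counted in Theorem~\ref{conditionaltheorem}, provided it admits a lift to characteristic zero.

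Let $\mathcal L\subseteq\{r\ge4\}$ be the set of $r$ for which $\rho_r$ admits a lift to characteristic zero. Then $\{\rho_r : r\in\mathcal L\}$ is a set of continuous semisimple geometric representations $\pi_1^{\et}(X)\to\GL_2(\Fbar)$ admitting lifts. By Theorem~\ref{conditionaltheorem}, this set meets only finitely many isomorphism classes.

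Next, the $\rho_r$ for distinct $r$ are pairwise non-isomorphic. Isomorphic representations into $\GL_2(\Fbar)$ are conjugate by an element of $\GL_2(\Fbar)$, so their images are isomorphic as abstract groups. For $r\ne r'$ we have $|\SL_2(\mathbf F_{p^r})|=p^r(p^{2r}-1)\neq p^{r'}(p^{2r'}-1)=|\SL_2(\mathbf F_{p^{r'}})|$, since this function is strictly increasing in $r$. Hence $\rho_r\not\cong\rho_{r'}$.

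It follows that each isomorphism class contains at most one $\rho_r$, so $\mathcal L$ is finite. Set $r_0=\max(\mathcal L\cup\{3\})+1$. Then $r_0\ge4$, and for every $r\ge r_0$ we have $r\notin\mathcal L$, so $\rho_r$ admits no lift to characteristic zero. The integer $r_0$ depends only on $X$ and the fixed family.

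There is essentially no obstacle. The only point needing care is that the notion of ``lift to characteristic zero'' in Definition~\ref{lifttocharzero} is exactly the hypothesis of Theorem~\ref{conditionaltheorem}, and that it is applied to $\rho_r$ viewed as a representation into $\GL_2(\Fbar)$ via $\SL_2(\mathbf F_{p^r})\subset\GL_2(\Fbar)$. Both hold by construction.
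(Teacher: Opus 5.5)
Your proof is correct and follows the paper's argument: Theorem~\ref{conditionaltheorem} on the fixed curve \(X\), combined with the strictly increasing orders \(\#\SL_2(\mathbf F_{p^r})\) of the images of the \(\rho_r\). The only difference is phrasing. The paper turns the finiteness into a uniform bound \(B_X\) on image orders of liftable representations, whereas you show that your set \(\mathcal L\) is finite via pairwise non-isomorphism; the two formulations are equivalent.
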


	\subsection{Organization and notation}
	The paper is organized as follows.  In Section~2, we prove
	Theorem~\ref{conditionaltheorem}.  Sections~3--6 prove
	Theorem~\ref{thm:main}, and
	Section~\ref{sec:infinite-product} proves
	Corollary~\ref{cor:infinite-product} and Corollary~\ref{cor:nonliftable-family}.
	
	Throughout this paper, \(p\) is a prime number, \(\Fp\) is the finite field of order \(p\) and \(\mathbf{Q}_{p}\) is the field of $p$-adic numbers. For any positive integer $m$, we denote by $\mathbf F_{p^m}$ the finite field of order $p^{m}$. If $F$ is a field, then we denote by $\overline{F}$ an algebraic closure of $F$. We use the \emph{arithmetic Frobenius}
	\[
	\Frob\in\Gal(\Fbar/\Fp),\qquad a\longmapsto a^p;
	\]
	the geometric Frobenius is \(\Frob^{-1}\).  Fundamental groups act on
	geometric fibers on the left, while torsor actions are written on the
	right.
	
	\section{Proof of Theorem \ref{conditionaltheorem}}
	Let \(X\) be a smooth projective geometrically connected curve over a finite field of characteristic $p$. We first recall the notion of characteristic-zero lifts.
	\begin{definition}\label{lifttocharzero}
		We say that a continuous representation
		\[
		\rho:\pi_1^{\et}(X)\longrightarrow\GL_n(\Fbar)
		\]
		admits \emph{a lift to characteristic zero} if there exist a finite extension
		$E/\mathbf Q_p$, with ring of integers $\mathcal O_E$ and residue field
		$\kappa_E$, an embedding $\kappa_E\hookrightarrow \Fbar$, and a continuous
		representation
		\[
		\widetilde\rho: \pi_1^{\et}(X)\longrightarrow \GL_n(\mathcal O_E)
		\]
		such that, after extension of scalars through $\kappa_E\hookrightarrow \Fbar$,
		the reduction of $\widetilde\rho$ modulo the maximal ideal of $\mathcal O_E$ is isomorphic to $\rho$. Here the group $\GL_{n}(\mathcal{O}_{E})$ is equipped with the $p$-adic topology induced from the topological ring $\mathcal{O}_{E}$.
	\end{definition}
	
	Our argument uses the following consequence of the Langlands correspondence.
	\begin{theorem}\label{Abetheorem}
		Let \(X\) be a smooth projective geometrically connected curve over a finite field $k$ of characteristic $p$ and let $n$ be a positive integer. Then there are only finitely many isomorphism classes of irreducible continuous representations
		\[
		\rho:\pi_1^{\et}(X)\to \mathrm{GL}_n(\overline{\mathbf Q}_{p})
		\]
		up to twist by a character of $\pi_1^{\et}(\operatorname{Spec}k)$.
	\end{theorem}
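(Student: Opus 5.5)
The plan is to pass from \(p\)-adic representations of \(\pi_1^{\et}(X)\) to overconvergent \(F\)-isocrystals, then to cuspidal automorphic representations, and to count the latter.

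\emph{Step 1 (Langlands correspondence).} Let \(\rho:\pi_1^{\et}(X)\to\GL_n(\overline{\mathbf Q}_p)\) be continuous and irreducible. Its image is compact, so after conjugation it lands in \(\GL_n(\mathcal O_E)\) for some finite \(E/\mathbf Q_p\). Such a representation corresponds to a unit-root convergent \(F\)-isocrystal on \(X\). Because \(X\) is proper, convergent and overconvergent isocrystals coincide, so this is an irreducible overconvergent \(F\)-isocrystal of rank \(n\) with \(\overline{\mathbf Q}_p\)-coefficients. Its determinant has finite order up to twist by a character of \(\pi_1^{\et}(\operatorname{Spec}k)\). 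After such a twist, Abe's theorem \cite[Thm.~4.2.2]{Abe18} attaches to it a cuspidal automorphic representation \(\pi\) of \(\GL_n(\mathbb A_K)\) with matching Frobenius eigenvalues at every place. Since the isocrystal is smooth on all of \(X\), \(\pi\) is unramified everywhere, that is, spherical at every place.

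\emph{Step 2 (finiteness on the automorphic side).} Cuspidal automorphic representations of \(\GL_n(\mathbb A_K)\) that are unramified everywhere and whose central character has finite order fall into finitely many classes up to twist by \(|\det|^{s}\), i.e.\ by unramified characters of \(k\). This is a standard fact: such \(\pi\) occur in the space of cusp forms on \(\GL_n(K)\backslash\GL_n(\mathbb A_K)/\GL_n(\mathcal O_{\mathbb A})\), twisted to fix the degree of the central character. By Harder's theorem, cusp forms are supported on a set that is compact modulo the centre. With level \(\GL_n(\widehat{\mathcal O})\) this set is finite modulo \(Z(\mathbb A)\cdot\)(finite index), so the relevant space of cusp forms of fixed central character is finite-dimensional. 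The central characters of finite order, unramified everywhere, are finite in number modulo twists by \(\pi_1(\operatorname{Spec}k)\), by class field theory and the finiteness of \(\operatorname{Pic}^0(X)(k)\).

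\emph{Step 3 (back to Galois).} Two irreducible \(\rho,\rho'\) whose automorphic counterparts are twists of one another have the same Frobenius traces up to a twist. By Chebotarev and Brauer--Nesbitt, together with irreducibility, \(\rho'\) is isomorphic to \(\rho\otimes\chi\) with \(\chi\) a character of \(\pi_1^{\et}(\operatorname{Spec}k)\). This yields the theorem.

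\emph{Main obstacle.} The delicate point is Step 1: Abe's correspondence is stated for isocrystals whose determinant has finite order, and one has to check that the determinant twist can always be absorbed by a character of \(\pi_1^{\et}(\operatorname{Spec}k)\). Here \(\det\rho\) is a \(p\)-adic character of \(\pi_1^{\et}(X)\). Its restriction to \(\pi_1^{\et}(X_{\overline k})^{\mathrm{ab}}\) factors through a finite quotient, because the geometric abelian fundamental group is an extension of a finitely generated \(\widehat{\mathbf Z}\)-module by a pro-\(p\) part, and the coinvariants under Frobenius are finite. So some power of \(\det\rho\) factors through \(\operatorname{Gal}(\overline k/k)\), and one can extract a root of it after enlarging \(E\). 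I would also check that Abe's theorem in this form covers \(\overline{\mathbf Q}_p\)-coefficients and does not require rational (algebraic) Frobenius eigenvalues; Abe's companion results supply that algebraicity.
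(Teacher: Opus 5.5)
Your proposal is correct and follows the paper's route: the paper passes through the Katz--Crew correspondence and then cites \cite[Cor.~3.6]{Kedlaya22}, whose proof is exactly your combination of Abe's Langlands correspondence with Harder's support theorem and finite-dimensionality result. Step~3 leaves one point implicit that the paper states: the constant twist relating two unit-root objects must itself be unit-root, so that it is a continuous character of \(\pi_1^{\et}(\operatorname{Spec}k)\); this follows by comparing determinants at Frobenius elements, since both determinants take \(p\)-adic unit values.
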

	\begin{proof}
		Since \(X\) is proper, the Katz--Crew correspondence identifies
		continuous \(p\)-adic representations of \(\pi_1^{\et}(X)\) with
		unit-root overconvergent \(F\)-isocrystals on \(X\) with coefficients
		in \(\overline{\mathbf Q}_p\); see
		\cite[Prop.~2.13, p.~8]{Kedlaya22}. If two unit-root objects differ by a constant rank-one twist, then that twist has slope zero and is therefore unit-root. Under this correspondence, such twists correspond to continuous
		characters of \(\pi_1^{\et}(\operatorname{Spec}k)\). The assertion is
		\cite[Cor.~3.6, p.~13]{Kedlaya22}.  Its proof combines the Langlands
		correspondence for coefficient objects \cite[Thm.~3.4, p.~13]{Kedlaya22}, whose
		crystalline case is \cite[Thm.~4.2.2, pp.~1023--1024]{Abe18}, with Harder's
		support theorem and finite-dimensionality corollary
		\cite[Thm.~1.2.1 and Cor.~1.2.3, pp.~255--256]{Har74}.
	\end{proof}
	
	We now prove Theorem~\ref{conditionaltheorem}.
	\begin{proof}[Proof of Theorem \ref{conditionaltheorem}]
		Let
		\(
		\rho:\pi_1^{\et}(X)\longrightarrow\GL_n(\Fbar)
		\)
		be a continuous semisimple geometric representation admitting a lift
		to characteristic zero. Choose such a lift
		\(
		\widetilde{\rho}:\pi_1^{\et}(X)\longrightarrow\GL_n(\mathcal O_E)
		\). Let \(V\) denote the semisimplification of
		\(
		\widetilde{\rho}\otimes_{\mathcal O_E}E.
		\)
		Then \(V\) is a semisimple \(E\)-representation of \(\pi_1^{\et}(X)\).
		Reducing any \(\pi_1^{\et}(X)\)-stable lattice in \(V\) modulo the
		maximal ideal of \(\mathcal O_E\) and then semisimplifying gives
		a representation with the same characteristic polynomials as \(\rho\).
		Both reductions factor through a common finite quotient of
		\(\pi_1^{\et}(X)\), so the Brauer--Nesbitt theorem identifies them
		\cite[\S18.2, Cor.~1 and Exer.~18.1, pp.~149--150]{Serre77}.
		After enlarging \(E\) if necessary, write
		\[
		V=\bigoplus_i V_i
		\]
		as a direct sum of absolutely irreducible \(E\)-representations. By Theorem \ref{Abetheorem}, there exists a finite set \(\mathcal S\) of irreducible
		\(p\)-adic representations of \(\pi_1^{\et}(X)\), of dimensions at most \(n\),
		such that every \(V_i\) is of the form
		\[
		W\otimes\chi,
		\]
		where \(W\in\mathcal S\) and
		\(
		\chi:\pi_1^{\et}(\operatorname{Spec}k)
		\longrightarrow\overline{\mathbf Q}_p^\times
		\)
		is a continuous character.
		
		For each \(W\in\mathcal S\), choose a stable lattice and let
		\(\overline W^{\,\mathrm{ss}}\) denote its semisimplified
		residual representation.
		By continuity, each \(\chi\) takes values in units, so reducing the
		corresponding tensor-product lattices and semisimplifying expresses
		\(\rho\) as a direct sum of constant twists of the fixed
		\(\overline W^{\,\mathrm{ss}}\).
		Since \(\mathcal S\) is finite, only finitely many irreducible mod \(p\)
		representations occur as irreducible constituents of the various
		\(\overline W^{\,\mathrm{ss}}\). Consequently, there are only finitely many
		possibilities for the restriction
		\(
		\rho|_{\pi_1^{\et}(X_{\overline{k}})},
		\)
		because twists by characters of
		\(\pi_1^{\et}(\operatorname{Spec}k)\) are trivial on \(\pi_1^{\et}(X_{\overline{k}})\).
		It follows that the finite groups
		\(
		\rho(\pi_1^{\et}(X_{\overline{k}}))
		\)
		belong, up to conjugacy, to a finite collection.
		
		Now fix the reductions of the \(W\)'s occurring in \(\rho\). We may write
		\[
		\rho\simeq
		\bigoplus_i\left(R_i\otimes\overline{\chi}_i\right),
		\]
		where the \(R_i\)'s belong to a fixed finite collection and each
		\(\overline{\chi}_i\) factors through
		\(
		\pi_1^{\et}(\operatorname{Spec}k)\simeq\widehat{\mathbf Z}.
		\)
		Let \(F\in\pi_1^{\et}(X)\) be an arithmetic Frobenius and put
		\[
		A_i=R_i(F),
		\qquad
		c_i=\overline{\chi}_i(F).
		\]
		Then
		\[
		\rho(F)=\operatorname{diag}(c_1A_1,\ldots,c_rA_r).
		\]
		Since
		\[
		\rho(\pi_1^{\et}(X_{\overline{k}}))=\rho(\pi_1^{\et}(X)),
		\]
		we have
		\(
		\rho(F)\in\rho(\pi_1^{\et}(X_{\overline{k}}))
		\). For fixed \(R_1,\ldots,R_r\), the group on the right is fixed and finite, and
		the matrix \(\rho(F)\) uniquely determines \((c_1,\ldots,c_r)\). Hence only
		finitely many such tuples can occur. Finally, each character
		\(
		\overline{\chi}_i:
		\pi_1^{\et}(\operatorname{Spec}k)
		\longrightarrow\Fbar^\times
		\)
		is uniquely determined by its value on Frobenius. Thus only finitely many
		tuples \((\overline{\chi}_1,\ldots,\overline{\chi}_r)\) occur, and hence only
		finitely many isomorphism classes of \(\rho\) occur.
	\end{proof}
	
	\section{Components, monodromy, and scalar twists}
	
	The representations in Theorem~\ref{thm:main} will come from finite
	\'{e}tale level covers.  To turn such a cover into a geometric
	representation, we first read its geometric
	monodromy from its connected components and then remove the determinant
	contributed by arithmetic Frobenius.  The last result of the section shows
	how to preserve all these monodromy groups while replacing one base curve
	by infinitely many finite \'{e}tale covers.
	
	To determine the geometric image in Theorem~\ref{thm:level-components},
	we will compute the stabilizer of a connected component of the covering
	curve after base change to \(\Fbar\). The following lemma justifies
	this approach.
	
	\begin{lemma}\label{lem:components}
		Let \(X\) be a smooth geometrically connected curve over \(\Fp\), let \(G\) be a finite constant
		group, and let \(Y\to X\) be a finite \'{e}tale right \(G\)-torsor.  Choose a
		geometric point \(\bar x\to X\), a point \(\bar y\in Y_{\bar x}\), and let
		\[
		\theta:\pi_1^{\et}(X)\longrightarrow G
		\]
		be the monodromy homomorphism defined by
		\(\gamma(\bar y)=\bar y\,\theta(\gamma)\).
		Put \(H=\theta(\pi_1^{\et}(X_{\Fbar}))\).  Then the geometric components of \(Y\) form
		the right \(G\)-set \(H\backslash G\).  The stabilizer in \(G\) of the
		component containing \(\bar y\) is exactly \(H\).
	\end{lemma}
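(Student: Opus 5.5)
The plan is to use the Galois correspondence for finite étale covers: the geometric fiber functor identifies finite étale covers of \(X_{\Fbar}\) with finite \(\pi_1^{\et}(X_{\Fbar})\)-sets. Connected components then correspond to orbits. Write \(Y_{\bar x}\) for the geometric fiber of \(Y\) over \(\bar x\). Since \(Y\to X\) is a right \(G\)-torsor, the map \(G\to Y_{\bar x}\), \(g\mapsto \bar y\,g\), is a bijection, and it is equivariant for right multiplication on \(G\). The left action of \(\pi_1^{\et}(X)\) on \(Y_{\bar x}\) commutes with the right \(G\)-action, because \(G\) acts by automorphisms of the cover. Hence
\[
\gamma(\bar y\,g)=\gamma(\bar y)\,g=\bar y\,\theta(\gamma)g .
\]
Under the bijection, \(\gamma\) therefore acts on \(G\) by left multiplication by \(\theta(\gamma)\). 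As a byproduct, this shows that \(\theta\) is a homomorphism for the stated conventions.

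Next, pass to \(X_{\Fbar}\), which is connected because \(X\) is geometrically connected. The cover \(Y_{\Fbar}\to X_{\Fbar}\) is finite étale with the same geometric fiber \(Y_{\bar x}\) (choosing \(\bar x\) over \(X_{\Fbar}\)). Its fundamental group \(\pi_1^{\et}(X_{\Fbar})\) acts through the inclusion into \(\pi_1^{\et}(X)\), compatibly with the exact sequence~\eqref{eq:fundamental-exact-sequence}. By \cite[Sect.~58.7]{Stacks}, the connected components of \(Y_{\Fbar}\) are in bijection with the \(\pi_1^{\et}(X_{\Fbar})\)-orbits on \(Y_{\bar x}\). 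By the previous paragraph these orbits are the orbits of \(H\) acting on \(G\) by left multiplication, namely the right cosets \(Hg\). The set of geometric components is thus \(H\backslash G\).

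The right \(G\)-action on components is induced from right multiplication on \(Y_{\bar x}\). Under the identification above it is the natural right action of \(G\) on \(H\backslash G\). The component containing \(\bar y\) corresponds to the coset \(H\cdot 1\). Its stabilizer is \(\{g: Hg=H\}=H\), which gives the last claim.

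The main obstacle is bookkeeping. One must check that the orbit–component correspondence for \(Y_{\Fbar}\) over \(X_{\Fbar}\) is compatible with restricting the \(\pi_1^{\et}(X)\)-action along \(\pi_1^{\et}(X_{\Fbar})\to\pi_1^{\et}(X)\); this uses compatible base points. One must also check that the left/right conventions really give left cosets of \(H\) acted on from the left, so that the quotient is \(H\backslash G\) rather than \(G/H\). Both checks are routine once the commutation of the two actions is written out.
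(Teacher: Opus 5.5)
Your proposal is correct and follows essentially the same argument as the paper. Both identify $Y_{\bar x}$ with $G$ via $g\mapsto\bar y\,g$, note that the commuting $\pi_1^{\et}(X)$-action becomes left multiplication through $\theta$, and apply the orbit--component correspondence over the connected base $X_{\Fbar}$ to get $\pi_0(Y_{\Fbar})=H\backslash G$, in which the coset $H$ has stabilizer $H$. (One wording slip: in your last paragraph, the orbits $Hg$ are the right cosets you correctly named earlier, not ``left cosets acted on from the left.'')
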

	
	\begin{proof}
		Identify \(Y_{\bar x}\) with the regular right \(G\)-set \(G\) by
		\(g\mapsto\bar y g\).  The fundamental group action commutes with the
		right \(G\)-action and is therefore left multiplication through \(\theta\).
		The \(\pi_1^{\et}(X_{\Fbar})\)-orbits in \(Y_{\bar x}\) are consequently the
		subsets \(Hg\). Applying the finite \'etale covering correspondence
		to \(Y_{\Fbar}\to X_{\Fbar}\), whose base is connected, identifies
		these orbits with the connected components of \(Y_{\Fbar}\)
		\cite[Thm.~58.6.2(1)]{Stacks}.
		Thus \(\pi_0(Y_{\Fbar})=H\backslash G\) as right \(G\)-sets.
		The component containing \(\bar y\) corresponds to the coset \(H\),
		and \(Hg=H\) holds exactly when \(g\in H\).
	\end{proof}
	
	In the modular torsors considered below, the component set is itself
	a torsor under a quotient of the acting group \(G\). The preceding
	lemma then identifies the geometric monodromy with the kernel of
	the corresponding quotient homomorphism.
	
	\begin{corollary}\label{cor:stabilizer}
		In the situation of Lemma~\ref{lem:components}, suppose that
		\(\pi_0(Y_{\Fbar})\) is a torsor under a finite group \(Q\), and that the
		right \(G\)-action on components is translation through a surjection
		\(\nu:G\to Q\).  Then
		\[
		\theta\bigl(\pi_1^{\et}(X_{\Fbar})\bigr)=\ker(\nu).
		\]
	\end{corollary}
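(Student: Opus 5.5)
The plan is to read off the stabilizer of a component in two ways and compare. By Lemma~\ref{lem:components}, the stabilizer in \(G\) of the geometric component \(C_0\) of \(Y\) containing \(\bar y\) is exactly
\[
H=\theta\bigl(\pi_1^{\et}(X_{\Fbar})\bigr).
\]
So it suffices to show that this same stabilizer equals \(\ker(\nu)\).

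Under the hypothesis of Corollary~\ref{cor:stabilizer}, the right \(G\)-action on \(\pi_0(Y_{\Fbar})\) is given by \(C\cdot g=C\cdot\nu(g)\). Here the right-hand side is the action of \(Q\) on its torsor \(\pi_0(Y_{\Fbar})\). Since a torsor action is free, \(C_0\cdot q=C_0\) holds if and only if \(q=1\). Hence \(g\) stabilizes \(C_0\) if and only if \(\nu(g)=1\), so the stabilizer of \(C_0\) is \(\ker(\nu)\). Combining this with the first paragraph gives \(H=\ker(\nu)\).

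As a consistency check, Lemma~\ref{lem:components} identifies \(\pi_0(Y_{\Fbar})\) with \(H\backslash G\). The torsor hypothesis identifies it with \(Q\cong G/\ker(\nu)\) as a right \(G\)-set, and both descriptions have the same cardinality. The argument does not need this, since the stabilizer of a single point already determines \(H\).

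There is no real obstacle. The only point to keep straight is that the \(G\)-action on components in the hypothesis is the same action as the one induced from the torsor structure on \(Y\), which is what Lemma~\ref{lem:components} uses. Surjectivity of \(\nu\) is not needed for this identification; it only guarantees that \(G\) acts transitively on the components.
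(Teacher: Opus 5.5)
Your proposal is correct and is essentially the paper's own argument: the paper also uses freeness of the translation action to see that every component has stabilizer \(\ker(\nu)\), and then invokes Lemma~\ref{lem:components} to identify that same stabilizer with \(\theta\bigl(\pi_1^{\et}(X_{\Fbar})\bigr)\). Your side remark is also right: surjectivity of \(\nu\) is not needed for the equality itself.
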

	
	\begin{proof}
		Translation makes the stabilizer of every point of the \(Q\)-torsor equal
		to \(\ker(\nu)\).  Lemma~\ref{lem:components} identifies the same stabilizer
		with the geometric monodromy image.
	\end{proof}
	
	This criterion will identify the geometric image with \(\SL_2(k)\).  The
	arithmetic image may still be larger because arithmetic Frobenius need not
	have determinant one.  The next lemma shows that a square Frobenius
	determinant can be removed by a character of the constant field.
	
	\begin{lemma}\label{lem:twist}
		Let \(X\) be a smooth geometrically connected curve over \(\Fp\), let
		\(k\) be a finite field of characteristic $p$, and let
		\(
		\theta:\pi_1^{\et}(X)\longrightarrow\GL_2(k)
		\)
		be a continuous representation such that 
		\(
		\theta\bigl(\pi_1^{\et}(X_{\Fbar})\bigr)=\SL_2(k)
		\).
		Suppose that
		\[
		\det\theta(\Frob)\in(k^\times)^2,
		\]
		where a lift of \(\Frob\) to \(\pi_1^{\et}(X)\) is understood. Then there is a
		continuous character
		\(
		\chi:\pi_1^{\et}(X)\longrightarrow k^\times
		\)
		which is trivial on \(\pi_1^{\et}(X_{\Fbar})\) and for which
		\[
		\rho(\gamma):=\chi(\gamma)\theta(\gamma)
		\]
		satisfies
		\[
		\rho\bigl(\pi_1^{\et}(X_{\Fbar})\bigr)
		=
		\rho\bigl(\pi_1^{\et}(X)\bigr)
		=
		\SL_2(k).
		\]
	\end{lemma}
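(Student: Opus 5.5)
The plan is to build \(\chi\) by pulling back a character of \(\Gal(\Fbar/\Fp)\simeq\widehat{\mathbf Z}\) along the surjection \(\pi_1^{\et}(X)\to\Gal(\Fbar/\Fp)\) from \eqref{eq:fundamental-exact-sequence}, whose kernel is \(\pi_1^{\et}(X_{\Fbar})\). Write \(d=\det\theta(F)\) for a fixed lift \(F\) of \(\Frob\). By hypothesis \(d=c^2\) for some \(c\in k^\times\). Since \(\widehat{\mathbf Z}\) is topologically generated by \(\Frob\) and \(k^\times\) is finite, there is a unique continuous character \(\psi:\Gal(\Fbar/\Fp)\to k^\times\) with \(\psi(\Frob)=c^{-1}\). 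Explicitly, \(\psi\) factors through \(\mathbf Z/m\), where \(m\) is the order of \(c\), and sends \(1\mapsto c^{-1}\). Let \(\chi\) be the composite of the projection with \(\psi\). Then \(\chi\) is continuous and trivial on \(\pi_1^{\et}(X_{\Fbar})\).

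Next I would check the images. On \(\pi_1^{\et}(X_{\Fbar})\) we have \(\rho=\theta\), so \(\rho(\pi_1^{\et}(X_{\Fbar}))=\SL_2(k)\). For the full group, \(\det\rho(\gamma)=\chi(\gamma)^2\det\theta(\gamma)\), so \(\det\rho\) is a continuous homomorphism to \(k^\times\).

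\begin{itemize}
\item It is trivial on \(\pi_1^{\et}(X_{\Fbar})\), because \(\chi\) is trivial there and \(\theta\) lands in \(\SL_2(k)\).
\item It is trivial on \(F\), since \(\det\rho(F)=c^{-2}d=1\).
\end{itemize}

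The closed subgroup generated by \(\pi_1^{\et}(X_{\Fbar})\) and \(F\) is all of \(\pi_1^{\et}(X)\). Indeed, its image in \(\widehat{\mathbf Z}\) is closed and contains \(1\), so it is everything, and it contains the kernel. By continuity, \(\det\rho\) is therefore trivial. Hence \(\rho(\pi_1^{\et}(X))\subseteq\SL_2(k)=\rho(\pi_1^{\et}(X_{\Fbar}))\subseteq\rho(\pi_1^{\et}(X))\), which gives equality throughout.

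The argument contains no real obstacle. The only points needing care are the continuity of \(\psi\) and the density argument, namely that \(F\) together with the geometric subgroup topologically generates \(\pi_1^{\et}(X)\). Independence of the choice of lift \(F\) is automatic, since two lifts differ by an element of \(\pi_1^{\et}(X_{\Fbar})\) and \(\theta\) of such an element has determinant \(1\).
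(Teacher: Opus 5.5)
Your proposal is correct and follows essentially the paper's own argument. Both pull back from $\Gal(\Fbar/\Fp)\simeq\widehat{\mathbf Z}$ the character sending $\Frob$ to a square root of $\det\theta(\Frob)^{-1}$, check that $\det\rho$ is trivial, and then sandwich $\rho(\pi_1^{\et}(X))$ between $\SL_2(k)=\rho(\pi_1^{\et}(X_{\Fbar}))$ and itself. The only cosmetic difference is that you show $\det\rho$ is trivial by noting that $\pi_1^{\et}(X_{\Fbar})$ and $F$ topologically generate $\pi_1^{\et}(X)$, whereas the paper first factors $\det\theta$ through $\Gal(\Fbar/\Fp)$ and uses that $\Frob$ topologically generates it.
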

	
	\begin{proof}
		Set \(\delta=\det\theta\).  Since \(\delta\) is trivial on
		\(\pi_1^{\et}(X_{\Fbar})\), the exact sequence
		\eqref{eq:fundamental-exact-sequence} makes it a continuous character of
		\(\Gal(\Fbar/\Fp)\cong\widehat{\mathbf Z}\).  Put
		\(a=\delta(\Frob)\), and choose \(b\in k^\times\) with \(b^2=a^{-1}\).
		Because \(b\) has finite order, the homomorphism
		\[
		\widehat{\mathbf Z}\longrightarrow k^\times,
		\qquad 1\longmapsto b,
		\]
		is continuous.  Pull it back to \(\pi_1^{\et}(X)\) and call the resulting
		character \(\chi\).  The character \(\chi^2\delta\) is trivial on the
		topological generator \(\Frob\), hence is trivial.  Therefore
		\(\det\rho=1\).
		
		Since the character \(\chi\) is trivial on \(\pi_1^{\et}(X_{\Fbar})\), the geometric image of \(\rho\) is \(\SL_2(k)\). The full image is contained in
		\(\SL_2(k)\) because its determinant is one, and it contains the geometric image.  Both images are therefore equal to \(\SL_2(k)\).
	\end{proof}
	
	After the twist, the resulting representation is the natural
	two-dimensional representation of \(\SL_2(k)\).  The main results require it
	to be absolutely irreducible, which follows from the following elementary
	fact.
	
	\begin{lemma}\label{lem:natural-irred}
		For every finite field \(k\) of characteristic $p$, the natural representation of
		\(\SL_2(k)\) on the two-dimensional vector space \(k^2\) is absolutely irreducible.
	\end{lemma}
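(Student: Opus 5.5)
To prove Lemma~\ref{lem:natural-irred}, I will pass to an algebraic closure \(\overline{k}\) and show that \(\overline{k}^2\) has no \(\SL_2(k)\)-stable line. A proper nonzero invariant subspace of a two-dimensional space is a line, so this is exactly absolute irreducibility.

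Suppose a line \(L=\overline{k}\,v\) is stable, with \(v=(x,y)^{T}\neq 0\). I will use two unipotent elements of \(\SL_2(k)\),
\[
u=\begin{pmatrix}1&1\\0&1\end{pmatrix},\qquad
w=\begin{pmatrix}1&0\\1&1\end{pmatrix}.
\]
Both are unipotent, so the only eigenvalue either can have on the line \(L\) is \(1\). Hence \(uv=v\) and \(wv=v\).

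The equation \(uv=v\) reads \((x+y,y)=(x,y)\), which forces \(y=0\). The equation \(wv=v\) reads \((x,x+y)=(x,y)\), which forces \(x=0\). Together these give \(v=0\), a contradiction. So there is no stable line, and the representation is absolutely irreducible.

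This argument works uniformly for every finite field \(k\), including \(\mathbf F_2\), since \(u\) and \(w\) have entries in the prime field. There is no real obstacle. The only point to check is the eigenvalue step: an element of \(\SL_2(k)\) stabilizing \(L\) acts on it by a scalar \(\lambda\), and \(\lambda\) must be an eigenvalue of the matrix. Since \((u-1)^2=0\), that eigenvalue is \(1\), and the same holds for \(w\).
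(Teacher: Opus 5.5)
Your proof is correct and is essentially the paper's argument. The paper also extends scalars, uses the same two unipotent matrices, and notes that an invariant line would have to lie in both \(\ker(u_+-1)=\overline{k}\,e_1\) and \(\ker(u_--1)=\overline{k}\,e_2\), which is impossible; your explicit computations \(y=0\) and \(x=0\) are the same step written out in coordinates.
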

	
	\begin{proof}
		Extend scalars to \(\Fbar\).  The two elements
		\[
		u_+=
		\begin{pmatrix}1&1\\0&1\end{pmatrix},
		\qquad
		u_-=
		\begin{pmatrix}1&0\\1&1\end{pmatrix}
		\]
		belong to \(\SL_2(k)\).  If a line is invariant under \(u_+\), then the
		restriction of \(u_+\) to that line has eigenvalue \(1\), so the line is
		contained in \(\ker(u_+-1)=\Fbar e_1\).  Similarly, a line invariant under
		\(u_-\) must be \(\Fbar e_2\).  No line is invariant under both elements.
		Thus there is no \(\SL_2(k)\)-invariant line in \(\Fbar^2\), which proves
		absolute irreducibility.
	\end{proof}
	
	The preceding lemmas will be applied below to construct all the required
	representations on a single curve.  The next proposition shows how to pass
	from that curve to infinitely many others without changing any of the
	monodromy groups.
	
	\begin{proposition}
		\label{prop:monodromy-towers}
		Let \(X\) be a smooth projective geometrically connected curve of genus
		\(g\) over \(\mathbf F_q\), and let \(\lambda\) be a rational prime
		different from the characteristic of \(\mathbf F_q\).  Then there exist
		smooth projective geometrically connected curves \(X_j\) over
		\(\mathbf F_q\) and finite \'{e}tale covers
		\[
		X_j\longrightarrow X,\qquad j\geq1,
		\]
		depending only on \(X\) and \(\lambda\), with the following properties.
		\begin{enumerate}
			\item For every \(j\geq1\),
			\[
			\deg(X_j/X)=\lambda^{2gj},
			\qquad
			g(X_j)=1+\lambda^{2gj}(g-1).
			\]
			
			\item If the genus of \(X\) is at least \(2\), then the curves
			\(X_j\) are pairwise geometrically nonisomorphic.
			
			\item Let \(G\) be a finite perfect group and let
			\[
			\rho:\pi_1^{\et}(X)\longrightarrow G
			\]
			be a continuous homomorphism whose restriction to \(\pi_1^{\et}(X_{\overline{\mathbf F}_q})\)
			is surjective.  Then, for every \(j\geq1\),
			\[
			\rho\bigl(\pi_1^{\et}((X_j)_{\overline{\mathbf F}_q})\bigr)
			=
			\rho\bigl(\pi_1^{\et}(X_j)\bigr)
			=G.
			\]
		\end{enumerate}
	\end{proposition}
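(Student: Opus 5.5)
The plan is to take \(X_j\) to be the pullback of \(X\) along multiplication by \(\lambda^j\) on the Jacobian. Assume first that \(g\geq1\); the case \(g=0\) is degenerate and needs separate handling. Write \(J=\operatorname{Pic}^0_{X/\mathbf F_q}\). After replacing \(X\) by itself we need an \(\mathbf F_q\)-morphism \(X\to J\); if \(X(\mathbf F_q)\neq\emptyset\), use an Abel--Jacobi map \(\iota\) based at a rational point. In general one can instead use the degree-one divisor supplied by Lang's theorem on finite fields, composing \(X\to\operatorname{Pic}^1\) with a translation to \(J\) by a rational point of \(\operatorname{Pic}^1\), which exists by Lang. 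Then set
\[
X_j=X\times_{J,[\lambda^j]}J .
\]
Since \([\lambda^j]\) is finite étale of degree \(\lambda^{2gj}\) (because \(\lambda\neq p\)), the projection \(X_j\to X\) is finite étale of degree \(\lambda^{2gj}\). Over \(\overline{\mathbf F}_q\), the map \(\iota_*\colon\pi_1(X_{\bar{}})\to\pi_1(J_{\bar{}})\) is surjective; this is the abelianization, so \(H_1\) surjects. Hence \(X_j\) is geometrically connected. It is a torsor under \(J[\lambda^j]\), corresponding to the surjection \(\pi_1^{\mathrm{ab}}\to J[\lambda^j](\overline{\mathbf F}_q)\). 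Smoothness and projectivity are inherited from \(X\). Riemann--Hurwitz for étale covers gives \(2g(X_j)-2=\lambda^{2gj}(2g-2)\), which is (1).

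For (2), when \(g\geq2\) the genera \(1+\lambda^{2gj}(g-1)\) are strictly increasing in \(j\). Geometrically isomorphic curves have equal genera, so the \(X_j\) are pairwise geometrically nonisomorphic.

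For (3), the key point is that \(\pi_1^{\et}((X_j)_{\overline{\mathbf F}_q})\) is an open normal subgroup of \(\Pi=\pi_1^{\et}(X_{\overline{\mathbf F}_q})\). It contains the kernel \(N\) of the surjection \(\Pi\to J[\lambda^j]\), and in fact equals \(N\), with abelian quotient of exponent \(\lambda^j\). So \(\rho(N)\) is a normal subgroup of \(\rho(\Pi)=G\), and \(G/\rho(N)\) is a quotient of an abelian group, hence abelian. Since \(G\) is perfect, \(\rho(N)=G\). Finally \(\rho(\pi_1(X_j))\) lies between \(\rho(N)\) and \(G\), so it also equals \(G\).

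The main obstacle I expect is the bookkeeping of base points and the rational-point issue for the Abel--Jacobi map. I would handle it with the Lang-theorem translation described above. One also has to check that the pullback cover is defined over \(\mathbf F_q\) and that its geometric fundamental group is exactly the kernel \(N\). Both follow from the fiber-product description together with the geometric connectedness argument above.
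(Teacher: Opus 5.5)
Your proof is correct, but it builds the tower differently from the paper.

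\textbf{The paper's route.} The paper never uses the Jacobian. It defines $K_j=\ker\bigl(\Pi\to\Pi^{\mathrm{ab}}/\lambda^j\Pi^{\mathrm{ab}}\bigr)$ directly as a characteristic open subgroup of $\Pi=\pi_1^{\et}(X_{\overline{\mathbf F}_q})$. It computes the index $\lambda^{2gj}$ from the Kummer sequence. It then descends to $\mathbf F_q$ purely group-theoretically: choose a section $s\colon\widehat{\mathbf Z}\to\pi_1^{\et}(X)$ and set $\Gamma_j=K_j\,s(\widehat{\mathbf Z})$.

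\textbf{Your route.} You realize $X_j$ as the pullback of the isogeny $[\lambda^j]$ along an $\mathbf F_q$-rational Abel--Jacobi map. Lang's theorem, giving a rational point of $\operatorname{Pic}^1$, plays the role of the paper's choice of a Frobenius lift. Both choices are independent of $G$ and $\rho$, which is all that ``depending only on $X$ and $\lambda$'' really requires.

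\textbf{Comparison.} Geometrically the two towers coincide. The surjection $\Pi^{\mathrm{ab}}/\lambda^j\to J[\lambda^j](\overline{\mathbf F}_q)$ goes between groups of the same order $\lambda^{2gj}$, so your $N$ equals the paper's $K_j$; only the $\mathbf F_q$-form can differ.
\begin{itemize}
\item Your version gives the degree immediately from $\deg[\lambda^j]$. It also describes $X_j$ explicitly as a $J[\lambda^j]$-torsor.
\item Its cost is a heavier input: that Abel--Jacobi induces a surjection of $\Pi$ onto $\pi_1^{\et}(J_{\overline{\mathbf F}_q})$, or at least onto $J[\lambda^j]$. This is geometric class field theory, or autoduality of the Jacobian for the $\lambda$-part.
\item The paper needs only the Kummer computation of $\Pi^{\mathrm{ab}}/\lambda^j$ and the freeness of $\widehat{\mathbf Z}$.
\end{itemize}
Your part~(3) argument (normal image with abelian quotient inside a perfect group) is the paper's commutator argument $K_j\supseteq\overline{[\Pi,\Pi]}$ in different words.

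\textbf{Genus zero.} The case $g=0$ needs no separate handling. There $J=0$, so your construction, like the paper's, gives $X_j=X$, and all three properties hold trivially.
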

	
	\begin{proof}
		Put \(\Gamma=\pi_1^{\et}(X)\) and \(\Pi=\pi_1^{\et}(X_{\overline{\mathbf F}_q})\).  The construction is
		guided by the last assertion.  To preserve a perfect quotient of \(\Pi\),
		it is enough to pass to subgroups that still contain the closed commutator
		subgroup \(\overline{[\Pi,\Pi]}\).  We therefore obtain the covers by
		shrinking only the abelianization of \(\Pi\).
		
		Fix \(j\geq1\) and put \(n=\lambda^j\).  After choosing an identification
		\(\mu_n\simeq\mathbf Z/n\mathbf Z\), the Kummer sequence on
		\(X_{\overline{\mathbf F}_q}\) gives
		\cite[Lem.~59.69.1]{Stacks}
		\begin{align*}
			\operatorname{Hom}_{\mathrm{cont}}(\Pi,\mathbf Z/n\mathbf Z)
			&=H^1_{\et}(X_{\overline{\mathbf F}_q},\mathbf Z/n\mathbf Z)\\
			&\simeq H^1_{\et}(X_{\overline{\mathbf F}_q},\mu_n)\\
			&\simeq \operatorname{Pic}(X_{\overline{\mathbf F}_q})[n]\\
			&=\operatorname{Jac}(X)[n](\overline{\mathbf F}_q)\\
			&\simeq(\mathbf Z/n\mathbf Z)^{2g}.
		\end{align*}
		The Kummer sequence has no contribution from global units because
		\(\overline{\mathbf F}_q^\times\) is \(n\)-divisible.  Since \(\lambda\)
		is different from the characteristic of \(\mathbf F_q\), the group of
		\(n\)-torsion points of the Jacobian is isomorphic to
		\((\mathbf Z/n\mathbf Z)^{2g}\)
		\cite[Prop.~39.9.11]{Stacks}.  Pontryagin duality therefore gives
		\begin{equation}\label{eq:abelian-level-index}
			\Pi^{\mathrm{ab}}/\lambda^j\Pi^{\mathrm{ab}}
			\simeq(\mathbf Z/\lambda^j\mathbf Z)^{2g}.
		\end{equation}
		Define the open characteristic subgroup
		\[
		K_j=\ker\!\left(
		\Pi\longrightarrow
		\Pi^{\mathrm{ab}}/\lambda^j\Pi^{\mathrm{ab}}
		\right).
		\]
		
		We next lift \(K_j\) to \(\Gamma\) in a way that does not enlarge the
		constant field.  The constant-field exact sequence admits a continuous
		section \(s:\widehat{\mathbf Z}\to\Gamma\): indeed,
		\(\widehat{\mathbf Z}\) is the free profinite group on one generator, so a
		lift of that generator to \(\Gamma\) determines a section
		\cite[\S3.3, Ex.~3.3.8(a), p.~90]{RibesZalesskii10}.  Fix such a
		section once and for all.  Since \(K_j\) is characteristic in the normal
		subgroup \(\Pi\), the subgroup \(s(\widehat{\mathbf Z})\) normalizes
		\(K_j\), and hence
		\[
		\Gamma_j=K_j s(\widehat{\mathbf Z})
		\]
		is a subgroup of \(\Gamma\).  The splitting writes
		\(\Gamma=\Pi\rtimes s(\widehat{\mathbf Z})\), so
		\[
		\Gamma_j\cap\Pi=K_j,
		\qquad
		[\Gamma:\Gamma_j]=[\Pi:K_j]=\lambda^{2gj}
		\]
		by \eqref{eq:abelian-level-index}.  Thus \(\Gamma_j\) is open and, after
		choosing compatible geometric base points, corresponds to a connected finite
		\'{e}tale cover \(X_j\to X\) of degree \(\lambda^{2gj}\)
		\cite[Thm.~58.6.2]{Stacks}.  Moreover,
		\(\Gamma_j\to\widehat{\mathbf Z}\) is surjective, equivalently
		\(\Gamma=\Pi\Gamma_j\); hence \(X_j\) is geometrically connected and
		\(\pi_1^{\et}((X_j)_{\overline{\mathbf F}_q})\) identifies with \(K_j\), up to conjugacy.  Being finite
		\'{e}tale over \(X\), the curve \(X_j\) is smooth and projective.
		
		The unramified Riemann--Hurwitz formula
		\cite[Lem.~53.12.2]{Stacks} now gives
		\[
		2g(X_j)-2=\lambda^{2gj}(2g-2),
		\]
		which is the genus formula in part~(1).  If the genus of \(X\) is at least
		\(2\), these genera are strictly increasing with \(j\); this proves
		part~(2).
		
		It remains to prove part~(3).  Let \(G\) and \(\rho\) be as in that part.
		Since \(K_j\) contains \(\overline{[\Pi,\Pi]}\), the surjectivity of
		\(\rho|_\Pi\) and the perfectness of \(G\) give
		\[
		\rho(K_j)
		\supseteq
		\rho\bigl(\overline{[\Pi,\Pi]}\bigr)
		=[\rho(\Pi),\rho(\Pi)]
		=[G,G]
		=G.
		\]
		Thus \(\rho(K_j)=G\).  The arithmetic image of \(\Gamma_j\) contains
		this geometric image and is itself contained in \(G\), so both images are
		equal to \(G\).  The construction of \(K_j\) and \(\Gamma_j\) did not use
		\(G\) or \(\rho\); consequently the same tower has this property for every
		finite perfect quotient under consideration.
	\end{proof}
	
	\section{Geometry of the moduli spaces and a rank-one calculation}
	
	Section~3 reduces the construction to two geometric questions: what are the
	components of the level cover, and how does arithmetic Frobenius act on
	them?  Both questions are answered by comparing the quaternionic moduli curve
	with its rank-one counterpart through the determinant morphism.
	
	Put
	\[
	A=\Fp[T],\qquad C=\mathbf P^1_{\Fp},
	\]
	and let \(\infty\) be the point corresponding to \(1/T\).  Write
	\(\mathcal O_C\) for the structure sheaf of \(C\).  Let \(D\) be a quaternion
	algebra over \(\Fp(T)\), split at \(\infty\) and ramified at some finite
	place, and let
	\(\mathcal D\) be a maximal \(\mathcal O_C\)-order in \(D\).  Let
	\(\Ram(D)\subset |C|\) denote the finite set of closed points \(v\) such that
	\[
	D\otimes_{\Fp(T)}\Fp(T)_v
	\not\simeq \mathrm{M}_2\bigl(\Fp(T)_v\bigr).
	\]
	The points in \(\Ram(D)\) are called the \emph{ramification points} of \(D\);
	equivalently, at each such point \(v\), the completion of \(D\) is a quaternion
	division algebra rather than a \(2\times2\) matrix algebra.
	
	For a nonzero ideal \(\mathfrak i\subset A\), let
	\(I=\operatorname{Spec}(A/\mathfrak i)\subset C\setminus\{\infty\}\).  We
	shall use the same letter \(I\) for the ideal and the corresponding finite closed
	subscheme; thus expressions such as \(A/I\) and \(I\ell\) refer to ideals.
	An inclusion \(I\subset I'\) between levels means inclusion of closed
	subschemes, equivalently \(\mathfrak i'\subset\mathfrak i\) for their defining
	ideals.  For a nonempty finite level
	\[
	I\subset C\setminus\bigl(\Ram(D)\cup\{\infty\}\bigr),
	\]
	write
	\[
	C'_I=C\setminus\bigl(I\cup\Ram(D)\cup\{\infty\}\bigr),
	\qquad
	\mathcal D_I=H^0(I,\mathcal D|_I).
	\]
	
	To make the determinant morphism and the level actions precise, we first
	recall the two moduli spaces involved.
	
	\begin{definition}[The moduli schemes \(M_I^D\) and
		\(M_I^{\mathcal O_C}\)]\label{def:moduli-scheme}
		Let \(\mathcal B\) be \(\mathcal D\) or \(\mathcal O_C\), put
		\(d_{\mathcal B}=2\) or \(1\), respectively, and set
		\[
		U_{\mathcal B,I}=
		\begin{cases}
			C'_I,&\mathcal B=\mathcal D,\\
			C\setminus(I\cup\{\infty\}),&\mathcal B=\mathcal O_C.
		\end{cases}
		\]
		For \(S\to U_{\mathcal B,I}\), put
		\(\tau=(\mathrm{id}_C\times\operatorname{Frob}_S)^*\).  A
		\emph{full level-\(I\) \(\mathcal B\)-elliptic sheaf} is a tuple
		\[
		\bigl((\mathcal E_i,j_i,t_i)_{i\in\mathbf Z},\iota\bigr),
		\]
		where \(\mathcal E_i\) is locally free of rank \(d_{\mathcal B}^2\) on
		\(C\times S\), with a right \(\mathcal B\)-action, and
		\[
		j_i:\mathcal E_i\hookrightarrow\mathcal E_{i+1},
		\qquad
		t_i:\tau\mathcal E_i\hookrightarrow\mathcal E_{i+1}
		\]
		are injective \(\mathcal B\)-linear maps.  They satisfy
		\[
		j_i t_{i-1}=t_i\tau(j_{i-1}),
		\qquad
		\mathcal E_{i+d_{\mathcal B}}=\mathcal E_i(\infty),
		\]
		and \(j_{i+d_{\mathcal B}-1}\cdots j_i\) is the natural inclusion into
		\(\mathcal E_i(\infty)\).  The cokernels of \(j_i\) and \(t_i\) are the direct images of
		locally free \(\mathcal O_S\)-modules of rank \(d_{\mathcal B}\)
		along the section \(s\mapsto(\infty,s)\) and the graph of the given
		morphism \(S\to C\), respectively
		\cite[Def.~(2.2) and Rems.~(2.3)(b)--(c), pp.~223--224]{LRS93}.
		Since the characteristic and \(\infty\) are disjoint from \(I\), the
		restrictions of every \(j_i\) and \(t_i\) to \(I\times S\) are
		isomorphisms.  The maps \(j_i|_{I\times S}\) identify all
		\(\mathcal E_i|_{I\times S}\) with a single locally free module
		\(\mathcal E_I\); the commutative squares above imply that the restricted
		maps \(t_i\) induce one map
		\[
		t_I:\tau\mathcal E_I\xrightarrow{\ \sim\ }\mathcal E_I.
		\]
		A \emph{full level structure} is a \(\mathcal B|_I\)-linear isomorphism
		\[
		\iota:\mathcal B|_I\boxtimes\mathcal O_S
		\xrightarrow{\ \sim\ }\mathcal E_I
		\]
		satisfying \(t_I\circ\tau(\iota)=\iota\), where
		\(\tau(\mathcal B|_I\boxtimes\mathcal O_S)\) is canonically identified
		with \(\mathcal B|_I\boxtimes\mathcal O_S\).  Thus the target of
		\(\iota\) is independent of the index; this is the convention of
		\cite[(2.5) and Def. (2.7), pp.~225--226]{LRS93}.
		For \(g\in H^0(I,\mathcal B|_I)^\times\), the right action on level
		structures is \(\iota\cdot g=\iota\circ L_g\), where \(L_g(x)=gx\).
		
		Let \(\mathcal E\!ll_I^{\mathcal B}\) denote the resulting fppf moduli
		stack.  The index shift \(i\mapsto i+1\) acts by reindexing the three
		families above and transporting \(\mathcal E_I\) and its trivialization.
		The quotient fppf sheaf \(\mathcal E\!ll_I^{\mathcal B}/\mathbf Z\) is
		represented by \(M_I^{\mathcal B}\) over
		\(U_{\mathcal B,I}\).  We write \(M_I^{\mathcal D}=M_I^D\).  For
		\(\mathcal B=\mathcal O_C\), this is equivalently the moduli scheme of
		rank-one Drinfeld \(A\)-modules with full level \(I\).  See
		\cite[\S2, pp.~223--227, and Thm. (5.1), p.~241]{LRS93} and
		\cite[\S3, pp.~121--122]{Pap09}.
	\end{definition}
	
	Since \(C'_I\subset C\setminus(I\cup\{\infty\})\), define the restriction
	to the common characteristic base by
	\[
	M_{I,C'_I}^{\mathcal O_C}
	:=M_I^{\mathcal O_C}
	\times_{C\setminus(I\cup\{\infty\})}C'_I.
	\]
	For \(\mathcal B\in\{\mathcal D,\mathcal O_C\}\) and a closed point
	\(v\in U_{\mathcal B,I}\), set
	\begin{equation}\label{eq:modular-fiber-notation}
		M_{I,v}^{\mathcal B}
		:=M_I^{\mathcal B}\times_{U_{\mathcal B,I}}\operatorname{Spec}\kappa(v),
	\end{equation}
	and write \(M_{I,v}^D\) when \(\mathcal B=\mathcal D\).
	
	The reduced norm induces a canonical homomorphism
	\[
	\operatorname{Nrd}_I:\mathcal D_I^\times\longrightarrow(A/I)^\times.
	\]
	At every finite point outside \(\Ram(D)\), the completed local order
	is a matrix algebra over the completed local ring
	\cite[(1.4)--(1.5), pp.~222--223]{LRS93}. Reducing these local splittings
	modulo the prime-power factors of \(I\) and applying the Chinese
	remainder theorem shows that, whenever \(I\) is disjoint from
	\(\Ram(D)\), the algebra \(\mathcal D_I\) is split Azumaya.
	After choosing a splitting
	\(\mathcal D_I\simeq \mathrm{M}_2(A/I)\), the map \(\operatorname{Nrd}_I\) becomes
	the ordinary matrix determinant
	\cite[(9.6)--(9.8), p.~116]{Rei03}. We use \(\det\) only after such a
	splitting has been fixed.
	
	The proof of Theorem~\ref{thm:main} uses three features of these moduli
	spaces: \(M_I^D\to C'_I\) is a smooth projective family of curves,
	changing the level gives a finite \'{e}tale torsor, and the determinant
	morphism has irreducible fibers. The following proposition provides
	all three.
	
	\begin{proposition}\label{prop:modular-input}
		The moduli schemes introduced above have the following properties.
		\begin{enumerate}
			\item The morphism \(M_I^D\to C'_I\) is smooth and projective of pure
			relative dimension \(1\).
			\item If \(I\subset I'\) are nonempty finite levels in the closed-subscheme
			sense fixed above, then the level-forgetting
			map
			\begin{equation}\label{eq:correct-level-change}
				M_{I'}^D
				\longrightarrow
				M_I^D\times_{C'_I}C'_{I'}
			\end{equation}
			is a finite \'{e}tale right torsor under
			\[
			G_{I',I}
			=
			\ker\!\left(\mathcal D_{I'}^\times
			\longrightarrow\mathcal D_I^\times\right).
			\]
			\item There is a surjective determinant morphism
			\[
			\wp_I:M_I^D\longrightarrow M_{I,C'_I}^{\mathcal O_C}
			\]
			to the restriction of the rank-one moduli scheme to \(C'_I\).  Every
			geometric fiber of \(\wp_I\) is
			irreducible, and
			\[
			\wp_I(zg)=\wp_I(z)\operatorname{Nrd}_I(g)
			\qquad(g\in\mathcal D_I^\times).
			\]
		\end{enumerate}
	\end{proposition}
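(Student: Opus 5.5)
Most of this proposition is assembled from Laumon--Rapoport--Stuhler \cite{LRS93}; the only substantial work is the irreducibility of the determinant fibers in part~(3).

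\emph{Part (1).} I would quote \cite[Thm.~(5.1)]{LRS93}. Over \(C'_I\), the stack \(\mathcal E\!ll_I^{D}/\mathbf Z\) is represented by a scheme \(M_I^D\) that is smooth of relative dimension \(d-1=1\). Because \(D\) is a division algebra, ramified at a finite place, the Drinfeld--Stuhler moduli problem has no degenerations. Hence \(M_I^D\) is proper over \(C'_I\), and by \cite{LRS93} it is projective. Relative dimension one follows from the Serre--Tate / Drinfeld deformation theory in \cite[\S\S4--5]{LRS93}: the tangent space at a point is computed by the cokernel of \(t_i\), which has rank \(d-1\).

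\emph{Part (2).} I would argue as for modular curves with full level. The forgetful map sends \((\mathcal E_\bullet,\iota')\) to \((\mathcal E_\bullet,\iota'|_I)\). Since \(I'\) is disjoint from the characteristic, the restriction \(\mathcal E_{I'}\) with its \(\tau\)-semilinear isomorphism \(t_{I'}\) is étale-locally trivial, i.e.\ it is an étale \(\mathcal D_{I'}\)-local system (Lang's theorem). Full level-\(I'\) structures lifting a given level-\(I\) structure therefore form, étale-locally, a torsor under the automorphisms of the trivial object that restrict to the identity on \(I\). That group is exactly \(G_{I',I}\). Representability and finiteness follow from \cite[(2.5)--(2.7)]{LRS93}, and étaleness follows because the torsor is étale-locally trivial.

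\emph{Part (3).} The plan has two halves.
\begin{enumerate}
\item First, define \(\wp_I\) via the determinant construction in \cite{LRS93}. After Morita reduction, take the top exterior power of the underlying rank-\(d\) chain, i.e.\ the reduced-norm functor. This produces an \(\mathcal O_C\)-elliptic sheaf, that is, a rank-one Drinfeld module. Its level structure is \(\operatorname{Nrd}(\iota)\), which gives the equivariance formula \(\wp_I(zg)=\wp_I(z)\operatorname{Nrd}_I(g)\).
\item Second, show that the geometric fibers are irreducible. This is the main obstacle, and I would proceed as follows:
\begin{enumerate}
\item Pass to the \(\mathbf{C}_\infty\)-uniformization, or equivalently to the adelic description \(M_I^D(\overline{\mathbf F})\) of \cite{LRS93}. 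Over a geometric point of the base, the components of \(M_I^D\) are given by a double coset \(D^\times\backslash D_{\mathbb A_f}^\times/K_I\), combined with the Drinfeld upper half-plane at \(\infty\), which is connected.
\item By strong approximation for \(D^1\) (the norm-one group), noncompact at the split place \(\infty\), the reduced norm identifies \(\pi_0\) with \(F^\times\backslash\mathbb A_f^\times/\operatorname{Nrd}(K_I)\).
\item This set is in turn \(\pi_0\) of the rank-one level-\(I\) space, since \(\operatorname{Nrd}(K_I)\) is the full level-\(I\) subgroup by surjectivity of \(\det\) on \(\GL_2\).
\end{enumerate}
Consequently each fiber of \(\wp_I\) is the set of components lying over one rank-one point, and that set has exactly one element. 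Connectedness together with smoothness then gives irreducibility.
\end{enumerate}

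The delicate points are passing from characteristic-zero-style uniformization to fibers over each closed point \(v\) of \(C'_I\), and handling the fibers over non-generic characteristic points. I would treat these through Zariski connectedness (Stein factorization) of the proper smooth family, applied to the fibers over \(M^{\mathcal O_C}\). Surjectivity of \(\wp_I\) then follows from equivariance, because \(\operatorname{Nrd}_I\) is surjective and \(\mathcal D_I^\times\) acts transitively on the components.
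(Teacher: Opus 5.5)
Your architecture matches the paper's. You quote LRS for (1) and (2), build $\wp_I$ by a reduced-norm construction, compare component counts on the geometric generic fiber, and use equivariance and surjectivity of $\operatorname{Nrd}_I$ to get one component over each rank-one point. You then spread this to all fibers by properness and smoothness, and deduce irreducibility from smoothness plus connectedness.

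The genuine gap is the construction of $\wp_I$ as you describe it. Morita reduction needs $\mathcal D$ to be \'{e}tale-locally a matrix algebra, and this fails at every $v\in\Ram(D)$. There $\mathcal D$ is locally the maximal order of a division algebra and stays non-Azumaya after any \'{e}tale base change. The characteristic avoids $\Ram(D)$, but the sheaves $\mathcal E_i$ live on all of $C\times S$, including the divisors $\{v\}\times S$. So your recipe only yields a line bundle on $(C\setminus\Ram(D))\times S$, and its extension across $\{v\}\times S$ is not determined. Twisting uniformly by $\mathcal O_C(v)$ is compatible with the $j_i$, the $t_i$ and the level structure, but in general moves the resulting point of $M^{\mathcal O_C}_I$. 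The obvious global substitute $\bigwedge^4\mathcal E_i$ gives only the square of the desired line bundle, up to a twist pulled back from $C$.

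The paper supplies the missing construction, following Lafforgue, in three steps.
\begin{enumerate}
\item An \'{e}tale-local embedding $\mathcal D\otimes\mathcal O_U\hookrightarrow\mathrm M_2(\mathcal O_U)$, generically an isomorphism, has a determinant that descends to a morphism of monoid schemes $\operatorname{Nrd}\colon\mathcal D\to\mathcal O_C$. This morphism is defined also at $\Ram(D)$.
\item $\mathcal E_i$ is \'{e}tale-locally free of rank one over $\mathcal D$ near $\{v\}\times S$. Since $j_i$ and $t_i$ are isomorphisms on $v\times S$, LRS Lemma (2.6) makes $\mathcal E_i|_{v\times S}$ \'{e}tale-locally free of rank one over $\mathcal D|_v\boxtimes\mathcal O_S$. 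A lifted generator then gives $\mathcal D\boxtimes\mathcal O_S\simeq\mathcal E_i$ near the point.
\item $\det_{\mathcal D}$ is defined through reduced norms of transition functions. A local check shows that $\det_{\mathcal D}j_i$ and $\det_{\mathcal D}t_i$ have line-bundle cokernels on $\{\infty\}\times S$ and on the graph of the characteristic.
\end{enumerate}
Only then do $\wp_I$ and the formula $\wp_I(zg)=\wp_I(z)\operatorname{Nrd}_I(g)$ make sense.

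The rest is sound in outline but needs a few more inputs. Your generic count takes a different route from the paper. The paper cites Papikian for the fact that $M^D_{I,\bar\eta}$ has as many connected components as $M^{\mathcal O_C}_{I,\bar\eta}$ has points, and combines this equality with surjectivity. You instead use the $\infty$-adic uniformization of the generic fiber, strong approximation for $D^1$, and surjectivity of the reduced norm $D^\times\to\Fp(T)^\times$. That is legitimate, but the uniformization has to be imported from outside LRS and concerns only the geometric generic fiber. It is not equivalent to the LRS adelic description of points over closed points of $C'_I$, which does not compute $\pi_0$. At $v\in\Ram(D)$ you also need the reduced norm on units of the completed maximal order to be onto the local units; surjectivity of $\det$ on $\GL_2$ does not cover those places.

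To spread the result to all fibers, you must know that the rank-one target $M^{\mathcal O_C}_{I,C'_I}$ is finite \'{e}tale over $C'_I$. LRS with $d=1$ gives this: the target is smooth and proper of relative dimension $0$. This makes $\wp_I$ proper and smooth, and forces every connected component of the target to meet the generic fiber. Local constancy of the number of geometric connected components (equivalently, your Stein factorization comparison) then carries the generic count to every fiber. Closedness of the image gives surjectivity of $\wp_I$ everywhere.

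Finally, in (2) the stack-level torsor of LRS (4.8) must be pushed through the $\mathbf Z$-quotient. The paper does this by splitting both sides according to $\deg\mathcal E_0\in\{0,1\}$.
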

	
	\begin{proof}
		(1) Smoothness and the relative dimension in part~(1) are proved in
		\cite[Thm. (4.1), p.~236]{LRS93}. For nonempty full level, there
		are no nontrivial automorphisms, and each piece with fixed
		\(\deg(\mathcal E_0)\) is a quasi-projective scheme of finite type;
		see \cite[Thm. (5.1) and its proof, p.~241]{LRS93}.
		The degree is locally constant in families, and the index shift
		changes it by \(2\). Thus, locally on the parameter scheme, each orbit
		has a unique representative of degree \(0\) or \(1\), and the quotient
		is represented by the disjoint union of these two fixed-degree pieces.
		In the division-algebra case, it is proper by
		\cite[Thm. (6.1) and Cor. (6.2), p.~246]{LRS93}.
		Being proper and quasi-projective, it is projective. This description
		of \(M_I^D\) is also given in \cite[\S3, p.~121]{Pap09}.
		
		\smallskip\noindent
		(2) The level-forgetting torsor first appears before the index-shift quotient.
		Indeed, \cite[(4.8), pp.~239--240]{LRS93} states that restriction of the
		level structure gives a right
		\(G_{I',I}\)-torsor
		\[
		Y:=\mathcal E\!ll_{I'}^{\mathcal D}
		\longrightarrow
		B:=\mathcal E\!ll_I^{\mathcal D}\times_{C'_I}C'_{I'}
		\]
		over \(C'_{I'}\). For \(a=0,1\), let \(Y_a\subset Y\) and \(B_a\subset B\) be
		the open and closed subschemes defined by
		\(\deg(\mathcal E_0)=a\). Forgetting part of the level structure
		does not change the underlying elliptic sheaf, so
		\[
		Y_a=Y\times_B B_a.
		\]
		Thus \(Y_a\to B_a\) is a right \(G_{I',I}\)-torsor by base change.
		
		As in part~(1), shifting the indices changes
		\(\deg(\mathcal E_0)\) by \(2\). Choosing the unique representative
		of degree \(0\) or \(1\) therefore identifies
		\[
		Y/\mathbf Z\simeq Y_0\amalg Y_1,
		\qquad
		B/\mathbf Z\simeq B_0\amalg B_1.
		\]
		Both the level-forgetting map and the \(G_{I',I}\)-action
		commute with index shifts and preserve the degree. Under these
		identifications, the quotient map is consequently the disjoint
		union of the two torsors \(Y_a\to B_a\).
		It is therefore a right \(G_{I',I}\)-torsor. Since this group
		is constant and finite, the map is finite \'{e}tale.
		This proves part~(2).
		
		\smallskip\noindent
		(3) The determinant morphism is likewise constructed on families before taking
		the quotient. Lafforgue's construction uses an \'{e}tale cover \(U\to C\)
		and an embedding of \(\mathcal O_U\)-algebras
		\[
		e:\mathcal D\otimes_{\mathcal O_C}\mathcal O_U
		\hookrightarrow\mathrm M_2(\mathcal O_U)
		\]
		which is an isomorphism at every generic point of \(U\)
		\cite[Chapitre~I, \S1(f), Lemme~3, pp.~26--27]{Laf97}.
		The embedding need not be an isomorphism above \(\Ram(D)\).
		
		Composing \(e\) with the ordinary matrix determinant gives a
		multiplicative morphism over \(U\). On overlaps, the generic embeddings
		differ by conjugation, so their determinants agree on the generic fibers.
		They therefore agree everywhere: the underlying vector bundle is reduced,
		and these generic fibers are dense. \'{E}tale descent gives a unique
		morphism of monoid schemes
		\[
		\operatorname{Nrd}:\mathcal D\longrightarrow\mathcal O_C
		\]
		extending the reduced norm of \(D\). In particular, this morphism is
		defined at the ramification points as well.
		
		To apply the reduced norm to \(\mathcal E_i\), we first explain why
		local \(\mathcal D\)-bases exist. Away from \(\Ram(D)\), we may split
		\(\mathcal D\) \'{e}tale-locally. By the explicit Morita equivalence in
		\cite[(9.5), p.~262]{LRS93}, \(\mathcal E_i\) then corresponds to a
		rank-two vector bundle; trivializing this vector bundle locally identifies
		\(\mathcal E_i\) with \(\mathcal D\) as a right \(\mathcal D\)-module.
		
		Now let \(v\) be a ramification point. Since \(v\) meets neither the
		characteristic nor the pole, the maps \(j_i\) and \(t_i\) restrict to
		isomorphisms on \(v\times S\). By
		\cite[Lem.~(2.6), pp.~225--226]{LRS93}, after an \'{e}tale base change
		on \(S\), the restriction \(\mathcal E_i|_{v\times S}\) is free of rank
		one over \(\mathcal D|_v\boxtimes\mathcal O_S\). Lift a generator
		locally to \(\mathcal E_i\). It defines a \(\mathcal D\)-linear map
		\[
		\mathcal D\boxtimes\mathcal O_S\longrightarrow\mathcal E_i
		\]
		whose restriction to \(v\times S\) is an isomorphism. Since both sides
		are vector bundles of rank four and the induced map on the fiber at the
		chosen point is an isomorphism, the map is an isomorphism on a
		neighborhood of that point \cite[Lem.~10.79.4(3)]{Stacks}.
		Thus \(\mathcal E_i\) is
		\'{e}tale-locally free of rank one as a right \(\mathcal D\)-module,
		including at the ramification points.
		
		For an \'{e}tale-locally free
		right \(\mathcal D\)-module \(\mathcal E\) of rank one on \(C\times S\),
		applying the reduced norm to the transition functions of local
		\(\mathcal D\)-bases defines the line bundle
		\(\det_{\mathcal D}(\mathcal E)\). The same construction assigns a morphism
		\(\det_{\mathcal D}(u)\) to each \(\mathcal D\)-linear map \(u\), and
		commutes with arbitrary base change. In the quaternionic case,
		\[
		\det\nolimits_{\mathcal D}(\mathcal E)^{\otimes2}
		\simeq\bigwedge\nolimits^{4}\mathcal E
		\otimes\operatorname{pr}_C^*
		\left(\bigwedge\nolimits^{4}\mathcal D\right)^\vee,
		\]
		where \(\operatorname{pr}_C:C\times S\to C\) is the projection.
		Indeed, left multiplication by \(g\) has determinant
		\(\operatorname{Nrd}(g)^2\) on the underlying rank-four module.
		We check that the determinants satisfy the rank-one conditions in
		Definition~\ref{def:moduli-scheme}. Let \(\Gamma\subset C\times S\) be the
		graph of the characteristic morphism. Both \(\Gamma\) and
		\(\{\infty\}\times S\) avoid the ramification points of \(D\), so
		\(\mathcal D\) is \'{e}tale-locally a matrix algebra near these sections.
		In local \(\mathcal D\)-bases, let \(B\) be the \(2\times2\) matrix
		representing \(j_i\) or \(t_i\). Left multiplication acts separately on
		the two columns, so the cokernel of the original map is the direct sum
		of two copies of \(\operatorname{coker}(B)\). Since this cokernel is a
		rank-two vector bundle on the corresponding section,
		\(\operatorname{coker}(B)\) is a line bundle there.
		
		The section is an effective Cartier divisor
		\cite[Lem.~44.3.1]{Stacks}. Work locally with coordinate ring \(R\),
		let \(z\) be an equation for the section, and trivialize
		\(\operatorname{coker}(B)\). We may choose a basis of the target so
		that the quotient map is
		\[
		R^2\longrightarrow R/(z),\qquad (a,b)\longmapsto b\bmod z.
		\]
		Its kernel is freely generated by \((1,0)\) and \((0,z)\). Since
		\(B\) identifies its source with this kernel, choosing their preimages
		as a basis of the source gives
		\[
		B=\begin{pmatrix}
			1&0\\
			0&z
		\end{pmatrix}.
		\]
		In these bases, the reduced norm is \(\det(B)=z\). Away from the
		section, the original map and its determinant are isomorphisms.
		
		Thus \(\det_{\mathcal D}(j_i)\) and \(\det_{\mathcal D}(t_i)\) are
		injective, with cokernels line bundles on \(\{\infty\}\times S\) and
		\(\Gamma\), respectively. In particular, \(\det_{\mathcal D}(j_i)\)
		identifies
		\[
		\det\nolimits_{\mathcal D}(\mathcal E_{i+1})
		\simeq\det\nolimits_{\mathcal D}(\mathcal E_i)(\infty),
		\]
		which is the required rank-one periodicity. Functoriality preserves
		the commutative squares. The standard generator \(1\) of
		\(\mathcal D|_I\boxtimes\mathcal O_S\) canonically trivializes its
		determinant. Thus the full level structure \(\iota\) induces an
		isomorphism
		\[
		\det\nolimits_{\mathcal D}(\iota):
		\mathcal O_{I\times S}\xrightarrow{\sim}
		\det\nolimits_{\mathcal D}(\mathcal E_I).
		\]
		Since taking determinants commutes with Frobenius pullback, the
		identity \(t_I\circ\tau(\iota)=\iota\) gives
		\[
		\det\nolimits_{\mathcal D}(t_I)\circ
		\tau\!\bigl(\det\nolimits_{\mathcal D}(\iota)\bigr)
		=\det\nolimits_{\mathcal D}(\iota).
		\]
		Hence \(\det_{\mathcal D}(\iota)\) is a full level-\(I\) structure on
		the resulting rank-one elliptic sheaf; see
		\cite[Chapitre~I, \S1(f), pp.~28--29]{Laf97}.
		We therefore obtain a rank-one elliptic sheaf
		with full level structure by sending
		\[
		\bigl((\mathcal E_i,j_i,t_i)_i,\iota\bigr)
		\longmapsto
		\bigl((\det\nolimits_{\mathcal D}\mathcal E_i,
		\det\nolimits_{\mathcal D}j_i,\det\nolimits_{\mathcal D}t_i)_i,
		\det\nolimits_{\mathcal D}\iota\bigr).
		\]
		This construction commutes with base change, so it gives a morphism of
		the fppf moduli sheaves before quotienting by \(\mathbf Z\). Reindexing
		an elliptic sheaf reindexes its determinant by the same amount and
		transports the induced level structure accordingly. After composing
		with the quotient map on the rank-one side, the resulting morphism is
		therefore invariant under index shifts. It factors uniquely through
		the quotient on the source by the universal property of quotient sheaves
		\cite[\S39.20]{Stacks}. Since these quotients are represented by the
		schemes in Definition~\ref{def:moduli-scheme} and the characteristic
		morphism is unchanged, we obtain
		\[
		\wp_I:M_I^D\longrightarrow M_{I,C'_I}^{\mathcal O_C}.
		\]
		It agrees with the morphism used in \cite[\S3, p.~122]{Pap09}.
		For \(g\in\mathcal D_I^\times\), we have
		\(\det_{\mathcal D}(L_g)=\operatorname{Nrd}_I(g)\).
		Functoriality and the right action \(\iota\cdot g=\iota\circ L_g\)
		therefore give
		\[
		\det\nolimits_{\mathcal D}(\iota\circ L_g)
		=\det\nolimits_{\mathcal D}(\iota)\cdot\operatorname{Nrd}_I(g),
		\qquad
		\wp_I(zg)=\wp_I(z)\operatorname{Nrd}_I(g).
		\]
		Thus \(\wp_I\) is equivariant for the reduced norm, as in
		\cite[Chapitre~I, \S1(f), p.~29]{Laf97}, and so are its induced maps
		on geometric connected components.
		
		To see that this morphism is surjective over every characteristic, we first
		consider the geometric generic fiber. Let
		\(\bar\eta=\operatorname{Spec}\overline{\mathbf F_p(T)}\) be a geometric
		generic point of \(C'_I\). Base change to \(\bar\eta\) gives the morphism
		\[
		\wp_{I,\bar\eta}\colon
		M_{I,\bar\eta}^{D}\longrightarrow M_{I,\bar\eta}^{\mathcal O_C},
		\]
		where the source and target denote the corresponding geometric generic
		fibers. The geometric points of \(M_{I,\bar\eta}^{\mathcal O_C}\)
		form a nonempty finite set on which \((A/I)^\times\) acts
		transitively
		\cite[proof of Prop.~3.1, p.~122]{Pap09}.
		For the underlying class-field-theoretic description, see
		\cite[\S8, Thm.~1, p.~585]{Dri74}.
		The number of components of the geometric generic fiber of \(M_I^D\)
		is the same nonzero ray class number
		\cite[Cor.~6.2 and its proof, p.~130]{Pap09}, so that fiber is
		nonempty. Its image under
		\(\wp_{I,\bar\eta}\) is therefore nonempty and, by equivariance and the
		surjectivity of
		\(\operatorname{Nrd}_I:\mathcal D_I^\times\to(A/I)^\times\), contains the
		entire orbit of any one of its points.  Hence \(\wp_{I,\bar\eta}\) is
		surjective.
		
		The rank-one target is finite \'{e}tale over \(C'_I\). Indeed, the
		same moduli results, applied with \(\mathcal B=\mathcal O_C\) and
		\(d_{\mathcal B}=1\), make it smooth and proper of relative dimension
		zero; see \cite[Thms. (4.1), (5.1) and Cor. (6.2), pp.~236, 241, 246]{LRS93}.
		Consequently every connected
		component of the target meets the geometric generic fiber and is the closure
		of its generic points. By part~(1), the source is proper over \(C'_I\),
		while the target is finite, hence separated, over \(C'_I\).
		Therefore \(\wp_I\) is proper
		\cite[Lem.~29.42.7, Tag~01W6]{Stacks}. Its image is therefore
		closed; generic surjectivity makes this image contain the generic points of
		every target component, and hence their closures.  Thus \(\wp_I\) is
		surjective.
		
		We now prove that every geometric fiber is irreducible, following
		\cite[Prop.\ 3.1, p.~122]{Pap09}. The preceding component count shows
		that \(M_{I,\bar\eta}^{D}\) has exactly as many connected components as
		\(M_{I,\bar\eta}^{\mathcal O_C}\) has points. Since
		\(\wp_{I,\bar\eta}\) is surjective and each connected component maps to a
		single point, every point of the target has exactly one connected component
		above it. Thus the fibers of \(\wp_{I,\bar\eta}\) are connected.
		
		The morphism \(\wp_I\) is smooth as well as proper. Indeed,
		\'{e}tale-locally on \(C'_I\), its finite \'{e}tale target is a disjoint
		union of copies of the base. The inverse image of each copy is an open and
		closed subscheme of the smooth source, so the restricted morphism is
		smooth. For a smooth proper morphism, the number of connected components
		of the geometric fibers is locally constant; see
		\cite[Lem.~37.53.8]{Stacks}. This number is one at every geometric generic
		point of the target, by the preceding paragraph. Since every connected
		component of the target meets the generic fiber, it is one everywhere.
		
		Finally, each geometric fiber is a smooth connected curve. Distinct
		irreducible components of a smooth curve cannot meet, because its local
		rings are regular local rings and hence domains
		\cite[Lem.~10.106.2]{Stacks}. Connectedness therefore
		implies irreducibility.
	\end{proof}
	
	Proposition~\ref{prop:modular-input} reduces the component calculation to the
	rank-one moduli space.  We therefore compute its geometric points and the
	action of Frobenius explicitly.
	
	\begin{lemma}
		\label{lem:rank-one}
		Let \(I\) be a nonempty finite level whose support does not contain the
		point \((T)\).  The
		geometric points of the rank-one fiber
		\[
		M_{I,(T)}^{\mathcal O_C}
		\]
		form a torsor under
		\[
		R_I=(A/I)^\times/\Fp^\times.
		\]
		Arithmetic Frobenius acts on this torsor by right translation by the
		class of \((T\bmod I)^{-1}\).
	\end{lemma}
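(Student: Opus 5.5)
The plan is to make the rank-one fiber completely explicit through the identification of \(M_I^{\mathcal O_C}\) with the moduli scheme of rank-one Drinfeld \(A\)-modules with full level \(I\) (Definition~\ref{def:moduli-scheme}). I will then compute the Galois action on the resulting finite set by hand.

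\emph{Step 1: the points of the fiber.} A geometric point of \(M_{I,(T)}^{\mathcal O_C}\) is an isomorphism class of pairs \((\phi,\iota)\) over \(\Fbar\). Here \(\phi\) is a rank-one Drinfeld \(A\)-module of characteristic \((T)\), and \(\iota\) is a full level-\(I\) structure.

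\begin{itemize}
\item Characteristic \((T)\) forces \(\phi_T=a\tau\) with \(a\in\Fbar^\times\).
\item Conjugating by \(c\in\Fbar^\times\) replaces \(a\) by \(c^{p-1}a\). Since \(c\mapsto c^{p-1}\) is surjective on \(\Fbar^\times\), every such \(\phi\) is isomorphic to the reduced Carlitz module \(\phi_T=\tau\).
\item This module is defined over \(\Fp\), and its automorphism group is \(\Fp^\times\).
\item Because \(I\) is prime to \((T)\), the torsion \(\phi[I](\Fbar)\) is étale and free of rank one over \(A/I\). Hence full level structures \(\iota\colon A/I\xrightarrow{\sim}\phi[I](\Fbar)\) form an \((A/I)^\times\)-torsor under \(\iota\mapsto\iota\circ(\cdot g)\).
\item Dividing by \(\operatorname{Aut}(\phi)=\Fp^\times\), which acts through scalars in \((A/I)^\times\), gives a torsor under \(R_I=(A/I)^\times/\Fp^\times\).
\end{itemize}

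Before this step I must also check that the index-shift quotient in rank one introduces no further identification. Since \(d_{\mathcal B}=1\), the shift is absorbed by the choice of normalization in the equivalence with Drinfeld modules (\cite[\S3]{Pap09}).

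\emph{Step 2: the Frobenius action.} Since \(\phi\) is defined over \(\Fp\), Frobenius sends \((\phi,\iota)\) to \((\phi,\sigma\circ\iota)\). On \(\phi[I](\Fbar)\), the \(p\)-power map \(x\mapsto x^p\) is exactly \(\tau=\phi_T\). So Frobenius composes \(\iota\) with multiplication by \(T\bmod I\) in the \(A/I\)-module structure. This is a translation of the \(R_I\)-torsor by the class of \(T^{\pm1}\), and it commutes with the \(R_I\)-action as it must.

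\emph{Main obstacle: the sign of the exponent.} This is a matter of conventions.

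\begin{itemize}
\item The action of \(\Frob\) on geometric points, namely precomposition with \(\operatorname{Spec}\) of a field automorphism, is contravariant in the coordinates.
\item The paper uses the arithmetic Frobenius with left fundamental-group actions.
\item The LRS level structure \(\iota\colon\mathcal O_C|_I\boxtimes\mathcal O_S\to\mathcal E_I\) with \(t_I\circ\tau(\iota)=\iota\) sits on the sheaf side. Under the Drinfeld–sheaf dictionary, this side is dual, or inverse, to the torsion of the Drinfeld module.
\end{itemize}

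I will fix these conventions by working directly with the sheaf-side condition \(t_I\circ\tau(\iota)=\iota\) on the fiber. The Frobenius twist replaces \(\iota\) by \(\tau(\iota)\), which equals \(t_I^{-1}\circ\iota\). At characteristic \((T)\), the map \(t_I\) on \(\mathcal E_I\) acts as multiplication by \(T\), just as \(\tau\) acts as \(\phi_T\) on the torsion. This should yield right translation by \((T\bmod I)^{-1}\), as claimed.

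As a consistency check, I will compare the result with Drinfeld's class-field-theoretic description (\cite[\S8]{Dri74}). There the geometric Frobenius at \((T)\) corresponds to the uniformizer \(T\) under the reciprocity map, so the arithmetic Frobenius acts through \(T^{-1}\).
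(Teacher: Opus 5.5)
Your argument is correct in substance. Its decisive step is exactly the paper's computation: arithmetic Frobenius replaces \(\iota\) by \(\tau(\iota)=t_I^{-1}\circ\iota\), where \(t_I\) is multiplication by \(T\) on a model defined over \(\Fp\).

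The route differs in that you describe the points on the Drinfeld-module side, while the paper never leaves the elliptic-sheaf side. The paper normalizes \(\mathcal E_i=\mathcal O(i\infty)\) on \(C_{\Fbar}\), writes \(t_i=cT\), and rescales by \(u\) with \(u^{p-1}=c\) to reach \(t_i=T\). This is the sheaf-side normalization that you only assert by analogy with \(\phi_T=\tau\). A level structure is then a unit \(b=\iota(1)\in(A/I\otimes_{\Fp}\Fbar)^\times\) with \((T\bmod I)\,\sigma(b)=b\). That one equation yields both halves of the lemma:
\begin{enumerate}
\item the ratio of two solutions is \(\sigma\)-fixed, hence lies in \((A/I)^\times\), which gives the torsor;
\item \(\sigma(b)=(T\bmod I)^{-1}b\) gives the Frobenius formula.
\end{enumerate}
The advantage is that the torsor structure and the Frobenius computation refer to the same action \(\iota\cdot g=\iota\circ L_g\) of Definition~\ref{def:moduli-scheme}, so there is no sign to track.

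In your hybrid, Step~1 produces a torsor for the action \(\iota\mapsto\iota\circ(\cdot g)\) on isomorphisms \(A/I\xrightarrow{\sim}\phi[I](\Fbar)\). Relative to that action, your Step~2 gives right translation by \(T\bmod I\) itself, not an ambiguous \(T^{\pm1}\). This is not a contradiction. A sheaf-side level structure trivializes the \(t_I\tau\)-fixed part of \(\mathcal E_I\), which is \(A/I\)-dual to \(\phi[I]\), so the dictionary intertwines \(g\) with \(g^{-1}\). However, if you keep the Drinfeld-module count, you must state and verify this inversion. It is cleaner to run Step~1 on the sheaf side as well, after which the Drinfeld detour is unnecessary.

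For the same reason, your class-field-theoretic check is not independent evidence for the sign. With torsion-side level structures, arithmetic Frobenius at \((T)\) acts on the reduced Carlitz torsion through \(\phi_T\), i.e.\ by \(T\). It returns \(T^{-1}\) only after the same dualization.
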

	
	\begin{proof}
		Put \(L=\Fbar\) and \(C_L=C\times_{\Fp}L\). We first describe the
		underlying rank-one elliptic sheaf. After shifting the indices, we may
		assume that \(\deg\mathcal E_0=0\). Since every degree-zero line bundle
		on \(C_L=\mathbf P^1_L\) is trivial \cite[Lem.~31.29.5]{Stacks},
		periodicity allows us to write
		\[
		\mathcal E_i=\mathcal O_{C_L}(i\infty),
		\]
		with \(j_i\) the natural inclusions. The cokernel of \(t_i\) is supported
		at \((T)\), so \(t_i\) is multiplication by \(cT\) for some
		\(c\in L^\times\). Commutativity with the \(j_i\) makes \(c\)
		independent of \(i\). Since \(L\) is algebraically closed, choose
		\(u\in L^\times\) with \(u^{p-1}=c\). Multiplication by \(u\) on each
		\(\mathcal E_i\) commutes with the \(j_i\) and gives an isomorphism to
		the elliptic sheaf with \(t_i=T\), since \(u(cT)=Tu^p\).
		
		Thus, up to isomorphism and index shift, there is a single underlying
		elliptic sheaf, represented by
		\[
		\mathcal E_i=\mathcal O_{C_L}(i\infty),
		\qquad t_i=T.
		\]
		This model is defined over \(\Fp\). Its automorphisms are common scalar
		multiplications by \(u\in L^\times\) satisfying \(u^p=u\), and hence
		form the group \(\Fp^\times\).
		
		Put \(R=A/I\) and define
		\[
		\sigma
		:=\operatorname{id}_R\otimes\Frob
		=\operatorname{id}_R\otimes(a\mapsto a^p)
		\]
		on \(R\otimes_{\Fp}L\). In particular, \(\sigma\) fixes \(T\bmod I\).
		Restricting the normalized elliptic sheaf to \(I\), a full level
		structure is multiplication by a unit
		\[
		b=\iota(1)\in(R\otimes_{\Fp}L)^\times.
		\]
		The compatibility condition \(t_I\circ\tau(\iota)=\iota\) in
		Definition~\ref{def:moduli-scheme} is therefore
		\[
		(T\bmod I)\sigma(b)=b.
		\]
		
		Such level structures exist by
		\cite[Lem. (2.6) and Def. (2.7), pp.~225--226]{LRS93}. If \(b\)
		and \(b'\) define two of them, then
		\[
		\sigma(b'/b)=b'/b.
		\]
		Expanding in an \(\Fp\)-basis of \(R\), we see that the
		\(\sigma\)-fixed elements of \(R\otimes_{\Fp}L\) are exactly those of
		\(R\), since their coefficients satisfy \(a^p=a\). Both \(b'/b\) and
		its inverse are fixed, so \(b'/b\in R^\times\).
		Conversely, multiplication by any element of
		\(R^\times\) preserves the compatibility condition. The full level
		structures consequently form a right \(R^\times\)-torsor. Quotienting
		by the automorphism group \(\Fp^\times\) gives the asserted
		\(R_I\)-torsor.
		
		Since the normalized model is defined over \(\Fp\), arithmetic
		Frobenius sends \(b\) to \(\sigma(b)\). The compatibility condition gives
		\[
		\sigma(b)=(T\bmod I)^{-1}b.
		\]
		With the right action \(\iota\cdot a=\iota\circ L_a\), this says
		precisely that
		\[
		\Frob(\iota)=\iota\cdot(T\bmod I)^{-1}.
		\]
		The same formula descends to the quotient by \(\Fp^\times\).
	\end{proof}
	
	The irreducible fibers of the determinant morphism now carry the rank-one
	component calculation back to the quaternionic fiber.  This is the form that
	will be used to compute the monodromy of the level covers in Section~5.
	
	\begin{corollary}
		\label{cor:modular-components}
		Assume \((T)\notin\Ram(D)\).  For every nonempty finite level \(I\) with
		\((T)\nmid I\), the geometric component set
		\[
		\pi_0\!\left(M_{I,(T)}^D\otimes_{\Fp}\Fbar\right)
		\]
		is a torsor under \(R_I\).  The action of \(\mathcal D_I^\times\) on
		components is translation through
		\[
		\operatorname{Nrd}_I:\mathcal D_I^\times\longrightarrow(A/I)^\times
		\longrightarrow R_I,
		\]
		and arithmetic Frobenius acts by the class of \((T\bmod I)^{-1}\).
	\end{corollary}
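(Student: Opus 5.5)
The plan is to transport the rank-one description of Lemma~\ref{lem:rank-one} to the quaternionic fiber through the determinant morphism \(\wp_I\) of Proposition~\ref{prop:modular-input}(3), restricted to the closed point \((T)\). The hypotheses \((T)\notin\Ram(D)\) and \((T)\nmid I\) ensure that \((T)\in C'_I\), so both fibers \(M_{I,(T)}^D\) and \(M_{I,(T)}^{\mathcal O_C}\) are defined. Base change of \(\wp_I\) along \(\operatorname{Spec}\kappa((T))=\operatorname{Spec}\Fp\to C'_I\) gives a morphism
\[
\wp_{I,(T)}:M_{I,(T)}^D\longrightarrow M_{I,(T)}^{\mathcal O_C}.
\]
This morphism is surjective, since surjectivity is stable under base change. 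It is \(\mathcal D_I^\times\)-equivariant through \(\operatorname{Nrd}_I\) and defined over \(\Fp\), hence Frobenius-equivariant. By Proposition~\ref{prop:modular-input}(1), the source is a smooth projective curve over \(\Fp\); its geometric connected components coincide with its geometric irreducible components.

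The first step is to show that \(\wp_{I,(T)}\otimes\Fbar\) induces a bijection
\[
\pi_0\bigl(M_{I,(T)}^D\otimes\Fbar\bigr)\xrightarrow{\ \sim\ } M_{I,(T)}^{\mathcal O_C}(\Fbar).
\]
The target is a finite reduced set of points, because the rank-one space is finite étale over \(C'_I\). Each connected component of the source therefore maps to a single point. The fiber over a point \(x\) is a geometric fiber of \(\wp_I\), so it is irreducible, and in particular connected, by Proposition~\ref{prop:modular-input}(3). It is also nonempty by surjectivity. The fiber over \(x\) is open and closed in the source, being the preimage of an open and closed point. Hence it is exactly one connected component, and the map on \(\pi_0\) is a bijection.

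Equivariance then transfers both actions. Since \(\wp_I(zg)=\wp_I(z)\operatorname{Nrd}_I(g)\), the action of \(g\in\mathcal D_I^\times\) on components corresponds to the action of \(\operatorname{Nrd}_I(g)\in(A/I)^\times\) on rank-one level structures. By Lemma~\ref{lem:rank-one}, that action is right translation on the \(R_I\)-torsor, so it factors through \((A/I)^\times\to R_I\). Arithmetic Frobenius commutes with \(\wp_{I,(T)}\) because the morphism is defined over \(\Fp\). Lemma~\ref{lem:rank-one} then gives that Frobenius acts by translation by the class of \((T\bmod I)^{-1}\).

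The main point needing care is the compatibility of the action conventions. The action of \((A/I)^\times\) on the rank-one points through the level structure, \(\iota\mapsto\iota\circ L_a\), must agree with the \(R_I\)-torsor structure of Lemma~\ref{lem:rank-one}. This holds because that lemma identified the level structures as a right \(R^\times\)-torsor precisely via multiplication of \(b=\iota(1)\), and then passed to the quotient by the automorphism group \(\Fp^\times\), which commutes with this multiplication. One should also check that the Frobenius acting on \(\pi_0\) is the same one as in Lemma~\ref{lem:rank-one}, namely the one coming from \(\otimes_{\Fp}\Fbar\) on the second factor. This is immediate, since both are induced from the \(\Fp\)-structure of the respective fibers.
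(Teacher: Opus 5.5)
Your proposal is correct and follows essentially the same route as the paper. You base change \(\wp_I\) to the point \((T)\in C'_I\), observe that over \(\Fbar\) the rank-one target is a finite discrete set whose preimages are nonempty, irreducible, open and closed (hence exactly the connected components), and transport the \(\mathcal D_I^\times\)-action and the Frobenius action through the \(\operatorname{Nrd}_I\)-equivariance of Proposition~\ref{prop:modular-input}(3) and Lemma~\ref{lem:rank-one}, just as the paper does; your explicit checks of the action conventions and of the Galois-equivariance of the \(\Fp\)-morphism \(\wp_{I,(T)}\) are points the paper leaves implicit.
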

	
	\begin{proof}
		Since \((T)\notin\Ram(D)\) and \((T)\nmid I\), the point \((T)\) lies in
		\(C'_I\).  Base-changing \(\wp_I\) to \((T)\) therefore gives a morphism
		from \(M_{I,(T)}^D\) to \(M_{I,(T)}^{\mathcal O_C}\).  Its rank-one target
		is finite
		\'{e}tale at the prime-to-\((T)\) level \(I\), and
		Lemma~\ref{lem:rank-one} describes its geometric points.
		After base change to \(\Fbar\), the target is a finite disjoint union
		of points. By Proposition~\ref{prop:modular-input}(3), the inverse image
		of each point is nonempty and irreducible. These inverse images are open
		and closed in \(M_{I,(T)}^D\otimes_{\Fp}\Fbar\) and form a disjoint
		covering, so they are precisely its connected components.
		The equivariance and Frobenius formula now follow from
		Proposition~\ref{prop:modular-input}(3) and
		Lemma~\ref{lem:rank-one}.
	\end{proof}
	
	\section{A fixed curve and its auxiliary level torsors}
	
	We now choose a single quaternion algebra \(D\) and a single fiber that will
	work for every \(r\geq4\).  The algebra will be split at \((T)\), \((T-1)\),
	and \(\infty\), while all its ramification points will have degree at most
	three.  The first condition makes the fiber and levels used below available;
	the degree bound will make the choice of auxiliary primes uniform in \(r\).
	
	Choose two finite closed points
	\(\mathfrak p_1,\mathfrak p_2\) according to the table
	\[
	\begin{array}{c|c}
		p & (\mathfrak p_1,\mathfrak p_2) \\ \hline
		2 & \bigl((T^2+T+1),(T^3+T+1)\bigr),\\
		3 & \bigl((T-2),(T^2+1)\bigr),\\
		p\ge5 & \bigl((T-2),(T-3)\bigr).
	\end{array}
	\]
	In each row the two displayed polynomials are irreducible, define distinct
	points of degree at most three, and avoid \((T),(T-1),\infty\).  Assign
	local invariant \(1/2\) at these two points and \(0\) elsewhere.  Since
	\(1/2+1/2=0\) in \(\mathbf Q/\mathbf Z\), the
	Albert--Brauer--Hasse--Noether exact sequence gives a unique Brauer class
	with these invariants.  Its local indices are \(2\) at
	\(\mathfrak p_1,\mathfrak p_2\) and \(1\) elsewhere, so its global index is
	\(2\).  Its division representative is therefore a quaternion algebra
	\(D\) over \(\Fp(T)\), ramified exactly at \(\mathfrak p_1,\mathfrak p_2\) and split
	at \((T),(T-1),\infty\); see \cite[Rem. (32.12)(ii) and exact sequence
	(32.13), pp.~277--278, and Thm. (32.19), p.~280]{Rei03}.  Fix this \(D\)
	and a maximal order
	\(\mathcal D\) from now on.
	
	The level \((T-1)\) now has its intended effect: the corresponding rank-one
	component group is trivial, so the fiber over \((T)\) is geometrically
	connected.  This gives the curve on which the representations required by
	Theorem~\ref{thm:main} will first be constructed.
	
	\begin{proposition}\label{prop:base}
		The curve
		\[
		X:=M_{(T-1),(T)}^D
		\]
		is smooth, projective, and geometrically connected over \(\Fp\).
	\end{proposition}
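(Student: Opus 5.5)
We need to show that \(X=M^D_{(T-1),(T)}\) is smooth, projective and geometrically connected over \(\Fp\).

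\textbf{Setup.} Put \(I=(T-1)\). This is a nonempty finite level of degree one. It is disjoint from \(\Ram(D)=\{\mathfrak p_1,\mathfrak p_2\}\) and from \(\infty\), and its support does not contain \((T)\).

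\textbf{Step 1: the point \((T)\) is a closed point of \(C'_I\).} We have \((T)\neq(T-1)\). By the choice of \(D\), the point \((T)\) is not a ramification point, and it is not \(\infty\). Its residue field is \(\kappa((T))=\Fp\). Hence \(X\) is the fiber \(M^D_{I,(T)}\) of \eqref{eq:modular-fiber-notation}, viewed as a scheme over \(\Fp\).

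\textbf{Step 2: smoothness and projectivity.} By Proposition~\ref{prop:modular-input}(1), \(M_I^D\to C'_I\) is smooth and projective of pure relative dimension \(1\). Both properties are stable under base change. Base-changing along \(\operatorname{Spec}\Fp=\operatorname{Spec}\kappa((T))\to C'_I\) shows that \(X\) is a smooth projective scheme of pure dimension \(1\) over \(\Fp\).

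\textbf{Step 3: geometric connectedness.} Apply Corollary~\ref{cor:modular-components}, whose hypotheses hold: \((T)\notin\Ram(D)\) and \((T)\nmid I\). It says that \(\pi_0(X\otimes_{\Fp}\Fbar)\) is a torsor under
\[
R_I=(A/I)^\times/\Fp^\times.
\]
Evaluation at \(T=1\) gives \(A/(T-1)\cong\Fp\). So \((A/I)^\times=\Fp^\times\) and \(R_I\) is trivial. A torsor under the trivial group is a single point, so \(X_{\Fbar}\) is connected and \(X\) is geometrically connected. In particular it is nonempty, because torsors are nonempty. Connectedness together with smoothness makes \(X_{\Fbar}\) irreducible, by the same regular-local-ring argument used at the end of Proposition~\ref{prop:modular-input}. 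Hence \(X\) is a smooth projective geometrically connected curve over \(\Fp\).

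\textbf{Expected obstacle.} The only delicate point is bookkeeping: one must check that the fiber over \((T)\) is taken over the base \(C'_I\), so that Corollary~\ref{cor:modular-components} applies verbatim, and that the torsor statement there really includes nonemptiness. Nonemptiness comes from Lemma~\ref{lem:rank-one}, via the existence of level structures, combined with the surjectivity in Proposition~\ref{prop:modular-input}(3). Beyond this, the argument is routine.
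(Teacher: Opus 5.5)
Your proposal is correct and follows the paper's proof essentially step for step. Smoothness and projectivity come from Proposition~\ref{prop:modular-input}(1) by base change to the fiber over \((T)\), and geometric connectedness comes from Corollary~\ref{cor:modular-components} once \(R_{(T-1)}=\Fp^\times/\Fp^\times=1\); your checks that \((T)\notin\Ram(D)\), that \((T)\nmid(T-1)\), and that the fiber is nonempty are bookkeeping the paper leaves implicit.
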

	
	\begin{proof}
		Part (1) of Proposition~\ref{prop:modular-input} gives smoothness,
		projectivity, and pure
		dimension one.  Moreover,
		\[
		R_{(T-1)}=(A/(T-1))^\times/\Fp^\times
		=\Fp^\times/\Fp^\times=1.
		\]
		Corollary~\ref{cor:modular-components} therefore says that
		\(X_{\Fbar}\) has one geometrically irreducible component.
	\end{proof}
	
	To prove Theorem~\ref{thm:main}, we first construct two-dimensional
	representations on the fixed curve \(X\). These arise from the monodromy
	of finite \'{e}tale torsors obtained by adding an auxiliary prime to the
	level and then forgetting it. Theorem~\ref{thm:level-components} will
	determine their geometric images and determinants. In Section~6, a
	suitable choice of the auxiliary prime will then allow us to remove the
	determinant by a scalar twist and obtain the required representations.
	
	Let \(\ell\subset A\) be a nonzero prime ideal disjoint from
	\[
	\{(T),(T-1)\}\cup\Ram(D).
	\]
	Put
	\[
	k_\ell=A/\ell.
	\]
	Proposition~\ref{prop:modular-input}(2), applied to the levels \((T-1)\)
	and \((T-1)\ell\) and then to the fiber over \((T)\), gives a finite
	\'{e}tale right \(G_\ell\)-torsor
	\begin{equation}\label{eq:level-torsor}
		Z_\ell:=M_{(T-1)\ell,(T)}^D
		\longrightarrow M_{(T-1),(T)}^D=X,
	\end{equation}
	where
	\[
	G_\ell
	:=
	\ker\!\left(\mathcal D_{(T-1)\ell}^{\times}
	\longrightarrow\mathcal D_{(T-1)}^\times\right)
	\cong\mathcal D_\ell^\times.
	\]
	Indeed, the Chinese remainder theorem identifies the restriction
	homomorphism with projection onto the \((T-1)\)-factor. Since \(\ell\)
	lies outside \(\Ram(D)\), choose a splitting
	\[
	\mathcal D_\ell\simeq\mathrm{M}_2(k_\ell).
	\]
	This identifies \(G_\ell\) with \(\GL_2(k_\ell)\) and the reduced norm
	with the determinant.
	Choose a geometric base point \(\bar x\to X\) and a point
	\(\bar z\in Z_{\ell,\bar x}\), and
	let
	\[
	\theta_\ell:\pi_1^{\et}(X)\longrightarrow\GL_2(k_\ell)
	\]
	be the resulting monodromy homomorphism.
	
	The geometry of Section~4 now determines the two facts needed in the proof
	of Theorem~\ref{thm:main}: the geometric image of \(\theta_\ell\) and the
	determinant of arithmetic Frobenius.
	
	\begin{theorem}\label{thm:level-components}
		For the torsor \eqref{eq:level-torsor}:
		\begin{enumerate}
			\item the geometric component set of \(Z_\ell\) is a torsor under
			\(k_\ell^\times\);
			\item the right \(\GL_2(k_\ell)\)-action on \(Z_\ell\) induces
			translation on the component torsor through the determinant;
			\item
			\begin{equation}\label{eq:geo-image}
				\theta_\ell\bigl(\pi_1^{\et}(X_{\Fbar})\bigr)
				=\SL_2(k_\ell);
			\end{equation}
			\item The character \(\delta_\ell=\det\theta_\ell\) factors through
			\(\Gal(\Fbar/\Fp)\). Denoting the resulting character by the same
			symbol, we have
			\begin{equation}\label{eq:frob-det}
				\delta_\ell(\Frob)=(T\bmod\ell)^{-1}.
			\end{equation}
		\end{enumerate}
	\end{theorem}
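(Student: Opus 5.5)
We apply Corollary~\ref{cor:modular-components} to the level \(I=(T-1)\ell\), which avoids \((T)\) and \(\Ram(D)\) because \(\ell\) is disjoint from \(\{(T),(T-1)\}\cup\Ram(D)\). It tells us that \(\pi_0(Z_\ell\otimes\Fbar)\) is a torsor under
\[
R_{(T-1)\ell}=(A/(T-1)\ell)^\times/\Fp^\times .
\]
By the Chinese remainder theorem, \((A/(T-1)\ell)^\times\cong\Fp^\times\times k_\ell^\times\), where the first factor is \((A/(T-1))^\times=\Fp^\times\). The diagonal \(\Fp^\times\) maps isomorphically onto this first factor, so the composite \(k_\ell^\times\hookrightarrow(A/(T-1)\ell)^\times\to R_{(T-1)\ell}\) is an isomorphism; it is injective and both sides have order \(|k_\ell^\times|\). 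This proves (1).

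For (2), note that \(G_\ell\) is the subgroup of \(\mathcal D_{(T-1)\ell}^\times\) whose \((T-1)\)-component is \(1\). Hence \(\operatorname{Nrd}_{(T-1)\ell}\) restricted to \(G_\ell\) has image \((1,\det g)\) under the CRT splitting and chosen identification \(\mathcal D_\ell\simeq\mathrm M_2(k_\ell)\). Corollary~\ref{cor:modular-components} then shows that \(G_\ell\) acts on components by translation through
\[
\nu:\GL_2(k_\ell)\xrightarrow{\det}k_\ell^\times\xrightarrow{\sim}R_{(T-1)\ell}.
\]
Since \(\det\) is surjective, so is \(\nu\), and \(\ker\nu=\SL_2(k_\ell)\). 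Corollary~\ref{cor:stabilizer} applied to the torsor \(Z_\ell\to X\) gives (3). One point to check here is that the right action used in Proposition~\ref{prop:modular-input}(2) is compatible with the action on \(\pi_0\) in the corollary. Both are the restriction of the \(\mathcal D_{(T-1)\ell}^\times\)-action on \(M^D_{(T-1)\ell}\), so this is immediate.

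For (4), the character \(\det\theta_\ell\) is trivial on \(\pi_1^{\et}(X_{\Fbar})\) by (3), so it factors through \(\Gal(\Fbar/\Fp)\) by \eqref{eq:fundamental-exact-sequence}. To compute its value, take a lift \(F\) of \(\Frob\). By the definition of \(\theta_\ell\),
\[
F(\bar z)=\bar z\,\theta_\ell(F).
\]
Passing to components, the component containing \(F(\bar z)\) is the Frobenius image of the component of \(\bar z\). This is because Frobenius acting on \(\pi_0(Z_{\ell,\Fbar})\) is induced by the Galois action on geometric points compatibly with the \(\pi_1\)-action on fibers. By Corollary~\ref{cor:modular-components}, that component is the component of \(\bar z\) translated by the class of \((T\bmod (T-1)\ell)^{-1}\). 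On the other hand, by (2) it is the component of \(\bar z\) translated by \(\nu(\theta_\ell(F))\). Since \(\pi_0\) is a torsor, the two translations agree:
\[
\nu(\theta_\ell(F))=[(T\bmod (T-1)\ell)^{-1}].
\]
Under CRT, \(T\bmod(T-1)\ell\) corresponds to \((1,\,T\bmod\ell)\), because \(T\equiv1\bmod (T-1)\). The inverse of our isomorphism \(k_\ell^\times\cong R_{(T-1)\ell}\) therefore sends this class to \((T\bmod\ell)^{-1}\), which gives \eqref{eq:frob-det}.

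The main obstacle is this last step: matching the Frobenius action on components with the monodromy action on the fiber, including the conventions for left and right actions and for arithmetic versus geometric Frobenius. Our plan is to work through Lemma~\ref{lem:components}. Identify \(\pi_0\) with \(H\backslash G\), so that \(\pi_1^{\et}(X)\) acts on \(\pi_0\) via left multiplication by \(\theta\). Since \(G/H\) is abelian here, this left action equals right translation by the image of \(\theta(F)\), and so the comparison reduces to the Galois-equivariance of the identification of \(\pi_0(Z_{\ell,\Fbar})\) with \(\pi_1\)-orbits. The derivation above never uses that \(G/H\) is abelian, because it compares the two component translations directly, and the torsor structure alone forces them to agree.
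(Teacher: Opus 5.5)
Your proof is correct and follows the paper's argument: the same CRT identification (your $b\mapsto[(1,b)]$ is the inverse of the paper's $[(a,b)]\mapsto a^{-1}b$), the same reduced-norm computation $(1,\det g)$, Corollary~\ref{cor:stabilizer} for (3), and, for (4), a comparison of two descriptions of the Frobenius translation on a free torsor. The only difference is cosmetic: in (4) you read the Frobenius action directly off $\pi_0$ via Corollary~\ref{cor:modular-components}, whereas the paper justifies the compatibility between the $\pi_1$-action and the Galois action by mapping $Z_\ell$, through the determinant morphism, to the constant finite \'{e}tale $X$-scheme $X\times_{\Fp}M^{\mathcal O_C}_{(T-1)\ell,(T)}$, whose fibers over its geometric points are exactly those components.
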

	
	\begin{proof}
		(1) We first determine the component set.  Corollary~\ref{cor:modular-components}
		makes it a torsor
		under
		\[
		R_{(T-1)\ell}
		=
		\bigl(A/((T-1)\ell)\bigr)^\times/\Fp^\times.
		\]
		The Chinese remainder theorem gives a canonical isomorphism of acting
		groups
		\begin{equation}\label{eq:crt-ray}
			R_{(T-1)\ell}
			\cong
			\frac{\Fp^\times\times k_\ell^\times}
			{\Delta\Fp^\times}
			\xrightarrow{\ \sim\ } k_\ell^\times,
			\qquad
			[(a,b)]\longmapsto a^{-1}b.
		\end{equation}
		This identifies the component set with a \(k_\ell^\times\)-torsor;
		it does
		not choose an origin in that torsor.
		
		\smallskip\noindent
		(2) To determine the geometric image in (3), we first need to know
		which elements of \(\GL_2(k_\ell)\) stabilize a geometric component.
		Under the Chinese remainder isomorphism and the chosen splitting,
		an element \(g\in\GL_2(k_\ell)\) of the torsor group corresponds to
		\[
		(1,g)\in
		\mathcal D_{(T-1)}^\times\times\GL_2(k_\ell).
		\]
		The reduced norm is computed separately on the two factors and
		agrees with the determinant on the second. Thus, under these
		identifications,
		\[
		\operatorname{Nrd}_{(T-1)\ell}(1,g)
		=(1,\det g)\in\Fp^\times\times k_\ell^\times.
		\]
		By Proposition~\ref{prop:modular-input}(3), the induced action on
		the component torsor is therefore translation by the class
		\([(1,\det g)]\). Under \eqref{eq:crt-ray}, this class maps to
		\(\det g\), proving (2).
		
		\smallskip\noindent
		(3) Because \(k_\ell^\times\) acts freely on the
		component set, \(g\) fixes a component if and only if \(\det g=1\).
		Since the determinant is surjective,
		Corollary~\ref{cor:stabilizer} gives
		\[
		\theta_\ell\bigl(\pi_1^{\et}(X_{\Fbar})\bigr)
		=
		\ker\!\left(\det:\GL_2(k_\ell)\to k_\ell^\times\right)
		=
		\SL_2(k_\ell).
		\]
		
		\smallskip\noindent
		(4) To remove the determinant without changing the geometric image, we will
		use a character of the absolute Galois group of the constant field
		\(\Fp\), as in Lemma~\ref{lem:twist}. By (3), \(\delta_\ell\) is trivial on
		\(\pi_1^{\et}(X_{\Fbar})\), so the fundamental-group exact sequence
		\eqref{eq:fundamental-exact-sequence} shows that it factors through
		\(\Gal(\Fbar/\Fp)\). Its value at arithmetic Frobenius can therefore be
		computed using any lift to \(\pi_1^{\et}(X)\): two such lifts differ by an
		element of the geometric fundamental group.
		
		We now compute this value on the rank-one moduli space, where the
		Frobenius action is given explicitly by Lemma~\ref{lem:rank-one}. Write
		\[
		\wp_{(T-1)\ell,(T)}:
		Z_\ell\longrightarrow M^{\mathcal O_C}_{(T-1)\ell,(T)}
		\]
		for the base change of the determinant morphism to the fiber over
		\((T)\). Its target is finite \'{e}tale over \(\Fp\). Together with the
		projection to \(X\), it gives a morphism of finite \'{e}tale
		\(X\)-schemes
		\[
		Z_\ell\longrightarrow
		X\times_{\Fp}M^{\mathcal O_C}_{(T-1)\ell,(T)}.
		\]
		
		Let \(d:=\wp_{(T-1)\ell,(T)}(\bar z)\), viewed as a point of the
		geometric fiber of \(M^{\mathcal O_C}_{(T-1)\ell,(T)}\) determined by
		\(\bar x\). Choose a lift \(\gamma\in\pi_1^{\et}(X)\) of arithmetic
		Frobenius. The induced map on geometric fibers is
		\(\pi_1^{\et}(X)\)-equivariant. Since the target cover is pulled back from
		\(\operatorname{Spec}\Fp\), its fundamental-group action is induced by
		the natural Galois action on \(M^{\mathcal O_C}_{(T-1)\ell,(T)}\); see
		\cite[Thm.~58.6.2(1), (3)]{Stacks}.
		
		The monodromy convention of Lemma~\ref{lem:components} and determinant
		equivariance therefore give
		\[
		\begin{aligned}
			\Frob(d)
			&=\wp_{(T-1)\ell,(T)}\bigl(\gamma(\bar z)\bigr)\\
			&=\wp_{(T-1)\ell,(T)}\bigl(\bar z\,\theta_\ell(\gamma)\bigr)\\
			&=d\cdot\delta_\ell(\gamma),
		\end{aligned}
		\]
		where the last action uses the identification \eqref{eq:crt-ray}.
		
		On the other hand, since \(T\equiv1\bmod(T-1)\), the Chinese remainder
		isomorphism sends \((T\bmod((T-1)\ell))^{-1}\) to
		\[
		\bigl(1,(T\bmod\ell)^{-1}\bigr).
		\]
		Lemma~\ref{lem:rank-one} and the identification \eqref{eq:crt-ray}
		therefore give
		\[
		\Frob(d)=d\cdot(T\bmod\ell)^{-1}.
		\]
		Comparing the two expressions for \(\Frob(d)\), and using the freeness
		of the \(k_\ell^\times\)-action, we obtain
		\[
		\delta_\ell(\Frob)
		=\delta_\ell(\gamma)
		=(T\bmod\ell)^{-1},
		\]
		as required.
	\end{proof}
	
	Theorem~\ref{thm:level-components} leaves one choice to make.  To apply the
	scalar twist of Lemma~\ref{lem:twist} for every \(r\geq4\), we need a
	degree-\(r\) prime \(\ell\) for which \(T\bmod\ell\) is a square.  The
	degree bound on the fixed ramification set makes this possible uniformly.
	
	\begin{lemma}\label{lem:square-primes}
		Every point of
		\[
		\{(T),(T-1)\}\cup\Ram(D)
		\]
		has degree at most \(3\). Moreover, for every integer \(r\geq4\),
		there is a prime ideal \(\ell_r\subset A\), disjoint from this set, such
		that
		\[
		\deg(\ell_r)=r,\qquad
		T\bmod\ell_r\in
		\bigl(\mathbf F_{p^r}^{\times}\bigr)^2.
		\]
	\end{lemma}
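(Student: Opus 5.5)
The first assertion is read off from the choice of \(D\). The points \((T)\) and \((T-1)\) have degree \(1\). By construction, \(\Ram(D)=\{\mathfrak p_1,\mathfrak p_2\}\), and the table shows that \(\mathfrak p_1,\mathfrak p_2\) are defined by irreducible polynomials of degree at most \(3\). Consequently, every prime ideal of degree \(r\geq4\) is automatically disjoint from the excluded set, since distinct closed points of different degrees cannot coincide. It therefore suffices to produce, for each \(r\ge4\), a monic irreducible \(f\in\Fp[T]\) of degree \(r\) such that \(T\) is a square in \(\Fp[T]/(f)\cong\mathbf F_{p^r}\).

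For \(p=2\) there is nothing to do: every element of \(\mathbf F_{2^r}^\times\) is a square, because squaring is bijective. So any irreducible polynomial of degree \(r\) other than \(T\) works, and such polynomials exist for every \(r\ge1\).

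For odd \(p\), the plan is to reverse the roles: choose the element first and the prime afterwards. An irreducible \(f\) of degree \(r\) with \(T\bmod f\) a square corresponds to a generator \(\alpha\) of \(\mathbf F_{p^r}\) over \(\Fp\) that is a square in \(\mathbf F_{p^r}^\times\). Indeed, take \(f\) to be the minimal polynomial of \(\alpha\); then \(T\bmod f\mapsto\alpha\) under an isomorphism \(\Fp[T]/(f)\cong\mathbf F_{p^r}\). So we need \(\beta\in\mathbf F_{p^r}^\times\) such that \(\alpha=\beta^2\) does not lie in any proper subfield. The simplest route is to take \(\beta\) to be a generator of the cyclic group \(\mathbf F_{p^r}^\times\) and set \(\alpha=\beta^2\). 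Then \(\alpha\) has order \((p^r-1)/2\). A proper subfield \(\mathbf F_{p^s}\) with \(s\mid r\), \(s<r\), has multiplicative group of order \(p^s-1\le p^{r/2}-1\). For \(r\ge2\) we have \((p^r-1)/2>p^{r/2}-1\), since \(p^r-2p^{r/2}+1=(p^{r/2}-1)^2>0\). Hence \(\alpha\) lies in no proper subfield, so \(\deg f=r\), and \(\ell_r=(f)\) is the required prime.

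The only step needing care is this order comparison. It is elementary, and the bound \(r\ge4\) is used only to guarantee disjointness from the excluded points of degree at most \(3\). Alternatively, one could count: the squares number \((p^r-1)/2\), while the elements lying in proper subfields number at most \(\sum_{s\mid r,\,s<r}p^s\le 2p^{r/2}\), which is smaller. The explicit-generator argument avoids even this estimate.
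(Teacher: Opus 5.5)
Your proposal is correct and follows essentially the same route as the paper: take a generator $\beta$ of $\mathbf F_{p^r}^\times$, set $\alpha=\beta^2$, compare its order with $p^{r/2}-1$ to rule out proper subfields, and let $\ell_r$ be generated by the minimal polynomial of $\alpha$, with disjointness coming from the degree bound $r\geq 4$. The only difference is cosmetic: you treat $p=2$ separately, whereas the paper handles both parities at once via $\operatorname{ord}(\alpha)=(p^r-1)/\gcd(2,p^r-1)\geq (p^r-1)/2$.
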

	
	\begin{proof}
		The points \((T)\) and \((T-1)\) have degree \(1\), while the two points
		in \(\Ram(D)\) have degree at most \(3\) by the choice of \(D\).
		
		Fix \(r\geq4\). Choose a generator
		\(\beta\in\mathbf F_{p^r}^{\times}\), and put
		\(\alpha=\beta^2\).  The element \(\alpha\) is already a square; we only
		need to show that it still generates \(\mathbf F_{p^r}\) over \(\Fp\).  Its
		order is
		\[
		\operatorname{ord}(\alpha)
		=\frac{p^r-1}{\gcd(2,p^r-1)}
		\geq\frac{p^r-1}{2}.
		\]
		
		If \(\alpha\) belonged to a proper subfield
		\(\mathbf F_{p^d}\subsetneq\mathbf F_{p^r}\), then \(d\mid r\), hence
		\(d\leq r/2\), and therefore
		\[
		\operatorname{ord}(\alpha)
		\leq p^d-1
		\leq p^{r/2}-1
		<\frac{p^r-1}{2},
		\]
		contradicting the preceding inequality. Thus
		\(\Fp(\alpha)=\mathbf F_{p^r}\).
		
		Let \(f_r\) be the minimal polynomial of \(\alpha\) over \(\Fp\) and set
		\(\ell_r=(f_r)\). Then \(\deg(\ell_r)=r\), and under the isomorphism
		\[
		A/\ell_r\xrightarrow{\ \sim\ }\mathbf F_{p^r},
		\qquad T\longmapsto\alpha,
		\]
		the class \(T\bmod\ell_r=\beta^2\) is a square. Since \(\ell_r\) has
		degree \(r\geq4\), whereas every forbidden point has degree at most
		\(3\), it is disjoint from \(\{(T),(T-1)\}\cup\Ram(D)\).
	\end{proof}
	
	\begin{remark}\label{rem:equal-characteristic}
		The role of equal characteristic is quite concrete.  Write
		\(p'=\operatorname{char}K\), and let \(p\) be the characteristic of the
		coefficient field.  When \(p=p'\), a degree-\(r\) prime
		\(\ell\subset\Fp[T]\) has residue field
		\[
		k_\ell=A/\ell\cong\mathbf F_{p^r},
		\]
		which embeds in \(\Fbar\).  The natural two-dimensional representation of
		the level group can therefore be used directly as a mod-\(p\)
		representation.
		
		The same coincidence of characteristics appears in the Frobenius
		calculation. In Lemma~\ref{lem:rank-one}, arithmetic Frobenius acts on
		the coefficients of a level structure by \(a\mapsto a^p\). Compatibility
		with the map \(t_I\), which is multiplication by \(T\) in the normalized
		rank-one elliptic sheaf, gives
		\[
		\Frob(\iota)=\iota\cdot(T\bmod I)^{-1}.
		\]
		Theorem~\ref{thm:level-components}(4) then gives
		\[
		\delta_\ell(\Frob)=(T\bmod\ell)^{-1}.
		\]
		From this point on, the argument no longer uses the equality \(p=p'\).
		Identifying the geometric monodromy with the stabilizer
		\(\SL_2(k_\ell)\), choosing \(\ell\) so that \(T\bmod\ell\) is a square,
		and removing the determinant by a scalar twist are finite-field and
		group-theoretic arguments.
		
		This also explains why our construction does not conflict with the result
		for unequal characteristics recalled in the introduction.  If \(p'\neq p\),
		the corresponding level construction over \(\mathbf F_{p'}\) has residue
		fields of characteristic \(p'\), and its natural representation takes
		values in \(\GL_2(\overline{\mathbf F}_{p'})\).  It does not produce the
		mod-\(p\) representations considered in
		Conjecture~\ref{MTfinitenessconj}.  For those representations, the
		cross-characteristic theorem \cite[Thm.~1.3, p.~2]{Luo26a} proves finiteness in
		arbitrary dimension when \(p\neq2\), and in dimension \(2\) when \(p=2\).
		Thus the essential point in equal characteristic is that the same prime
		governs both the level coefficients and the Frobenius action; the
		component-stabilizer argument and the scalar twist themselves do not depend
		on this coincidence.
	\end{remark}
	
	\section{Proof of Theorem~\ref{thm:main}}
	
	The proof has two stages.  We first remain on the fixed curve \(X\):
	Theorem~\ref{thm:level-components} determines the geometric image and the
	Frobenius determinant, Lemma~\ref{lem:square-primes} makes that determinant a
	square, and Lemmas~\ref{lem:twist} and \ref{lem:natural-irred} produce the
	required absolutely irreducible representations.  We then apply
	Proposition~\ref{prop:monodromy-towers} to carry all these representations
	simultaneously to finite \'{e}tale covers of strictly increasing genus.
	
	\begin{proof}[Proof of Theorem~\ref{thm:main}]
		Fix a prime \(p\), and let
		\[
		X=M_{(T-1),(T)}^D
		\]
		be the curve constructed in Proposition~\ref{prop:base}.  We first construct
		the required representations on this fixed curve.  Given \(r\geq 4\), choose
		\(\ell_r\) as in Lemma~\ref{lem:square-primes}, put
		\[
		k_r=A/\ell_r\cong\mathbf F_{p^r},
		\]
		and fix an \(\Fp\)-embedding \(k_r\hookrightarrow\Fbar\).
		
		The finite \'{e}tale level torsor \(Z_{\ell_r}\to X\) defines a
		representation
		\[
		\theta_r:\pi_1^{\et}(X)\longrightarrow\GL_2(k_r).
		\]
		Theorem~\ref{thm:level-components} gives
		\[
		\theta_r\bigl(\pi_1^{\et}(X_{\Fbar})\bigr)=\SL_2(k_r)
		\]
		and, if \(\delta_r=\det\theta_r\),
		\[
		\delta_r(\Frob)=(T\bmod\ell_r)^{-1}.
		\]
		The choice of \(\ell_r\) ensures that \(T\bmod\ell_r\) is a square in
		\(k_r^\times\), and hence so is its inverse.
		Lemma~\ref{lem:twist} therefore supplies a continuous
		character
		\[
		\chi_r:\pi_1^{\et}(X)\longrightarrow k_r^\times,
		\]
		for which the representation
		\[
		\rho_r(\gamma)=\chi_r(\gamma)I_2\theta_r(\gamma)
		\qquad(\gamma\in\pi_1^{\et}(X)),
		\]
		has trivial determinant.  The character \(\chi_r\) is trivial on
		\(\pi_1^{\et}(X_{\Fbar})\), so the twist does not change the geometric image:
		\[
		\rho_r\bigl(\pi_1^{\et}(X_{\Fbar})\bigr)=\SL_2(k_r).
		\]
		On the other hand, \(\det\rho_r=1\), so the arithmetic image is contained in
		\(\SL_2(k_r)\).  Since it contains the geometric image, we obtain
		\[
		\rho_r\bigl(\pi_1^{\et}(X_{\Fbar})\bigr)
		=
		\rho_r\bigl(\pi_1^{\et}(X)\bigr)
		=
		\SL_2(k_r).
		\]
		Using the chosen embedding \(k_r\hookrightarrow\Fbar\), we regard \(\rho_r\)
		as a continuous geometric representation into \(\GL_2(\Fbar)\).  It is absolutely
		irreducible by Lemma~\ref{lem:natural-irred}.
		
		We now pass from the fixed curve \(X\) to a single tower that works for
		every \(r\geq4\).  Two facts are needed: the inequality \(g(X)\geq2\)
		ensures that the curves in the tower are geometrically distinct, while
		the perfectness of \(\SL_2(\mathbf F_{p^r})\) ensures that their monodromy
		groups remain unchanged.
		
		First, taking \(r=4\) above gives the nonabelian quotient
		\(\SL_2(\mathbf F_{p^4})\) of \(\pi_1^{\et}(X_{\Fbar})\).  Over an algebraically
		closed field, a smooth projective curve of genus zero has trivial
		\'{e}tale fundamental group: the Riemann--Hurwitz formula forces every
		connected finite \'{e}tale cover to have degree one
		\cite[Lem.~53.12.2]{Stacks}. A curve of genus one becomes an elliptic
		curve after a base point is chosen. Every connected finite \'{e}tale
		cover then comes from a separable isogeny, whose deck transformations
		are translations by points of its kernel. Its \'{e}tale fundamental
		group is therefore abelian
		\cite[\S18, Serre--Lang theorem and the ensuing discussion,
		pp.~155--158]{Mumford08}. Neither case admits the preceding quotient. Hence
		\(g(X)\geq2\).
		
		We next recall why the groups that occur here are perfect.  More
		generally, let \(q>3\), choose \(a\in\mathbf F_q^\times\) with
		\(a^2\neq1\), and write
		\[
		d_a=
		\begin{pmatrix}
			a&0\\
			0&a^{-1}
		\end{pmatrix},
		\qquad
		u_+(x)=
		\begin{pmatrix}
			1&x\\
			0&1
		\end{pmatrix},
		\qquad
		u_-(x)=
		\begin{pmatrix}
			1&0\\
			x&1
		\end{pmatrix}.
		\]
		With the convention \([g,h]=ghg^{-1}h^{-1}\), direct calculation gives
		\[
		[d_a,u_+(x)]
		=
		u_+\bigl((a^2-1)x\bigr),
		\qquad
		[d_a,u_-(x)]
		=
		u_-\bigl((a^{-2}-1)x\bigr).
		\]
		Both coefficients are nonzero.  Hence every upper and lower unipotent
		matrix is a commutator.  Since these matrices generate
		\(\SL_2(\mathbf F_q)\) by elementary row operations
		\cite[\S3.3.2, pp.~45--46]{Wilson09}, this group is perfect whenever \(q>3\), and in
		particular when \(q=p^r\) with \(r\geq4\).
		
		Choose a rational prime \(\lambda\) different from \(p\), and let
		\[
		X_j\longrightarrow X,\qquad j\geq1,
		\]
		be the tower supplied by Proposition~\ref{prop:monodromy-towers}.
		Since \(g(X)\geq2\), part~(2) of that proposition shows that the curves
		\(X_j\) are pairwise geometrically nonisomorphic.
		
		After choosing compatible geometric base points, define
		\[
		\rho_{j,r}
		:=
		\rho_r|_{\pi_1^{\et}(X_j)}.
		\]
		For every \(r\geq4\), the group \(\SL_2(\mathbf F_{p^r})\) is perfect
		and the restriction of \(\rho_r\) to \(\pi_1^{\et}(X_{\Fbar})\) is surjective.
		Part~(3) of Proposition~\ref{prop:monodromy-towers} therefore gives
		\[
		\rho_{j,r}\bigl(\pi_1^{\et}((X_j)_{\Fbar})\bigr)
		=
		\rho_{j,r}\bigl(\pi_1^{\et}(X_j)\bigr)
		=
		\SL_2(\mathbf F_{p^r}).
		\]
		The tower in Proposition~\ref{prop:monodromy-towers} depends only on
		\(X\) and \(\lambda\), so the same curves \(X_j\) work for every
		\(r\geq4\).  Finally, each \(\rho_{j,r}\) is geometric by the displayed
		equality and absolutely irreducible by Lemma~\ref{lem:natural-irred}.
		
		For any fixed $j$, the representations $\rho_{j,r}$ are pairwise nonisomorphic. Indeed, isomorphic
		representations have conjugate images and hence images of the same order, but
		\[
		\#\SL_2(\mathbf F_{p^r})=p^r\bigl(p^{2r}-1\bigr)
		\]
		is strictly increasing with \(r\). It follows that Conjecture~\ref{ass:naive} and hence
		Conjecture~\ref{MTfinitenessconj} fail for \(n=2\). This completes our proof.
	\end{proof}
	
	\section{Proof of Corollary~\ref{cor:infinite-product} and Corollary~\ref{cor:nonliftable-family}}
	\label{sec:infinite-product}
	
	To prove the first assertion of Corollary~\ref{cor:infinite-product},
	we combine the projectivizations of the representations in
	Theorem~\ref{thm:main} into a surjection onto
	\[
	\prod_{r\geq4}\PSL_2(\mathbf F_{p^r}).
	\]
	Surjectivity onto each factor is not sufficient: for a nontrivial
	finite group \(G\), the diagonal subgroup of \(G\times G\) projects
	onto both factors but is a proper subgroup.  The following lemma
	gives the required surjectivity when the factors are pairwise
	nonisomorphic finite nonabelian simple groups.
	
	\begin{lemma}
		\label{lem:independent-simple-quotients}
		Let \(\Gamma\) be a profinite group, let \((S_i)_{i\in I}\) be a family
		of pairwise nonisomorphic finite nonabelian simple groups, and suppose
		that, for every \(i\in I\), there is a continuous surjective homomorphism
		\[
		f_i:\Gamma\twoheadrightarrow S_i.
		\]
		Then the product homomorphism
		\[
		f=(f_i)_{i\in I}:\Gamma\longrightarrow\prod_{i\in I}S_i
		\]
		is surjective.
	\end{lemma}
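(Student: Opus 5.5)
We first reduce to finite subfamilies. The image \(f(\Gamma)\) is compact, hence closed in \(\prod_{i\in I}S_i\), and a closed subgroup of a product of finite groups is the whole product once its projection to \(\prod_{i\in F}S_i\) is surjective for every finite subset \(F\subset I\). Indeed, the sets \(\pi_F^{-1}(x)\) form a basis of the product topology, so this condition makes \(f(\Gamma)\) dense. It therefore suffices to prove that
\[
f_F=(f_i)_{i\in F}:\Gamma\longrightarrow\prod_{i\in F}S_i
\]
is surjective for each finite \(F\). Since each \(f_i\) is continuous with finite target, \(f_F\) factors through a finite quotient of \(\Gamma\). The problem is thus purely about finite groups.

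We argue by induction on \(|F|\). The case \(|F|=1\) is the hypothesis. For the inductive step, write \(F=F'\sqcup\{i_0\}\) and put \(P=\prod_{i\in F'}S_i\). Let \(H=f_F(\Gamma)\subset P\times S_{i_0}\). By induction, \(H\) projects onto \(P\), and by hypothesis it projects onto \(S_{i_0}\). We then apply Goursat's lemma. Set \(N_1=\{x\in P:(x,1)\in H\}\) and \(N_2=\{y\in S_{i_0}:(1,y)\in H\}\). These are normal subgroups of \(P\) and \(S_{i_0}\) respectively, and \(H\) induces an isomorphism \(P/N_1\cong S_{i_0}/N_2\). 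Since \(S_{i_0}\) is simple, either \(N_2=S_{i_0}\) or \(N_2=1\).

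If \(N_2=S_{i_0}\), then \(P/N_1\) is trivial, so \(N_1=P\). Hence \(H\supseteq N_1\times N_2=P\times S_{i_0}\), and we are done. Otherwise \(N_2=1\), and \(P/N_1\cong S_{i_0}\), so \(S_{i_0}\) would be a simple quotient of \(P=\prod_{i\in F'}S_i\).

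The main point, and the only place where the hypotheses on the \(S_i\) really enter, is to show that this second case is impossible. We invoke the standard fact that the normal subgroups of a finite product of nonabelian simple groups are exactly the subproducts \(\prod_{i\in J}S_i\) for \(J\subset F'\). To see this, let \(N\trianglelefteq P\) and suppose some \(n\in N\) has nontrivial \(i\)-th coordinate \(n_i\). Choose \(s\in S_i\) not commuting with \(n_i\), which exists because \(S_i\) has trivial center. The commutator \([n,s]\) then lies in \(N\cap S_i\) and is nontrivial, so normality and simplicity give \(S_i\subset N\).

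It follows that every simple quotient of \(P\) is isomorphic to some \(S_i\) with \(i\in F'\). This contradicts the assumption that the groups \(S_i\) are pairwise nonisomorphic. Therefore \(N_2=S_{i_0}\), which completes the induction and proves the lemma.
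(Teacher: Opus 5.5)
Your proof is correct and follows the same route as the paper: reduce to finite subproducts using density plus compactness of the image, then induct on the number of factors with a Goursat-type analysis of $N_1$ and $N_2$. The one small difference is how you rule out $S_{i_0}$ as a quotient of $P$: you classify the normal subgroups of $P$ using trivial centers, while the paper restricts an epimorphism $P\twoheadrightarrow S_{i_0}$ to a factor on which it is nontrivial and uses simplicity to get an isomorphism, a step that does not need nonabelianness.
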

	
	\begin{proof}
		We first show by induction that \(f\) maps surjectively onto every finite
		subproduct.  This is clear for a subproduct with one factor.  Suppose it
		has been proved for a finite subset
		\(J\subset I\), put \(A=\prod_{i\in J}S_i\), and take
		\(j\in I\setminus J\).  The image \(H\) of \(\Gamma\) in
		\(A\times S_j\) projects surjectively onto both factors.  Set
		\[
		N_A=\{a\in A:(a,1)\in H\},\qquad
		N_j=\{s\in S_j:(1,s)\in H\}.
		\]
		As the kernel of the projection \(H\to A\), the subgroup
		\(\{1\}\times N_j\) is normal in \(H\). Its image under the
		surjective projection \(H\to S_j\) is \(N_j\), so \(N_j\) is
		normal in \(S_j\).
		
		For \(a\in A\), choose \(s\in S_j\) with \((a,s)\in H\).
		The coset \(sN_j\) does not depend on this choice: if
		\((a,s')\in H\) as well, then \((1,s^{-1}s')\in H\), and hence
		\(sN_j=s'N_j\). Multiplication in \(H\) shows that the resulting map
		\[
		A\longrightarrow S_j/N_j,\qquad a\longmapsto sN_j
		\]
		is a homomorphism. It is surjective because \(H\to S_j\) is
		surjective. Its kernel is \(N_A\), since \(s\in N_j\) holds exactly
		when \((a,1)\in H\). The first isomorphism theorem therefore gives
		\[
		A/N_A\cong S_j/N_j.
		\]
		
		If \(H\ne A\times S_j\), then this common quotient is nontrivial.  Since
		\(S_j\) is simple, it follows that \(N_j=1\), and hence \(S_j\) is a
		quotient of \(A\).  On the other hand, every epimorphism
		\(A\twoheadrightarrow S_j\) restricts nontrivially to at least one
		factor \(S_i\).  The image of that factor is normal in \(S_j\), so the
		restriction is surjective; simplicity then makes it an isomorphism
		\(S_i\cong S_j\), contrary to the pairwise nonisomorphism assumption. Thus \(H=A\times S_j\), completing the induction.
		
		The preceding argument shows that \(f(\Gamma)\) projects surjectively
		onto every finite subproduct. A basic open set in
		\(\prod_{i\in I}S_i\) restricts only finitely many coordinates,
		so it meets \(f(\Gamma)\) whenever it is nonempty. Thus
		\(f(\Gamma)\) is dense.
		
		The map \(f\) is continuous because each \(f_i\) is continuous.
		Since \(\Gamma\) is compact, its image is compact and hence closed
		in the Hausdorff product \(\prod_{i\in I}S_i\).
		Being both dense and closed, \(f(\Gamma)\) is the whole product.
	\end{proof}
	
	\begin{proof}[Proof of Corollary~\ref{cor:infinite-product}]
		To obtain the simple quotients needed for
		Lemma~\ref{lem:independent-simple-quotients}, take the fixed curve
		\(X\) over \(\Fp\) and the representations \(\rho_r\), \(r\geq4\),
		constructed in the proof of Theorem~\ref{thm:main}, and put
		\(K=\Fp(X)\). Composing each \(\rho_r\) with the canonical quotient gives
		\[
		\overline\rho_r:\pi_1^{\et}(X)
		\xrightarrow{\rho_r}\SL_2(\mathbf F_{p^r})
		\twoheadrightarrow\PSL_2(\mathbf F_{p^r}).
		\]
		Since \(\rho_r\) is geometric with image \(\SL_2(\mathbf F_{p^r})\),
		the homomorphism \(\overline\rho_r\)
		and its restriction to \(\pi_1^{\et}(X_{\Fbar})\) are both surjective.
		For \(q>3\), the group \(\PSL_2(\mathbf F_q)\) is
		nonabelian simple, and
		\begin{equation}\label{eq:psl2-order}
			\#\PSL_2(\mathbf F_q)=\frac{q(q^2-1)}{\gcd(2,q-1)};
		\end{equation}
		see \cite[\S\S3.3.1--3.3.2, pp.~44--46]{Wilson09}.
		For fixed \(p\), the denominator is constant as \(q=p^r\) varies,
		and these orders strictly increase with \(r\). Thus the groups
		\(\PSL_2(\mathbf F_{p^r})\), \(r\geq4\), are pairwise nonisomorphic.
		Applying
		Lemma~\ref{lem:independent-simple-quotients} to \(\pi_1^{\et}(X)\) and to
		\(\pi_1^{\et}(X_{\Fbar})\), respectively, shows that the product homomorphism
		\[
		\overline\rho=(\overline\rho_r)_{r\geq4}:
		\pi_1^{\et}(X)\longrightarrow
		P:=\prod_{r\geq4}\PSL_2(\mathbf F_{p^r})
		\]
		satisfies
		\[
		\overline\rho\bigl(\pi_1^{\et}(X)\bigr)
		=
		\overline\rho\bigl(\pi_1^{\et}(X_{\Fbar})\bigr)
		=P.
		\]
		
		Compose \(\overline\rho\) with the canonical quotient
		\(G_K\twoheadrightarrow\pi_1^{\et}(X)\), and let \(L\) be the fixed field
		of its kernel. Then \(L/K\) is Galois and
		\(\Gal(L/K)\cong P\). Since the homomorphism factors through the
		\'{e}tale fundamental group of the projective curve \(X\), every finite
		subextension of \(L/K\) is unramified at every place.
		To rule out new constants, consider any finite Galois subextension
		of \(L/K\). Its Galois group is a finite quotient of \(P\), and
		\(\pi_1^{\et}(X_{\Fbar})\) still maps onto it because
		\(\overline\rho(\pi_1^{\et}(X_{\Fbar}))=P\).
		The discussion following \eqref{eq:fundamental-exact-sequence}
		therefore shows that this subextension has full constant field \(\Fp\).
		Every element of \(L\) lies in one of these finite Galois subextensions
		\cite[proof of Thm.~9.22.4]{Stacks}.
		Thus \(\Fp\) is algebraically closed in \(L\), proving the first assertion.
		
		For the second assertion, suppose to the contrary that such an extension
		\(L/K\) exists.
		From the factors \(\PSL_2(\mathbf F_q)\), we will construct infinitely
		many pairwise nonisomorphic continuous semisimple geometric
		representations of \(G_K\) over \(\Fbar\), all of the same dimension
		and everywhere unramified.
		We will use dimension \(4\) when \(p\) is odd and dimension \(2\)
		when \(p=2\).
		Since \(K\) has characteristic \(p'\ne p\), these representations,
		whose Artin conductors are zero, will contradict the
		cross-characteristic finiteness theorem
		\cite[Thm.~1.3, p.~2]{Luo26a}.
		
		After removing from \(S\) the finitely many values \(q\leq3\),
		the set \(S\) is still infinite.
		For each \(q\in S\), the group
		\(\PSL_2(\mathbf F_q)\) is a nonabelian simple quotient
		of \(\Gal(L/K)\).
		The order formula \eqref{eq:psl2-order} shows that their orders strictly increase
		with \(q\), so these groups are pairwise nonisomorphic.
		
		For each \(q\in S\), let
		\[
		\varphi_q:G_K\twoheadrightarrow\PSL_2(\mathbf F_q)
		\]
		be the homomorphism obtained by composing
		\(G_K\twoheadrightarrow\Gal(L/K)\) with the projection onto the
		\(q\)-th factor. Let \(L_q\) be the fixed field of
		\(\ker(\varphi_q)\). Then \(L_q/K\) is finite and unramified at every
		place. Since \(k_0\) is algebraically closed in \(L\), it is also
		algebraically closed in \(L_q\); hence \(L_q\) is regular over \(k_0\).
		
		Suppose first that \(p\) is odd. The natural two-dimensional
		representation of \(\SL_2(\mathbf F_q)\) does not descend to
		\(\PSL_2(\mathbf F_q)\), because \(-I_2\) acts nontrivially.
		Instead, let \(\SL_2(\mathbf F_q)\) act on \(\mathrm{M}_2(\Fbar)\)
		by conjugation,
		\[
		g\cdot B=gBg^{-1}.
		\]
		The center \(\{\pm I_2\}\) acts trivially, so this gives
		a four-dimensional representation
		\[
		a_q:\PSL_2(\mathbf F_q)\longrightarrow\GL_4(\Fbar).
		\]
		This representation is faithful: an element acting trivially by
		conjugation is represented by a matrix commuting with the full matrix
		algebra, hence by a scalar matrix, and is therefore trivial in \(\PSL_2(\mathbf F_q)\).
		
		To apply the finiteness theorem, let \(a_q^{\mathrm{ss}}\)
		be a semisimplification of \(a_q\).
		We show that it remains faithful.
		If it were not, its kernel would be a nontrivial normal subgroup
		of the simple group \(\PSL_2(\mathbf F_q)\), so \(a_q^{\mathrm{ss}}\) would be trivial.
		Every Jordan--H\"older factor of \(\mathrm{M}_2(\Fbar)\)
		would then be trivial.
		A basis adapted to a composition series would place \(a_q(\PSL_2(\mathbf F_q))\)
		inside an upper unitriangular group, which is solvable.
		This would make \(\PSL_2(\mathbf F_q)\) solvable, since \(a_q\) is faithful,
		contradicting the fact that \(\PSL_2(\mathbf F_q)\) is nonabelian simple.
		Thus \(a_q^{\mathrm{ss}}\) is faithful.
		
		Composing with \(\varphi_q\), we obtain
		\[
		\sigma_q:=a_q^{\mathrm{ss}}\circ\varphi_q:
		G_K\longrightarrow\GL_4(\Fbar).
		\]
		This representation is continuous because \(\varphi_q\) is
		a continuous homomorphism to a finite discrete group,
		and it is semisimple because \(\varphi_q\) is surjective.
		Since \(a_q^{\mathrm{ss}}\) is faithful,
		\(\ker(\sigma_q)=\ker(\varphi_q)\), so the fixed field
		of \(\ker(\sigma_q)\) is \(L_q\).
		We have already shown that \(L_q\) is regular over \(k_0\)
		and that \(L_q/K\) is everywhere unramified.
		Thus \(\sigma_q\) is geometric and everywhere unramified,
		with Artin conductor zero.
		
		The image of \(\sigma_q\) is isomorphic to
		\(\PSL_2(\mathbf F_q)\).
		These groups are pairwise nonisomorphic as \(q\) varies in \(S\),
		so the representations \(\sigma_q\) are pairwise nonisomorphic.
		We have therefore obtained infinitely many continuous semisimple
		geometric representations of \(G_K\), all of dimension \(4\)
		and with Artin conductor zero.
		This contradicts \cite[Thm.~1.3, p.~2]{Luo26a}.
		
		It remains to consider \(p=2\). In this case the center of
		\(\SL_2(\mathbf F_q)\) is trivial, so
		\[
		\PSL_2(\mathbf F_q)=\SL_2(\mathbf F_q).
		\]
		We can therefore use the natural two-dimensional representation
		\[
		b_q:\PSL_2(\mathbf F_q)\hookrightarrow\GL_2(\Fbar).
		\]
		It is faithful, and Lemma~\ref{lem:natural-irred} shows that
		it is absolutely irreducible. Thus no semisimplification is needed.
		
		Composing with \(\varphi_q\), we obtain
		\[
		\tau_q:=b_q\circ\varphi_q:
		G_K\longrightarrow\GL_2(\Fbar).
		\]
		These representations are continuous and absolutely irreducible,
		since \(\varphi_q\) is a continuous surjection onto the finite group
		\(\PSL_2(\mathbf F_q)\).
		The faithfulness of \(b_q\) gives
		\(\ker(\tau_q)=\ker(\varphi_q)\), so the fixed field
		of \(\ker(\tau_q)\) is again \(L_q\).
		As above, each \(\tau_q\) is geometric and everywhere unramified,
		with Artin conductor zero.
		Their images are isomorphic to the pairwise nonisomorphic groups
		\(\PSL_2(\mathbf F_q)\), so the representations \(\tau_q\),
		\(q\in S\), are pairwise nonisomorphic.
		This contradicts \cite[Thm.~1.3, p.~2]{Luo26a}, applied in
		dimension \(2\), and completes the proof.
	\end{proof}
	
	\begin{proof}[Proof of Corollary \ref{cor:nonliftable-family}]
		By Theorem~\ref{conditionaltheorem}, the image orders of the continuous semi\-simple geometric two-dimensional representations of
		\(\pi_1^{\et}(X)\) that admit char\-ac\-ter\-is\-tic-zero
		lifts are bounded by a constant \(B_X\).
		On the other hand, the order of \(\operatorname{Im}(\rho_r)\) tends to infinity with \(r\). Choosing \(r_0\geq 4\) such that this order exceeds \(B_X\) for every \(r\geq r_0\) proves the assertion.
	\end{proof}
	
	\section*{Declaration of generative AI use}
	Before seeking AI assistance, the authors had proved
	Theorem~\ref{conditionaltheorem} and, while searching for representations
	that admit no characteristic-zero lift, had conjectured the conclusion
	of Theorem~\ref{thm:main}. They subsequently used ChatGPT (Sol 5.6)
	to explore this conjecture. Its initial contribution was limited to
	a weaker result in a special case with \(p=2\). The authors recognized
	that this construction could be generalized and developed the statement
	and proof of Theorem~\ref{thm:main} in their present form.
	ChatGPT was also used to assist in checking the mathematical arguments throughout the manuscript. All mathematical arguments have been independently verified and
	reproduced by the authors. The authors bear sole responsibility for the correctness
	and completeness of the mathematical content and for the accuracy
	and completeness of the references.
	
	\bibliographystyle{amsalpha}
	\bibliography{etale-finiteness-counterexamples-revised}

@misc{Stacks,
	author       = {The {Stacks Project Authors}},
	title        = {The {Stacks} Project},
	year         = {2026},
	howpublished = {\url{https://stacks.math.columbia.edu}},
	url          = {https://stacks.math.columbia.edu}
}

@book{Serre77,
	author    = {J.-P. Serre},
	title     = {Linear Representations of Finite Groups},
	series    = {Graduate Texts in Mathematics},
	volume    = {42},
	publisher = {Springer},
	address   = {New York},
	year      = {1977},
	doi       = {10.1007/978-1-4684-9458-7},
	note      = {\href{https://doi.org/10.1007/978-1-4684-9458-7}{doi:10.1007/978-1-4684-9458-7}}
}

@book{RibesZalesskii10,
	author    = {L. Ribes and P. Zalesskii},
	title     = {Profinite Groups},
	edition   = {2nd},
	series    = {Ergebnisse der Mathematik und ihrer Grenzgebiete. 3. Folge},
	volume    = {40},
	publisher = {Springer},
	address   = {Berlin, Heidelberg},
	year      = {2010},
	doi       = {10.1007/978-3-642-01642-4},
	note      = {\href{https://doi.org/10.1007/978-3-642-01642-4}{doi:10.1007/978-3-642-01642-4}}
}

@book{Wilson09,
	author    = {R. A. Wilson},
	title     = {The Finite Simple Groups},
	series    = {Graduate Texts in Mathematics},
	volume    = {251},
	publisher = {Springer},
	address   = {London},
	year      = {2009},
	doi       = {10.1007/978-1-84800-988-2},
	note      = {\href{https://doi.org/10.1007/978-1-84800-988-2}{doi:10.1007/978-1-84800-988-2}}
}

@book{Mumford08,
	author    = {D. Mumford},
	title     = {Abelian Varieties},
	edition   = {2nd},
	series    = {Tata Institute of Fundamental Research Studies in Mathematics},
	volume    = {5},
	publisher = {Hindustan Book Agency},
	address   = {New Delhi},
	year      = {2008},
	note      = {Corrected reprint},
	isbn      = {978-81-85931-86-9}
}

@article{Dri74,
	author  = {V. G. Drinfeld},
	title   = {Elliptic modules},
	journal = {Math. USSR-Sb.},
	volume  = {23},
	number  = {4},
	year    = {1974},
	pages   = {561--592},
	doi     = {10.1070/SM1974v023n04ABEH001731},
	note    = {\href{https://doi.org/10.1070/SM1974v023n04ABEH001731}{doi:10.1070/SM1974v023n04ABEH001731}}
}

@article{Har74,
	author  = {G. Harder},
	title   = {Chevalley groups over function fields and automorphic forms},
	journal = {Ann. of Math. (2)},
	volume  = {100},
	number  = {2},
	year    = {1974},
	pages   = {249--306},
	doi     = {10.2307/1971073},
	url     = {https://doi.org/10.2307/1971073},
	note    = {\href{https://doi.org/10.2307/1971073}{doi:10.2307/1971073}}
}

@book{Laf97,
	author    = {L. Lafforgue},
	title     = {Chtoucas de {Drinfeld} et conjecture de {Ramanujan--Petersson}},
	series    = {Ast{\'e}risque},
	number    = {243},
	publisher = {Soci{\'e}t{\'e} Math{\'e}matique de France},
	address   = {Paris},
	year      = {1997},
	pages     = {1--329},
	doi       = {10.24033/ast.383},
	url       = {https://numdam.org/item/AST_1997__243__1_0/},
	note      = {\href{https://doi.org/10.24033/ast.383}{doi:10.24033/ast.383}}
}

@article{LRS93,
	author  = {G. Laumon and M. Rapoport and U. Stuhler},
	title   = {{$D$}-elliptic sheaves and the {Langlands} correspondence},
	journal = {Invent. Math.},
	volume  = {113},
	number  = {2},
	year    = {1993},
	pages   = {217--338},
	doi     = {10.1007/BF01244308},
	note    = {\href{https://doi.org/10.1007/BF01244308}{doi:10.1007/BF01244308}}
}

@article{Luo26a,
	author  = {Y. Luo},
	title   = {A finiteness theorem for mod {$p$} {Galois} representations over global function fields},
	journal = {Ramanujan J.},
	volume  = {70},
	number  = {3},
	year    = {2026},
	month   = jul,
	pages   = {Art.~50},
	doi     = {10.1007/s11139-026-01427-0},
	url     = {https://doi.org/10.1007/s11139-026-01427-0},
	note    = {\href{https://doi.org/10.1007/s11139-026-01427-0}{doi:10.1007/s11139-026-01427-0}}
}

@article{MoonTaguchi01,
	author  = {H. Moon and Y. Taguchi},
	title   = {Mod {$p$} {Galois} representations of solvable image},
	journal = {Proc. Amer. Math. Soc.},
	volume  = {129},
	number  = {9},
	year    = {2001},
	pages   = {2529--2534},
	doi     = {10.1090/S0002-9939-01-05894-4},
	url     = {https://doi.org/10.1090/S0002-9939-01-05894-4},
	note    = {\href{https://doi.org/10.1090/S0002-9939-01-05894-4}{doi:10.1090/S0002-9939-01-05894-4}}
}

@article{Taguchi17,
	author  = {Y. Taguchi},
	title   = {Moduli of {Galois} representations},
	journal = {Publ. Res. Inst. Math. Sci.},
	volume  = {53},
	number  = {4},
	year    = {2017},
	pages   = {457--516},
	doi     = {10.4171/PRIMS/53-4-1},
	url     = {https://doi.org/10.4171/PRIMS/53-4-1},
	note    = {\href{https://doi.org/10.4171/PRIMS/53-4-1}{doi:10.4171/PRIMS/53-4-1}}
}

@misc{Luo26b,
	author        = {Y. Luo},
	title         = {On the finiteness of geometric representations for varieties over finite fields},
	year          = {2026},
	eprint        = {2606.31341},
	archivePrefix = {arXiv},
	primaryClass  = {math.NT},
	doi           = {10.48550/arXiv.2606.31341},
	note          = {\href{https://arxiv.org/abs/2606.31341v2}{arXiv:2606.31341v2}}
}

@article{Pap09,
	author  = {M. Papikian},
	title   = {Modular varieties of {$D$}-elliptic sheaves and the {Weil--Deligne} bound},
	journal = {J. Reine Angew. Math.},
	volume  = {626},
	year    = {2009},
	pages   = {115--134},
	doi     = {10.1515/CRELLE.2009.004},
	eprint  = {0802.1568},
	archivePrefix = {arXiv},
	note    = {\href{https://doi.org/10.1515/CRELLE.2009.004}{doi:10.1515/CRELLE.2009.004}; \href{https://arxiv.org/abs/0802.1568}{arXiv:0802.1568}}
}

@book{Rei03,
	author    = {I. Reiner},
	title     = {Maximal Orders},
	series    = {London Mathematical Society Monographs. New Series},
	number    = {28},
	publisher = {Oxford University Press},
	address   = {Oxford},
	year      = {2003},
	note      = {Corrected reprint of the 1975 original; \href{https://doi.org/10.1093/oso/9780198526735.001.0001}{doi:10.1093/oso/9780198526735.001.0001}},
	isbn      = {978-0-19-852673-5},
	doi       = {10.1093/oso/9780198526735.001.0001}
}

@article{Abe18,
	author  = {Abe, T.},
	title   = {Langlands correspondence for isocrystals and the existence of crystalline companions for curves},
	journal = {J. Amer. Math. Soc.},
	volume  = {31},
	number  = {4},
	year    = {2018},
	pages   = {921--1057},
	doi     = {10.1090/jams/898},
	url     = {https://doi.org/10.1090/jams/898},
	note    = {\href{https://doi.org/10.1090/jams/898}{doi:10.1090/jams/898}},
}

@article{Kedlaya22,
	author  = {Kedlaya, K. S.},
	title   = {{\'E}tale and crystalline companions, {I}},
	journal = {\'Epijournal G\'eom. Alg\'ebrique},
	volume  = {6},
	year    = {2022},
	pages   = {Art.~20, 30~pp.},
	doi     = {10.46298/epiga.2022.6820},
	url     = {https://doi.org/10.46298/epiga.2022.6820},
	note    = {\href{https://doi.org/10.46298/epiga.2022.6820}{doi:10.46298/epiga.2022.6820}},
}

@InCollection{FontaineMazur97,
	author     = {Fontaine, J.-M. and Mazur, B.},
	booktitle  = {Elliptic curves, modular forms, \& {F}ermat's last theorem ({H}ong {K}ong, 1993)},
	editor     = {Coates, J. H. and Yau, S.-T.},
	edition    = {2nd},
	publisher  = {International Press},
	title      = {Geometric {G}alois representations},
	year       = {1997},
	pages      = {190--227},
}

@Article{MR2551763,
	author     = {Khare, C. and Wintenberger, J.-P.},
	journal    = {Invent. Math.},
	title      = {Serre's modularity conjecture. {I}},
	year       = {2009},
	issn       = {0020-9910,1432-1297},
	number     = {3},
	pages      = {485--504},
	volume     = {178},
	doi        = {10.1007/s00222-009-0205-7},
	fjournal   = {Inventiones Mathematicae},
	mrclass    = {11F80 (11F11 11F33 11R39)},
	mrnumber   = {2551763},
	mrreviewer = {Gabor\ Wiese},
	url        = {https://doi.org/10.1007/s00222-009-0205-7},
	note       = {\href{https://doi.org/10.1007/s00222-009-0205-7}{doi:10.1007/s00222-009-0205-7}},
}

@Article{MR2551764,
	author     = {Khare, C. and Wintenberger, J.-P.},
	journal    = {Invent. Math.},
	title      = {Serre's modularity conjecture. {II}},
	year       = {2009},
	issn       = {0020-9910,1432-1297},
	number     = {3},
	pages      = {505--586},
	volume     = {178},
	doi        = {10.1007/s00222-009-0206-6},
	fjournal   = {Inventiones Mathematicae},
	mrclass    = {11F80 (11F11 11F33 11R39)},
	mrnumber   = {2551764},
	mrreviewer = {Gabor\ Wiese},
	url        = {https://doi.org/10.1007/s00222-009-0206-6},
	note       = {\href{https://doi.org/10.1007/s00222-009-0206-6}{doi:10.1007/s00222-009-0206-6}},
}

@Article{Kisin09,
	author     = {Kisin, M.},
	journal    = {Invent. Math.},
	title      = {Modularity of {$2$}-adic {Barsotti--Tate} representations},
	year       = {2009},
	number     = {3},
	pages      = {587--634},
	volume     = {178},
	doi        = {10.1007/s00222-009-0207-5},
	url        = {https://doi.org/10.1007/s00222-009-0207-5},
	note       = {\href{https://doi.org/10.1007/s00222-009-0207-5}{doi:10.1007/s00222-009-0207-5}},
}

@Article{MR1751924,
	author     = {Khare, C.},
	journal    = {J. Ramanujan Math. Soc.},
	title      = {Conjectures on finiteness of mod {$p$} {G}alois representations},
	year       = {2000},
	issn       = {0970-1249,2320-3110},
	number     = {1},
	pages      = {23--42},
	volume     = {15},
	fjournal   = {Journal of the Ramanujan Mathematical Society},
	mrclass    = {11F80 (11R32)},
	mrnumber   = {1751924},
	mrreviewer = {Gebhard\ B\"ockle},
}

@Article{MR1782427,
	author     = {Moon, H.},
	journal    = {J. Number Theory},
	title      = {Finiteness results on certain mod {$p$} {G}alois representations},
	year       = {2000},
	issn       = {0022-314X,1096-1658},
	number     = {1},
	pages      = {156--165},
	volume     = {84},
	doi        = {10.1006/jnth.2000.2534},
	fjournal   = {Journal of Number Theory},
	mrclass    = {11F80 (11R32)},
	mrnumber   = {1782427},
	mrreviewer = {Gebhard\ B\"ockle},
	url        = {https://doi.org/10.1006/jnth.2000.2534},
	note       = {\href{https://doi.org/10.1006/jnth.2000.2534}{doi:10.1006/jnth.2000.2534}},
}

@article {MR2730374,
	AUTHOR = {Buzzard, K. and Diamond, F. and Jarvis, F.},
	TITLE = {On {S}erre's conjecture for mod {$\ell$} {G}alois
	representations over totally real fields},
	JOURNAL = {Duke Math. J.},
	FJOURNAL = {Duke Mathematical Journal},
	VOLUME = {155},
	YEAR = {2010},
	NUMBER = {1},
	PAGES = {105--161},
	ISSN = {0012-7094,1547-7398},
	MRCLASS = {11F80 (11F33 11F41)},
	MRNUMBER = {2730374},
	MRREVIEWER = {Michael\ M.\ Schein},
	DOI = {10.1215/00127094-2010-052},
	URL = {https://doi.org/10.1215/00127094-2010-052},
	NOTE = {\href{https://doi.org/10.1215/00127094-2010-052}{doi:10.1215/00127094-2010-052}},
}

@article {MR885783,
	AUTHOR = {Serre, J.-P.},
	TITLE = {Sur les repr\'esentations modulaires de degr\'e{} {$2$} de
	{$\operatorname{Gal}(\overline{\mathbf{Q}}/\mathbf{Q})$}},
	JOURNAL = {Duke Math. J.},
	FJOURNAL = {Duke Mathematical Journal},
	VOLUME = {54},
	YEAR = {1987},
	NUMBER = {1},
	PAGES = {179--230},
	ISSN = {0012-7094,1547-7398},
	MRCLASS = {11F11 (11G05 14G15 14G25 14K15)},
	MRNUMBER = {885783},
	MRREVIEWER = {M.\ A.\ Kenku},
	DOI = {10.1215/S0012-7094-87-05413-5},
	URL = {https://doi.org/10.1215/S0012-7094-87-05413-5},
	NOTE = {\href{https://doi.org/10.1215/S0012-7094-87-05413-5}{doi:10.1215/S0012-7094-87-05413-5}},
}

@article{deJong01,
	author  = {A. J. de Jong},
	title   = {A conjecture on arithmetic fundamental groups},
	journal = {Israel J. Math.},
	volume  = {121},
	year    = {2001},
	pages   = {61--84},
	doi     = {10.1007/BF02802496},
	note    = {\href{https://doi.org/10.1007/BF02802496}
	{doi:10.1007/BF02802496}}
}

@article{Gaitsgory07,
	author  = {D. Gaitsgory},
	title   = {On {de Jong}'s conjecture},
	journal = {Israel J. Math.},
	volume  = {157},
	year    = {2007},
	pages   = {155--191},
	doi     = {10.1007/s11856-006-0006-2},
	eprint  = {math/0402184},
	archivePrefix = {arXiv},
	note    = {\href{https://doi.org/10.1007/s11856-006-0006-2}
	{doi:10.1007/s11856-006-0006-2};
	\href{https://arxiv.org/abs/math/0402184}
	{arXiv:math/0402184}}
}

@incollection{FKV99,
	author    = {G. Frey and E. Kani and H. V{\"o}lklein},
	title     = {Curves with infinite {$K$}-rational geometric fundamental group},
	editor    = {H. V{\"o}lklein and J. G. Thompson and
	D. Harbater and P. M{\"u}ller},
	booktitle = {Aspects of {Galois} Theory ({Gainesville}, {FL}, 1996)},
	series    = {London Mathematical Society Lecture Note Series},
	volume    = {256},
	publisher = {Cambridge University Press},
	address   = {Cambridge},
	year      = {1999},
	pages     = {85--118}
}
	
\end{document}